\documentclass[12pt,twoside]{amsart}
\usepackage{geometry}
\geometry{a4paper,top=3cm,bottom=3cm,left=2.5cm,right=2.5cm}
\usepackage{amssymb,amsmath,amsthm, amscd, enumerate, mathrsfs}
\usepackage{graphicx, hhline}
\usepackage[all]{xy}
\usepackage[dvipdfmx]{hyperref}
\usepackage[shortlabels]{enumitem}
\setlist[enumerate]{leftmargin=56pt,labelsep=
8pt,itemsep=4pt,label=\upshape{(\thethm.\arabic*)}}
\usepackage{mathtools}
\mathtoolsset{showonlyrefs}
\usepackage{color}

\newcommand{\RomII}{\uppercase\expandafter{\romannumeral 2}}

\title[Variation of mixed Hodge structure]
{Variation of mixed Hodge structure and its applications}
\author{Osamu Fujino and Taro Fujisawa}
\date{2025/3/11, version 0.55}
\subjclass[2020]{Primary 32L20; Secondary 14D07, 14E30}
\keywords{Hodge bundles, 
vanishing theorems, strict support condition, 
torsion-freeness, injectivity theorem, variation of mixed 
Hodge structure, semipositivity theorem, minimal model program}
\address{Department of 
Mathematics, Graduate School of Science, 
Kyoto University, Kyoto 606-8502, Japan}
\email{fujino@math.kyoto-u.ac.jp}
\address{Department of Mathematics, School of Engineering, 
Tokyo Denki University, Tokyo, Japan}
\email{fujisawa@mail.dendai.ac.jp}
\dedicatory{Dedicated to the memory of Professor Sampei Usui}

\DeclareMathOperator{\Supp}{Supp}

\DeclareMathOperator{\Ass}{Ass}

\DeclareMathOperator{\Gr}{Gr}

\DeclareMathOperator{\image}{Image}
\DeclareMathOperator{\red}{red}
\DeclareMathOperator{\rank}{rank}
\DeclareMathOperator{\coker}{Coker}
\DeclareMathOperator{\ord}{ord}
\DeclareMathOperator{\id}{id}
\DeclareMathOperator{\kos}{Kos}
\DeclareMathOperator{\rec}{rec}
\DeclareMathOperator{\kernel}{Ker}
\DeclareMathOperator{\sym}{Sym}
\DeclareMathOperator{\sing}{Sing}
\DeclareMathOperator{\dlog}{dlog}

\newcommand{\gp}{^{\rm gp}}
\newcommand{\sm}{_{\rm sm}}

\newtheorem{thm}{Theorem}[section]
\newtheorem{lem}[thm]{Lemma}
\newtheorem{cor}[thm]{Corollary}

\theoremstyle{definition}
\newtheorem{defn}[thm]{Definition}
\newtheorem{rem}[thm]{Remark}
\newtheorem*{ack}{Acknowledgments}  
\newtheorem{step}{Step}
\newtheorem{say}[thm]{}
\makeatletter

\@addtoreset{equation}{section}
\makeatother
\begin{document}

\begin{abstract} 
We discuss variations of mixed Hodge 
structure arising from projective morphisms of complex analytic spaces. 
Then we treat generalizations of Koll\'ar's 
torsion-free theorem, vanishing theorem, and so on, 
for reducible complex analytic spaces as an application. 
The results will play a crucial role in the theory of minimal models 
for projective morphisms between complex analytic spaces. 
\end{abstract}

\maketitle 

\tableofcontents

\section{Introduction}\label{z-sec1}

This paper is a part of the first author's project 
to establish a minimal model theory for projective 
morphisms of complex analytic spaces. We have already 
known that the theory of mixed Hodge structures on cohomology 
with compact support is very useful in the theory of 
minimal models for higher-dimensional {\em{algebraic}} 
varieties (see \cite{fujino-fundamental}, 
\cite[Chapters 5 and 6]{fujino-foundations}, 
\cite{fujino-on-quasi}, and so on). Recently, the 
first author generalized the framework of the minimal 
model program for projective morphisms between 
complex analytic spaces (see \cite{fujino-minimal}, 
\cite{fujino-analytic-vanishing}, 
\cite{fujino-cone-contraction}, 
\cite{fujino-quasi-log-analytic}, and so on). 
From the Hodge theoretic point of view, the traditional 
framework of the minimal model theory for kawamata 
log terminal pairs is {\em{pure}}. For varieties and 
pairs whose singularities are worse than kawamata log 
terminal, it is natural to use the theory of {\em{mixed}} 
Hodge structures. Note that we cannot directly apply the 
Kawamata--Viehweg vanishing theorem or the original Fujita--Zucker--Kawamata 
semipositivity theorem to log canonical pairs 
and semi-log canonical pairs. Therefore, we 
want to obtain more general vanishing theorems, 
semipositivity theorems, and so on, in a suitable complex 
analytic setting by using the 
theory of mixed Hodge structures. 
In this paper, we will prove the 
following theorem, which is an analytic generalization of 
\cite[Theorems 7.1 and 7.3]{fujino-fujisawa}. 
Note that $f\colon (X, D)\to Y$ is assumed to be 
{\em{algebraic}} in \cite{fujino-fujisawa}. 
Our approach 
in this paper is slightly 
different from the one in \cite{fujino-fujisawa} (see 
Remark \ref{z-rem1.6} below). 
This paper is also a supplement to \cite{fujino-analytic-vanishing}. 
Although \cite{fujino-analytic-vanishing} depends on 
Saito's theory 
of mixed Hodge modules (see \cite{saito1}, 
\cite{saito2}, \cite{saito3}, 
\cite{saito4}, \cite{fujino-fujisawa-saito}, and 
\cite{saito5}), we do not use it here. 
We will use only variations of mixed Hodge structure. 

\begin{thm}[{Canonical extensions of Hodge 
bundles, see \cite[Theorems 7.1 
and 7.3]{fujino-fujisawa}}]\label{z-thm1.1}
Let $(X, D)$ be an analytic simple normal crossing pair 
such that $D$ is reduced and let $f\colon X\to Y$ be a 
proper surjective morphism onto a smooth complex variety $Y$. 
Assume that every stratum of $(X, D)$ is
dominant onto $Y$. 
Let $\Sigma$ be a normal crossing divisor 
on $Y$ such that 
every stratum of $(X, D)$ is smooth over $Y^*:=Y\setminus 
\Sigma$. We put $X^*:=f^{-1}(Y^*)$, $D^*:=D|_{X^*}$, 
and $d:=\dim X-\dim Y$. 
If we further assume that every stratum of $(X, D)$
is a K\"ahler manifold, 
then we have: 
\begin{itemize}
\item[\em{(i)}]
$R^k(f|_{X^*\setminus D^*})_!\mathbb R_{X^* \setminus D^*}$
underlies a graded polarizable 
variation of $\mathbb R$-mixed Hodge structure on $Y^*$ for 
every $k$. 
\end{itemize} 
We put 
\begin{equation*}
\mathcal V^k_{Y^*}:=R^k(f|_{X^*\setminus D^*})_!\mathbb R_{X^*\setminus 
D^*}\otimes \mathcal O_{Y^*}
\end{equation*} 
for every $k$. The Hodge filtration
and the weight filtration
on $\mathcal V^k_{Y^*}$
are denoted by $F$ and $L$ respectively. 
Moreover, the lower canonical extension
of $\mathcal{V}^k_{Y^*}$
is denoted by ${}^l\mathcal{V}^k_{Y^*}$.
The weight filtration $L$ on $\mathcal V^k_{Y^*}$ is extended
to ${}^l\mathcal{V}^k_{Y^*}$ by
$L_m({}^l\mathcal{V}^k_{Y^*})={}^lL_m(\mathcal{V}^k_{Y^*})$
for every $m$. 
Then we have the following: 
\begin{itemize}
\item[\em{(ii)}] 
There exists a unique finite decreasing filtration $F$
on ${}^l\mathcal{V}^k_{Y^*}$ such that
\begin{itemize}
\item[$\bullet$]
$F^p({}^l\mathcal{V}^k_{Y^*})|_{Y^*} \simeq F^p(\mathcal{V}^k_{Y^*})$, and
\item[$\bullet$]
$\Gr_F^p\Gr_m^L({}^l\mathcal{V}^k_{Y^*})$
is a locally free $\mathcal{O}_Y$-module of finite rank
\end{itemize}
for every $k,m,p$. 
\item[\em{(iii)}]
$R^{d-i}f_*\mathcal O_X(-D)$
is isomorphic to 
\begin{equation*}
\Gr^0_F ({}^l\mathcal V ^{d-i}_{Y^*})
=F^0({}^l\mathcal V ^{d-i}_{Y^*}) 
/F^1({}^l\mathcal V ^{d-i}_{Y^*}) 
\end{equation*} 
for every $i$.
In particular, $R^{d-i}f_*\mathcal O_X(-D)$ is locally free 
for every $i$. 
\item[\em{(iv)}]
$R^if_*\omega_{X/Y}(D)$
is isomorphic to 
\begin{equation*}
\left(\Gr^0_F({}^l\mathcal{V}^{d-i}_{Y^*})\right)^* 
=\mathcal H om_{\mathcal O_Y}(\Gr^0_F 
({}^l\mathcal V^{d-i}_{Y^*}), \mathcal O_Y) 
\end{equation*} 
for every $i$. 
In particular, $R^if_*\omega_{X/Y}(D)$ is locally free 
for every $i$. 
\end{itemize}
\end{thm}
For the precise definition of the upper and the lower canonical 
extensions in Theorem \ref{z-thm1.1},
see \ref{def-added} below.

To the best knowledge of the authors, 
Theorem \ref{z-thm1.1} is new even when $X$ is 
irreducible. For various geometric applications, 
it seems to be indispensable to treat the case where $X$ is reducible. 
When $X$ and $Y$ are algebraic and $f\colon X\to Y$ is projective, 
Theorem \ref{z-thm1.1} (see \cite[Theorems 7.1 and 7.3]{fujino-fujisawa}) 
is one of the main ingredients of \cite{fujino-moduli}, 
where we have to allow $X$ reducible. 

\begin{rem}
\label{z-rem1.2}
We do not need the relative monodromy weight filtration for applications 
in the theory of minimal models (see, for example, 
\cite{fujino-cone-contraction}). 
We are mainly interested in Hodge bundles and their extensions 
(see also \cite{fujino-fujisawa3}). 
However, we can prove the existence of the relative 
monodromy weight filtration in Theorem \ref{z-thm1.1}. 
In fact,
the variations of $\mathbb{R}$-mixed Hodge structure
in (i) of the theorem above are admissible.
This can be checked by the same argument as in \cite{fujino-fujisawa}. 
For the details, see Remark \ref{rem:2} below.
\end{rem}

By Theorem 
\ref{z-thm1.1}, we can use a generalization of the Fujita--Zucker--Kawamata 
semipositivity theorem in the complex 
analytic setting. 

\begin{thm}[Semipositivity]\label{z-thm1.3} 
In Theorem \ref{z-thm1.1}, 
we further assume that 
every local monodromy on the local system 
$R^{d-i}(f|_{X^*\setminus D^*})_!\mathbb R_{X^*\setminus D^*}$
around $\Sigma$ is unipotent. 
Let $\varphi\colon V\to X$ be any morphism 
from a projective variety $V$. 
Then $\varphi^*R^if_*\omega_{X/Y}(D)$ is a nef 
locally free sheaf on $V$.  
\end{thm}

In order to prove Theorem \ref{z-thm1.1}, 
we will establish: 

\begin{thm}[Weight spectral sequence]\label{z-thm1.4}
Let $(X, D)$ be an analytic simple normal crossing pair 
such that $D$ is reduced and 
let $f\colon X\to Y$ be a proper 
morphism between complex analytic spaces. 
We assume that $Y$ is a smooth complex variety and that 
there exists a normal crossing divisor $\Sigma$ on $Y$ such that 
every stratum of 
$(X, D)$ is dominant onto $Y$, and smooth over $Y\setminus \Sigma$. 
If we assume that every stratum of $(X, D)$
is a K\"ahler manifold in addition, 
then we have a spectral sequence: 
\begin{equation*}
E^{p, q}_1=\bigoplus_S
R^qf_*\mathcal O_S\Rightarrow R^{p+q}f_*\mathcal O_X(-D), 
\end{equation*} 
where 
$S$ runs through all $(\dim X-p)$-dimensional 
strata of $(X, D)$, 
such that it degenerates at $E_2$ and 
its $E_1$-differential $d_1$ splits.
Moreover,
$R^if_*\mathcal O_X(-D)$
is locally free of finite rank for every $i$.
\end{thm}

By combining Theorem \ref{z-thm1.4} with 
Takegoshi's results (see \cite{takegoshi}), we can prove: 

\begin{thm}[Torsion-freeness and vanishing theorem]\label{z-thm1.5}
Let $(X, D)$ be an analytic simple normal crossing pair 
such that $D$ is reduced and 
let $f\colon X\to Y$ be a projective 
morphism between complex analytic spaces. 
We assume that $Y$ is a complex variety and that every stratum of 
$(X, D)$ is dominant onto $Y$. 
Then we have the following properties. 
\begin{itemize}
\item[\em{(i)}] $($Torsion-freeness$)$.~$R^qf_*\omega_X(D)$ 
is a torsion-free sheaf for every $q$. 
\item[\em{(ii)}] $($Vanishing theorem$)$.~Let $\pi\colon Y\to Z$ be 
a projective morphism between complex analytic 
spaces and let $\mathcal A$ be a $\pi$-ample 
line bundle on $Y$. 
Then 
\begin{equation*}
R^p\pi_*\left(\mathcal A\otimes R^qf_*\omega_X(D)\right)=0
\end{equation*} 
holds  
for every $p>0$ and every $q$. 
\end{itemize}
\end{thm}

Of course, Theorem \ref{z-thm1.5} is a generalization of 
Koll\'ar's torsion-freeness and vanishing theorem (see 
\cite{kollar1}) for reducible 
complex analytic spaces. 
We make a remark on the relationship between 
\cite{fujino-fujisawa} and this paper. 

\begin{rem}\label{z-rem1.6} 
In \cite{fujino-fujisawa}, 
we have already treated Theorems \ref{z-thm1.1} and 
\ref{z-thm1.5} when $X$ and $Y$ are algebraic 
and $f\colon X\to Y$ is projective. 
Roughly speaking, in \cite[\S 6]{fujino-fujisawa}, 
we first establish Theorem \ref{z-thm1.5} 
when $X$ is quasi-projective and $f\colon X\to Y$ is algebraic. 
Then, by using it, we prove Theorem \ref{z-thm1.1} 
under the assumption that $X$ and $Y$ are algebraic 
and $f\colon X\to Y$ is projective 
in \cite[\S 7]{fujino-fujisawa}. 
When $X$ is quasi-projective, we can use the theory of 
mixed Hodge structures. 
Hence we can obtain desired vanishing theorems and 
torsion-freeness without using the theory of variations of 
mixed Hodge structure 
(for the details, see \cite[Chapter 5]{fujino-foundations}). 
In this paper, we will directly prove Theorems \ref{z-thm1.1} 
and \ref{z-thm1.4} with the aid of some results established 
for K\"ahler manifolds 
(see \cite{takegoshi}). 
Then, we will prove Theorem \ref{z-thm1.5} as an application. 
Theorem \ref{z-thm1.4} is new even when $X$ and $Y$ are algebraic 
and $f\colon X\to Y$ is projective. 
\end{rem}

By using Theorem \ref{z-thm1.5}, we have: 

\begin{thm}[{see \cite[Theorem 3.1]{fujino-analytic-vanishing}}]
\label{z-thm1.7}
Let $(X, D)$ be an analytic simple normal crossing pair 
such that $D$ is reduced and 
let $f\colon X\to Y$ be a projective 
morphism between complex analytic spaces. 
Then we have the following properties. 
\begin{itemize}
\item[\em{(i)}] $($Strict support condition$)$.~Every 
associated subvariety of $R^qf_*\omega_X(D)$ 
is the $f$-image of some stratum of $(X, D)$ for every $q$. 
\item[\em{(ii)}] $($Vanishing theorem$)$.~Let $\pi\colon Y\to Z$ be 
a projective morphism between complex analytic 
spaces and let $\mathcal A$ be a $\pi$-ample 
line bundle on $Y$. 
Then 
\begin{equation*}
R^p\pi_*\left(\mathcal A\otimes R^qf_*\omega_X(D)\right)=0
\end{equation*} 
holds  
for every $p>0$ and every $q$. 
\item[\em{(iii)}] $($Injectivity theorem$)$.~Let $\mathcal L$ be an $f$-semiample 
line bundle on $X$. Let $s$ be a nonzero element of $H^0(X, 
\mathcal L^{\otimes k})$ for some nonnegative integer $k$ such that 
the zero locus of $s$ does not contain any strata of $(X, D)$. 
Then, for every $q$, 
the map 
\begin{equation*}
\times s\colon R^qf_*\left(\omega_X(D)
\otimes \mathcal L^{\otimes l}\right)
\to R^qf_*\left(\omega_X(D)\otimes \mathcal L^{\otimes k+l}\right)
\end{equation*} 
induced by $\otimes s$ is injective for every positive integer $l$. 
\end{itemize}
\end{thm}

Note that Theorem \ref{z-thm1.7} was first 
obtained in \cite[Theorem 3.1]{fujino-analytic-vanishing} 
under a weaker assumption that $f\colon X\to Y$ is 
K\"ahler by using Saito's theory of mixed Hodge modules. 
Theorems \ref{z-thm1.8} and \ref{z-thm1.9} are the 
main results of \cite{fujino-analytic-vanishing}. 
Although they may look artificial and technical, they are very 
useful and indispensable for the study of 
varieties and pairs whose singularities are 
worse than kawamata log terminal (see 
\cite{ambro}, \cite[Chapter 6]{fujino-foundations}, 
\cite{fujino-hyperbolic}, \cite{fujino-on-quasi}, 
\cite{fujino-cone-contraction}, \cite{fujino-quasi-log-analytic}, and so on).  
In \cite{fujino-analytic-vanishing}, we showed that 
Theorems \ref{z-thm1.8} and \ref{z-thm1.9} follow 
from Theorem \ref{z-thm1.7} (i) and (ii). 
Note that Theorem \ref{z-thm1.7} (iii) is an easy 
consequence of Theorem \ref{z-thm1.7} (i) and (ii). 
Hence this paper gives an approach to Theorems 
\ref{z-thm1.8} and \ref{z-thm1.9} without using 
Saito's theory of mixed Hodge modules. 

\begin{thm}[{see \cite[Theorem 1.1]{fujino-analytic-vanishing}}]\label{z-thm1.8}
Let $(X, \Delta)$ be an analytic simple 
normal crossing pair such that $\Delta$ is a boundary 
$\mathbb R$-divisor on $X$. 
Let $f\colon X\to Y$ be a projective morphism 
to a complex analytic space $Y$ and let $\mathcal L$ 
be a line bundle on $X$. 
Let $q$ be an arbitrary nonnegative integer. 
Then we have the following properties. 
\begin{itemize}
\item[\em{(i)}] $($Strict support condition$)$.~If 
$\mathcal L-(\omega_X+\Delta)$ is $f$-semiample,  
then every 
associated subvariety of $R^qf_*\mathcal L$ is the $f$-image 
of some stratum of $(X, \Delta)$. 
\item[\em{(ii)}] $($Vanishing theorem$)$.~If 
$\mathcal L-(\omega_X+\Delta)\sim _{\mathbb R} f^*\mathcal H
$ holds 
for some $\pi$-ample 
$\mathbb R$-line bundle $\mathcal H$ on $Y$, where 
$\pi\colon Y\to Z$ is a 
projective morphism to a complex analytic space 
$Z$, then we have 
$
R^p\pi_*R^qf_*\mathcal L=0
$
for every $p>0$. 
\end{itemize} 
\end{thm}

\begin{thm}[{Vanishing theorem of Reid--Fukuda type, 
see \cite[Theorem 1.2]{fujino-analytic-vanishing}}]\label{z-thm1.9}
Let $(X, \Delta)$ be an analytic simple 
normal crossing pair such that $\Delta$ is a boundary 
$\mathbb R$-divisor on $X$. 
Let $f\colon X\to Y$ and $\pi\colon Y\to Z$ be projective morphisms 
between complex analytic spaces and let $\mathcal L$ 
be a line bundle on $X$. 
If $\mathcal L-(\omega_X+\Delta)\sim _{\mathbb R} f^*\mathcal H$ 
holds such that $\mathcal H$ is an $\mathbb R$-line bundle, 
which is nef and 
log big over $Z$ with respect to 
$f\colon (X, \Delta)\to Y$, on $Y$, then  
$R^p\pi_*R^qf_*\mathcal L=0$ holds for every $p>0$ and every $q$. 
\end{thm}

In this paper, we do not prove Theorems \ref{z-thm1.8} and 
\ref{z-thm1.9}. 
For the details of Theorems \ref{z-thm1.8} and \ref{z-thm1.9}, 
see \cite{fujino-analytic-vanishing}. 
We note that we can find a completely different approach to Theorems 
\ref{z-thm1.8} and \ref{z-thm1.9} in \cite{murayama}. 
Although the motivation of the first author 
is obviously  the minimal model theory for projective 
morphisms between complex analytic spaces, 
we do not treat the minimal model program in this paper. 
We recommend that the interested reader looks at 
\cite{fujino-minimal}, \cite{fujino-cone-contraction}, 
\cite{fujino-quasi-log-analytic}, and so on. 
Theorems \ref{z-thm1.8} and \ref{z-thm1.9} have already 
played a crucial role in \cite{fujino-cone-contraction} and 
\cite{fujino-quasi-log-analytic}, where 
we established the fundamental theorems 
of the theory of minimal models for 
projective morphisms between complex analytic spaces. 
Anyway, by this paper, \cite{fujino-cone-contraction} 
and \cite{fujino-quasi-log-analytic} become free from 
Saito's theory of mixed Hodge modules. 
The relationship between \cite{fujino-analytic-vanishing} 
and this paper is as follows. 

\begin{rem}\label{z-rem1.10} 
In \cite[Corollary 1 and 4.7.~Remark]{fujino-fujisawa-saito} (see 
\cite[Theorem 2.6]{fujino-analytic-vanishing}), we constructed 
a weight spectral sequence of 
mixed Hodge modules. 
It is much more general than Theorem \ref{z-thm1.4} in some 
sense. 
Roughly speaking, 
\cite[Theorem 2.6]{fujino-analytic-vanishing} 
treats a dual statement of Theorem \ref{z-thm1.4} 
in a more general setting using Saito's theory of mixed Hodge modules. 
By combining it with Takegoshi's 
results (see \cite{takegoshi}), 
we proved Theorems \ref{z-thm1.7}, \ref{z-thm1.8}, 
\ref{z-thm1.9}, and so on, in \cite{fujino-analytic-vanishing}. 
From the Hodge theoretic viewpoint, one of the 
main ingredients of this paper is Steenbrink's result 
obtained in \cite{steenbrink1} and \cite{steenbrink2}. 
In our approach in this paper, 
Theorem \ref{z-thm1.4} is natural and sufficient for our purposes. 
\end{rem}

We look at the organization of this paper. 
In Section \ref{z-sec2}, we will briefly explain basic 
definitions and results necessary for this paper. 
In Subsection \ref{z-subsec2.1}, we will explain some useful lemmas 
on analytic simple normal crossing pairs. In 
Subsection \ref{z-subsec2.2}, we will briefly review 
Koll\'ar's package in the complex analytic setting. 
In Section \ref{z-sec3}, we will 
present abstract arguments
which help us to obtain a variation of mixed Hodge structure
whose Hodge filtration can be extended
to its canonical extension along a simple normal crossing divisor.
Section \ref{c-sec3} is the main part of this paper, 
where we will prove Theorems \ref{z-thm1.1} and 
\ref{z-thm1.4}. 
We will also see that a generalization of the 
Fujita--Zucker--Kawamata semipositivity theorem 
holds in the complex analytic setting (see Theorem \ref{z-thm1.3}). 
In Section \ref{z-sec5}, we will prove Theorem 
\ref{z-thm1.5}. In Section \ref{z-sec6}, 
we will prove Theorem \ref{z-thm1.7}. 
Section \ref{c-sec6} is a supplementary section, where 
we will explain a new construction of the rational 
structure for the cohomological $\mathbb Q$-mixed 
Hodge complex in \cite{steenbrink2}. 
We hope that it will help the reader to understand 
\cite{steenbrink1} and \cite{steenbrink2}. 

In \cite{fujino-fujisawa3}, 
we will use the same approach to variations of mixed 
Hodge structure as in Sections \ref{z-sec3}, 
\ref{c-sec3}, and \ref{c-sec6}. 

\begin{ack}\label{z-ack}
The authors thank Yuta Kusakabe very much for answering their questions.
The first author was partially 
supported by JSPS KAKENHI Grant Numbers 
JP19H01787, JP20H00111, JP21H00974, JP21H04994. 
The second author was partially
supported by JSPS KAKENHI Grant Number 
JP20K03542. 
The authors would like to thank the referee very much 
for useful comments. 
\end{ack}

In this paper, every complex analytic space is assumed to be 
{\em{Hausdorff}} and {\em{second-countable}}. 
Note that an irreducible and reduced 
complex analytic space is called a {\em{complex variety}}.
We will freely use the basic results on complex analytic 
geometry in \cite{banica} and \cite{fischer}. 

\section{Preliminaries}\label{z-sec2}

In this section, we will collect some basic definitions. 
Let us start with the definition of {\em{analytic 
simple normal crossing pairs}}. 

\begin{defn}[Analytic simple normal crossing pairs]\label{z-def2.1}
Let $X$ be a simple normal crossing divisor 
on a smooth complex analytic space $M$ and 
let $B$ be an $\mathbb R$-divisor on $M$ such that 
the support of $B+X$ is a simple normal crossing divisor on $M$ and 
that $B$ and $X$ have no common irreducible components. 
Then we put $D:=B|_X$ and 
consider the pair $(X, D)$. 
We call $(X, D)$ an {\em{analytic globally embedded simple 
normal crossing pair}} and $M$ the {\em{ambient space}} 
of $(X, D)$. 
If the pair $(X, D)$ is locally isomorphic to an analytic 
globally embedded 
simple normal crossing pair at any point of $X$ and the irreducible 
components of $X$ and $D$ are all smooth, 
then $(X, D)$ is called an {\em{analytic simple normal crossing 
pair}}. 

When $(X, D)$ is an analytic simple normal crossing 
pair, $X$ has an invertible 
dualizing sheaf $\omega_X$. 
We usually use the symbol $K_X$ as a formal 
divisor class with an isomorphism 
$\mathcal O_X(K_X)\simeq \omega_X$ if there is 
no danger of confusion. 
We note that we cannot always define $K_X$ globally 
with $\mathcal O_X(K_X)\simeq \omega_X$. 
In general, it only exists locally on $X$. 
\end{defn}

The notion of log canonical centers is not so easy to grasp. 
In this paper, the following basic properties, 
which follow easily from the definition, 
are sufficient to understand for our purposes. 

\begin{lem}\label{z-lem2.2} 
Let $V$ be a smooth complex variety. 
\begin{itemize}
\item[{\em{(i)}}] Let $F$ be 
a simple normal crossing divisor on $V$. 
Let $F=\sum _{i\in I} F_i$ be the irreducible 
decomposition. Then $C$ is a log canonical 
center of $(V, F)$ if and only if $C$ is an irreducible 
component of $F_{i_1}\cap \cdots \cap F_{i_k}$ 
for some nonempty subset $\{i_1, \ldots, i_k\}$ of $I$. 
\item[{\em{(ii)}}] Let $G$ be 
an $\mathbb R$-divisor on $V$ such that $\Supp G$ 
is a simple normal crossing divisor. We put 
$G=\sum _i a_i G_i$, where $a_i\in \mathbb R$ 
and $G_i$ is a prime divisor on $V$ for every 
$i$ such that $G_i\ne G_j$ for $i\ne j$. 
Then $C$ is a log canonical center of 
$(V, G)$ if and only if $C$ is a log 
canonical center of $(V, \sum _{a_i=1} G_i)$. 
\end{itemize}
\end{lem}

For the exact definition and basic 
properties of log canonical centers, 
see, for example, \cite[Section 2.3]{fujino-foundations}, 
\cite[Section 2.1]{fujino-cone-contraction}, and so on. 
The notion of {\em{strata}} plays a crucial role in this paper. 

\begin{defn}[Strata]\label{z-def2.3}
Let $(X, D)$ be an analytic simple normal crossing pair 
as in Definition \ref{z-def2.1}. 
Let $\nu\colon X^\nu\to X$ be the normalization. 
We put 
\begin{equation*}
K_{X^\nu}+\Theta=\nu^*(K_X+D). 
\end{equation*} 
This means that $\Theta$ is the union of $\nu^{-1}_*D$ and the 
inverse image of the singular locus of $X$. 
We note that $X^\nu$ is smooth and the support of $\Theta$ 
is a simple normal crossing divisor on $X^\nu$. 
If $W$ is an irreducible component of $X$ 
or the $\nu$-image 
of some log canonical center of $(X^\nu, \Theta)$, 
then $W$ is called a {\em{stratum}} of $(X, D)$. 
\end{defn}

We make an important remark. 

\begin{rem}\label{z-rem2.4}
In this paper, $D$ is always assumed to be reduced. 
Hence, $\Theta$ in Definition \ref{z-def2.3} is a 
reduced simple normal crossing divisor on $X^\nu$. 
We do not need $\mathbb Q$-divisors nor 
$\mathbb R$-divisors in this paper. 
This means that it is sufficient 
to treat the case where $B$ in Definition \ref{z-def2.1} 
is a $\mathbb Z$-divisor. 
\end{rem}

We recall Siu's theorem on complex analytic sheaves, 
which is a special case of \cite[Theorem 4]{siu}. 
We need it for Theorem \ref{z-thm1.7} (i) and 
Theorem \ref{z-thm1.8} (i). 

\begin{thm}\label{z-thm2.5} 
Let $\mathcal F$ be a coherent sheaf on a complex 
analytic space $X$. 
Then there exists a locally finite family $\{Y_i\}_{i\in I}$ 
of complex analytic subvarieties of $X$ such that 
\begin{equation*}
\Ass _{\mathcal O_{X,x}}(\mathcal F_x)=\{\mathfrak{p}_{x, 1}, 
\ldots, \mathfrak{p}_{x, r(x)}\}
\end{equation*}
holds for every point $x\in X$, where 
$\mathfrak{p}_{x, 1}, 
\ldots, \mathfrak{p}_{x, r(x)}$ are the prime ideals 
of $\mathcal O_{X, x}$ associated to the irreducible components
of the germs $Y_{i, x}$ of $Y_i$ at $x$ with $x\in Y_i$. 
We note that each $Y_i$ is called an {\em{associated subvariety}} 
of $\mathcal F$. 
\end{thm}

\begin{defn}[Relatively nef, ample, and big line bundles]\label{z-def2.6}
Let $f\colon X\to Y$ be a projective 
morphism of complex analytic spaces and let 
$\mathcal L$ be a line bundle on $X$. 
Then we say that 
\begin{itemize}
\item $\mathcal L$ is {\em{$f$-nef}} if $\mathcal L\cdot 
C\geq 0$ holds for every curve $C$ on $X$ such that 
$f(C)$ is a point, and 
\item $\mathcal L$ is {\em{$f$-ample}} if 
$\mathcal L|_{f^{-1}(y)}$ is ample 
in the usual sense for every $y\in Y$. 
\end{itemize} 
We further assume that $f\colon X\to Y$ is 
a projective surjective morphism of complex varieties. 
Then we say that 
\begin{itemize}
\item $\mathcal L$ is {\em{$f$-big}} 
if there exists some positive real number 
$c$ such that $\rank f_*\mathcal L^{\otimes m}> c\cdot m^d$ holds 
for $m \gg 0$, where $d=\dim X-\dim Y$. 
\end{itemize}
\end{defn}

We need the notion of {\em{nef locally free sheaves}} in 
Theorem \ref{z-thm1.3}. 

\begin{defn}[Nef locally free sheaves]\label{z-def2.7} 
Let $\mathcal E$ be a locally free sheaf of finite rank on a projective 
variety $V$. 
If $\mathcal O_{\mathbb P_V(\mathcal E)}(1)$ is nef, 
that is, $\mathcal O_{\mathbb P_V(\mathcal E)}(1)\cdot 
C\geq 0$ holds for every curve $C$ on $\mathbb P_V(\mathcal E)$, 
then $\mathcal E$ is called a {\em{nef}} 
locally free sheaf on $V$. 
\end{defn}

A nef locally free sheaf is sometimes called a 
{\em{semipositive vector bundle}} or a {\em{semipositive 
locally free sheaf}} in the literature. 

\subsection{Lemmas on analytic simple normal crossing pairs}
\label{z-subsec2.1}

In this subsection, we will collect some useful lemmas 
on analytic simple normal crossing pairs. 
We will repeatedly use these lemmas in subsequent sections. 

\begin{lem}[{see \cite[Lemmas 2.13 
and 2.15]{fujino-analytic-vanishing}}]\label{z-lem2.8}
Let $(X, D)$ and $(X', D')$ be simple normal 
crossing pairs such that $D$ and $D'$ are reduced. 
Let $g\colon X'\to X$ be a projective bimeromorphic 
morphism. 
Assume that there exists a Zariski open subset $U$ of 
$X$ such that $g\colon U':=g^{-1}(U)\to U$ is an isomorphism 
and that $U$ 
{\em{(}}resp.~$U'${\em{)}} intersects every stratum of 
$(X, D)$ {\em{(}}resp.~$(X', D')${\em{)}}. 
Then 
$
R^ig_*\mathcal O_{X'}=0 
$ and 
$
R^ig_*\mathcal O_{X'}(K_{X'}+D')=0
$
for every $i>0$, and 
$
g_*\mathcal O_{X'}\simeq \mathcal O_X 
$ and 
$
g_*\mathcal O_{X'}(K_{X'}+D')\simeq \mathcal O_X(K_X+D)
$ hold. 
\end{lem}

\begin{proof}
By \cite[Lemma 2.15]{fujino-analytic-vanishing}, 
we have $R^ig_*\mathcal O_{X'}=0$ for every 
$i>0$ and $g_*\mathcal O_{X'}\simeq \mathcal O_X$. 
Since $D$ and $D'$ are reduced, 
we can easily check that 
\begin{equation}\label{z-eq2.1}
K_{X'}+D'=g^*(K_X+D)+E
\end{equation} 
holds for some effective $g$-exceptional 
Cartier divisor $E$ on $X'$ and that $D'=g^{-1}_*D$ holds. 
By \eqref{z-eq2.1}, we have 
$g_*\mathcal O_{X'}(K_{X'}+D')\simeq 
\mathcal O_X(K_X+D)$. 
By \cite[Lemma 2.13]{fujino-analytic-vanishing}, 
we obtain $R^ig_*\mathcal O_{X'}(K_{X'}+D')=0$ for 
every $i>0$. 
We finish the proof. 
\end{proof}

\begin{lem}[{see \cite[Lemma 5.1]{fujino-analytic-vanishing}}]\label{z-lem2.9}
Let $(X, D)$ be an analytic 
simple normal crossing pair such 
that $D$ is reduced and 
let $f\colon X\to Y$ be a projective 
morphism between complex analytic spaces. 
Let $L$ be a Cartier divisor on $X$. 
We take an arbitrary point $P\in Y$. 
Then, after shrinking $Y$ around $P$ suitably, 
we can construct the following commutative 
diagram: 
\begin{equation*}
\xymatrix{
Z\ar[d]_-p\ar@{^{(}->}[r]^-\iota& M\ar[dd]^-q\\ 
X\ar[d]_f \\ 
Y\ar@{^{(}->}[r]_-{\iota_Y}& \Delta^m
}
\end{equation*}
such that 
\begin{itemize}
\item[\em{(i)}] $\iota_Y\colon Y\hookrightarrow \Delta^m$ is a 
closed embedding into a polydisc $\Delta^m$ with $\iota_Y(P)=0\in 
\Delta^m$, 
\item[\em{(ii)}] $(Z, D_Z)$ is an analytic globally embedded simple 
normal crossing pair such that $D_Z$ is reduced, 
\item[\em{(iii)}] $M$ is the ambient space of $(Z, D_Z)$ and 
is projective over $\Delta^m$, 
\item[\em{(iv)}] there exists a Cartier divisor $L_Z$ on $Z$ 
satisfying 
\begin{equation*}
L_Z-(K_Z+D_Z)=p^*(L-(K_X+D)), 
\end{equation*} 
$p_*\mathcal O_Z(L_Z)\simeq \mathcal O_X(L)$, and 
$R^ip_*\mathcal O_Z(L_Z)=0$ for 
every $i>0$, 
\item[\em{(v)}] $p(W)$ is a stratum of $(X, D)$ for every 
stratum $W$ of $(Z, D_Z)$, 
\item[\em{(vi)}] there exists a Zariski open subset $U$ of $X$, which 
intersects every stratum of $X$, such that 
$p$ is an isomorphism over $U$,  
\item[\em{(vii)}] $p$ maps every stratum of $Z$ 
bimeromorphically onto some stratum of $X$, and 
\item[\em{(viii)}] for any stratum $S$ of $(X, D)$, there 
exists a stratum $W$ of $(Z, D_Z)$ such that $S=p(W)$.   
\end{itemize} 
In particular, we have: 
\begin{itemize}
\item[{\em{(ix)}}] $p_*\mathcal O_Z(K_Z+D_Z)\simeq 
\mathcal O_X(K_X+D)$ and $R^ip_*\mathcal O_Z(K_Z+D_Z)=0$ 
holds for every $i>0$. 
\end{itemize}
\end{lem}

\begin{proof}
The proof of \cite[Lemma 5.1]{fujino-analytic-vanishing}, 
where we allow $D$ to be a boundary $\mathbb R$-divisor, 
works without any modifications. 
Thus we have the desired commutative diagram satisfying 
(i)--(viii). When $L=K_X+D$, 
we have $L_Z=K_Z+D_Z$ by (iv). Hence 
we obtain (ix). 
\end{proof}

\begin{lem}
[{see \cite[Lemma 2.17]{fujino-analytic-vanishing}}]\label{z-lem2.10}
Let $(X, D)$ be an analytic globally embedded simple 
normal crossing pair such that $D$ is reduced 
and let $M$ be the ambient space 
of $(X, D)$. 
Let $C$ be a stratum of $(X, D)$, which 
is not an irreducible component of $X$. 
Let $\sigma\colon M'\to M$ be the blow-up along $C$ and let 
$X'$ denote the reduced structure of the total transform of 
$X$ on $M'$. 
We put 
\begin{equation*}
K_{X'}+D':=g^*(K_X+D), 
\end{equation*} 
where $g:=\sigma|_{X'}$. Then we have the following properties:
\begin{itemize}
\item[\em{(i)}] $(X', D')$ is an analytic globally embedded simple 
normal crossing pair such that $D'$ is reduced, 
\item[\em{(ii)}] $M'$ is the ambient space of $(X', D')$, 
\item[\em{(iii)}] $g_*\mathcal O_{X'}\simeq \mathcal O_X$ holds 
and $R^ig_*\mathcal O_{X'}=0$ for every $i>0$, 
\item[\em{(iv)}] the strata of $(X, D)$ are exactly the images of 
the strata of $(X', D')$, 
and 
\item[\em{(v)}] $\sigma^{-1}(C)$ is a maximal 
{\em{(}}with respect to the inclusion{\em{)}} stratum of $(X', D')$,
that is, $\sigma^{-1}(C)$ is an irreducible component of $X'$.  
\end{itemize}
\end{lem}

\begin{proof}
The proof of \cite[Lemma 2.17]{fujino-analytic-vanishing}, 
where we allow $D$ to be a boundary $\mathbb R$-divisor, 
works without any modifications. 
\end{proof}

\subsection{Complex analytic generalization of Koll\'ar's 
package}\label{z-subsec2.2}

Here, let us briefly review Koll\'ar's package (see 
\cite{kollar1} and \cite{kollar2}) in the complex analytic setting. 
We recommend that the interested reader looks at 
\cite{fujino-trans2}, 
\cite[Chapter V.~3.7.~Theorem]{nakayama3}, and \cite{takegoshi}. 

Theorem \ref{z-thm2.11} is a variant of 
Takegoshi's vanishing theorem (see 
\cite[Theorem IV Relative vanishing 
Theorem]{takegoshi} and \cite[Corollary 1.5]{fujino-trans2}). 
We note that it is well known when $f\colon X\to Y$ and 
$\pi\colon Y\to Z$ are 
projective morphisms of 
algebraic varieties. 

\begin{thm}[Vanishing theorem]\label{z-thm2.11}
Let $f\colon X \to Y$ and $\pi \colon Y\to Z$ 
be projective surjective morphisms 
between complex varieties such that $X$ is smooth. 
Let $\mathcal M$ be 
a line bundle on $Y$. Assume that $\mathcal M$ is 
$\pi$-nef and $\pi$-big over $Z$. 
Then 
\begin{equation}\label{z-eq2.2}
R^p\pi_*\left(
\mathcal M\otimes R^qf_*\omega_X\right)=0
\end{equation} 
holds 
for every $p>0$ and every $q$. 
In particular, if further $\pi$ is bimeromorphic, 
then 
\begin{equation}\label{z-eq2.3}
R^p\pi_*R^qf_*\omega_X=0
\end{equation} 
holds for every $p>0$ and every $q$. 
\end{thm}

\begin{proof} 
The vanishing theorem \eqref{z-eq2.2} is more or less well known to 
the experts. For the details, see, for example, 
\cite[Corollary 1.5]{fujino-trans2}. 
Let us quickly explain how to use 
\cite[Corollary 1.5]{fujino-trans2}. We 
take an arbitrary point $P\in Z$. 
By shrinking $Z$ around $P$, we may assume 
that $X$ is K\"ahler since $\pi$ and $f$ are projective. 
We put $g:=\pi$, $\mathcal L:=f^*\mathcal M$, $D:=0$, 
and $E:=\mathcal O_X$. Note that the 
trivial line bundle is a Nakano semipositive vector bundle. 
Then we have $R^p\pi_*(\mathcal M\otimes R^qf_*\omega_X)=0$ 
for every $p>0$ and every $q$ by \cite[Corollary 1.5]{fujino-trans2}. 
Note that \eqref{z-eq2.3} is a special case of 
\eqref{z-eq2.2}. 
This is because the trivial line bundle on $Y$ is $\pi$-nef 
and $\pi$-big when $\pi$ is bimeromorphic. 
\end{proof}

Lemma \ref{z-lem2.12} is an easy consequence of Theorem 
\ref{z-thm2.11}. 

\begin{lem}\label{z-lem2.12}
Let $f_i\colon X_i\to Y$ be a projective 
surjective morphism of complex varieties such that 
$X_i$ is smooth for every $1\leq i\leq k$. 
Let $\pi\colon Y\to Z$ be a projective bimeromorphic 
morphism between complex varieties. 
We put 
\begin{equation*}
\mathcal F:=\bigoplus _{i=1}^k R^{q_i}f_{i*}\omega_{X_i}, 
\end{equation*} 
where $q_i$ is some nonnegative integer for every $i$. 
Let $\mathcal G$ be a coherent sheaf on $Y$. 
Assume that $\mathcal G$ is a direct summand of $\mathcal F$.
Then we have $R^p\pi_*\mathcal F=0$ and 
$R^p\pi_*\mathcal G=0$ for every $p>0$. 
In particular, $\pi_*\mathcal G$ is a direct summand of 
\begin{equation*}
\bigoplus _{i=1}^k R^{q_i}(\pi\circ f_i)_* \omega_{X_i}. 
\end{equation*} 
\end{lem}

\begin{proof}
By Lemma \ref{z-thm2.11}, 
$R^p\pi_*R^{q_i}f_{i*}\omega_{X_i}=0$ holds  
for every $p>0$. Hence we have $R^p\pi_*\mathcal F=0$ for every $p>0$. 
Since $\mathcal G$ is a direct summand of $\mathcal F$, 
we obtain $R^p\pi_*\mathcal G=0$ for every $p>0$. 
Since $\pi_*\mathcal G$ is a 
direct summand of $\pi_*\mathcal F$ and we have the following 
isomorphisms:  
\begin{equation*}
\pi_*\mathcal F=\bigoplus _{i=1}^k \pi_*R^{q_i}f_{i*} \omega_{X_i}
\simeq 
\bigoplus _{i=1}^k R^{q_i}(\pi\circ f_i)_* \omega_{X_i},  
\end{equation*} 
it follows that $\pi_*\mathcal G$ is a direct summand of 
$\bigoplus _{i=1}^kR^{q_i}(\pi\circ f_i)_*\omega_{X_i}$. 
We finish the proof.  
\end{proof}

We note that the trivial line bundle is a Nakano 
semipositive vector bundle. Hence, 
Theorem \ref{z-thm2.13} below is a special case of 
Takegoshi's torsion-freeness 
(see \cite[Theorem II Torsion freeness Theorem]{takegoshi} 
and \cite[Corollary 1.2]{fujino-trans2}). 
When $f\colon X\to Y$ is a projective surjective morphism 
between projective varieties, it is nothing but Koll\'ar's 
famous torsion-freeness 
(see \cite[Theorem 2.1 (i)]{kollar1}). 

\begin{thm}[Torsion-freeness]\label{z-thm2.13}
Let $f\colon X\to Y$ be a projective surjective morphism 
of complex varieties such that $X$ is smooth. 
Then $R^qf_*\omega_X$ is torsion-free for every $q$. 
\end{thm}

\begin{proof} 
Let us see how to use 
\cite[Corollary 1.2]{fujino-trans2}. 
We take an arbitrary point $P\in Y$. 
By shrinking $Y$ around $P$, we may assume that $X$ 
is K\"ahler since $f$ is projective. 
We put $E:=\mathcal O_X$ and $F:=\mathcal O_X$ 
with trivial metrics. Then $R^qf_*\omega_X$ is 
torsion-free for every $q$ by \cite[Corollary 1.2]{fujino-trans2}. 
This is what we wanted. 
\end{proof}

Before stating Theorem \ref{z-thm2.14},
we briefly recall the definition
of the upper and the lower canonical extensions.
See e.g. \cite[Chapter \RomII]{DeligneED},
\cite[Section 2]{kollar2},
\cite[Remark 7.4]{fujino-fujisawa}
for more details. 

\begin{say}[Upper and lower canonical extensions]
\label{def-added} 
Let $Y$ be a smooth complex variety,
$\Sigma$ a reduced normal crossing divisor on $Y$,
and $(\mathcal{V},\nabla)$
a locally free $\mathcal{O}_Y$-module of finite rank $\mathcal{V}$
on $Y \setminus \Sigma$
equipped with an integrable connection $\nabla$.
Then the upper (resp.~lower) canonical extension of $\mathcal{V}$
is a locally free $\mathcal{O}_Y$-module
${}^u\mathcal{V}$ (resp.~${}^l\mathcal{V}$) of finite rank on $Y$
equipped with a log integrable connection ${}^u\nabla$
(resp.~${}^l\nabla$)
satisfying the following:
\begin{itemize}
\item
${}^u\mathcal{V}|_{Y \setminus \Sigma} \simeq \mathcal{V}$
(resp.~${}^l\mathcal{V}|_{Y \setminus \Sigma} \simeq \mathcal{V}$),
\item
${}^u\nabla|_{Y \setminus \Sigma}=\nabla$
(resp.~${}^l\nabla|_{Y \setminus \Sigma}=\nabla$)
via the identification above, and
\item
all the eigenvalues of the residue of ${}^u\nabla$
(resp.~${}^l\nabla$) along every irreducible component of $\Sigma$
are contained in $(-1,0]$ (resp.~$[0,1)$).
\end{itemize}
It is well known that
such ${}^u\mathcal{V}$ (resp.~${}^l\mathcal{V}$)
exists uniquely up to the unique isomorphism.
When it is not necessary to specify the (log) integrable connections,
we only mention ${}^u\mathcal{V}$ (resp.~${}^l\mathcal{V}$)
as the upper (resp.~lower) canonical extension of $\mathcal{V}$.
For example, if
$\mathcal{V}=\mathcal{O}_{Y \setminus \Sigma} \otimes_{\mathbb{C}} \mathbb{V}$
with some $\mathbb{C}$-local system $\mathbb{V}$ on $Y \setminus \Sigma$,
then ${}^u\mathcal{V}$ (resp.~${}^l\mathcal{V}$)
is the upper (resp.~lower) canonical extension of $\mathcal{V}$
equipped with the canonical integrable connection $\nabla=d \otimes \id$. 

For a polarizable variation of $\mathbb{R}$-Hodge structure
$(\mathbb{V},F)$ on $Y \setminus \Sigma$,
we set
$\mathcal{V}=\mathcal{O}_{Y \setminus \Sigma} \otimes_{\mathbb{R}} \mathbb{V}$
and denote ${}^u\mathcal{V}$ (resp.~${}^l\mathcal{V}$)
the upper (resp.~lower) canonical extension
of $\mathcal{V}$ as above.
Then
\begin{equation}
j_*F^p\mathcal{V} \cap {}^u\mathcal{V} \quad
\text{(resp.~}
j_*F^p\mathcal{V} \cap {}^l\mathcal{V}
\text{)},
\end{equation}
where $j$ is the open immersion $Y \setminus \Sigma \hookrightarrow Y$,
gives us a finite decreasing filtration $F$
on ${}^u\mathcal{V}$ (resp.~${}^l\mathcal{V}$)
called the upper (resp.~lower) canonical extension
of the Hode filtration $F$.
The nilpotent orbit theorem implies that
$\Gr_F^p({}^u\mathcal{V})$
(resp.~$\Gr_F^p({}^l\mathcal{V})$)
is locally free for every $p$.
For the nilpotent orbit theorem,
see \cite[(4.12)]{schmid},
\cite[Theorem A]{Deng}.
\end{say}

When $f\colon X\to Y$ is algebraic, Theorem 
\ref{z-thm2.14} below was first obtained independently 
by 
Koll\'ar (see \cite[Theorem 2.6]{kollar2}) 
and 
Nakayama (see \cite[Theorem 1]{nakayama2}). 
When $f\colon X\to Y$ is a projective morphism of 
smooth complex varieties, it was obtained by Moriwaki 
(see \cite[Theorem (2.4)]{moriwaki}). 

\begin{thm}[{Hodge filtration, see \cite[Theorem V Local 
freeness Theorem (ii)]{takegoshi}
and \cite[Chapter V, 3.7.~Theorem (4)]{nakayama3}}]\label{z-thm2.14}
Let $f\colon X\to Y$ be a proper surjective morphism 
between smooth complex varieties and let $\Sigma$ be 
a normal crossing divisor on $Y$ such that 
$f$ is smooth over $Y^*:=Y\setminus \Sigma$. 
We assume that $X$ is a K\"ahler manifold. 
Then $R^qf_*\omega_{X/Y}$ is locally free 
and is characterized as the upper canonical 
extension of the corresponding 
bottom Hodge filtration on $Y^*$ for 
every $q$. 
\end{thm}

We make a remark on the proof of Theorem \ref{z-thm2.14}. 

\begin{rem}\label{z-rem2.15} 
One of the main ingredients of \cite{nakayama2} 
is Steenbrink's result established 
in \cite{steenbrink1} and \cite{steenbrink2} 
(see \cite[Theorem 3]{nakayama2}). 
Although it was explicitly stated only for 
projective morphisms, it also holds for 
proper morphisms from K\"ahler manifolds 
(see Remark \ref{c-rem3.4} below). 
Hence the argument in \cite{nakayama2} works for K\"ahler 
manifolds with the aid of \cite{takegoshi}. 
We recommend that the interested reader looks 
at \cite[Conjectures 7.2 and 7.3]{nakayama1} and 
\cite{nakayama2}. 
\end{rem}

\section{On variations of mixed Hodge structure}\label{z-sec3}

In this section,
we present abstract arguments
which help us to obtain a variation of mixed Hodge structure
whose Hodge filtration can be extended
to its canonical extension along a simple normal crossing divisor.
Although the arguments look rather technical,
they give us an appropriate viewpoint
for the proofs of Theorems \ref{z-thm1.1} and \ref{z-thm1.4}. 
As we treat proper,
but not necessarily projective,
morphisms of complex analytic spaces
with certain K\"ahler conditions
in Theorems \ref{z-thm1.1} and \ref{z-thm1.4},
we consider (graded) polarizable variations
of (mixed) Hodge structure
defined over $\mathbb{R}$
because polarizations might not be defined over $\mathbb{Q}$
but are defined over $\mathbb{R}$.

\begin{say}
\label{para:1}
Let $X$ be a complex manifold and
\begin{equation}
K=((K_{\mathbb{R}}, W), (K_{\mathcal{O}}, W, F), \alpha)
\end{equation}
be a triple consisting of
\begin{itemize}
\item
a bounded complex of $\mathbb{R}$-sheaves $K_{\mathbb{R}}$ on $X$
equipped with a finite increasing filtration $W$,
\item
a bounded complex of $\mathcal{O}_X$-modules $K_{\mathcal{O}}$ on $X$
equipped with a finite increasing filtration $W$
and a finite decreasing filtration $F$, and
\item
a morphism of complexes of $\mathbb{R}$-sheaves
$\alpha \colon K_{\mathbb{R}} \to K_{\mathcal{O}}$
preserving the filtration $W$.
\end{itemize}
Such a triple $K$ yields a pair of spectral sequences
\begin{equation}
E_r^{p,q}(K,W)
=(E_r^{p,q}(K_{\mathbb{R}},W),
(E_r^{p,q}(K_{\mathcal{O}},W), F), E_r^{p,q}(\alpha))
\end{equation}
where $F$ on $E_r^{p,q}(K_{\mathcal{O}},W)$
stands for the first direct filtration
(cf. \cite[(1.3.8)]{DeligneII}), that is,
\begin{equation}
F^aE_r^{p,q}(K_{\mathcal{O}},W)
=\image(E_r^{p,q}(F^aK_{\mathcal{O}},W) \to E_r^{p,q}(K_{\mathcal{O}},W))
\end{equation}
for $r=0,1,2, \dots, \infty$ and for $a,p,q \in \mathbb{Z}$.
Here we remark that
$F$ on $E_r^{p,q}(K_{\mathcal{O}},W)$ above
is not necessarily a filtration by {\itshape subbundles}.
The morphism of $E_r$-terms is denoted by
$d_r^{p,q}$
for every $p,q,r$.
\end{say}

\begin{say}
For a triple $K$ as in \ref{para:1},
we consider the following conditions:
\begin{enumerate}
\item
\label{item:1}
The triple
\begin{gather}
(H^n(\Gr_m^WK_{\mathbb{R}}),
(H^n(\Gr_m^WK_{\mathcal{O}}),F), H^n(\Gr_m^W\alpha))
\end{gather}
is a variation of $\mathbb{R}$-Hodge structure
of weight $n+m$ on $X$
for every $m,n$.
\item
\label{item:4}
The spectral sequence associated to
$(\Gr_m^WK_{\mathcal{O}}, F)$
degenerates at $E_1$-terms for every $m$. 
\end{enumerate}
\end{say}

The following lemma is a counterpart
of \cite[Scholie (8.1.9)]{DeligneIII}
for variations of (mixed) Hodge structure.

\begin{lem}
\label{lem:3}
If $K$ satisfies \ref{item:1} and \ref{item:4},
then the following holds:
\begin{enumerate}
\item
\label{item:6}
$E_r^{p,q}(K,W)$
is a variation of $\mathbb{R}$-Hodge structure
of weight $q$ on $X$
for every $r=1, 2, \dots, \infty$
and for every $p,q$.
\item
The triple
\begin{equation}
(\Gr_m^WH^n(K_{\mathbb{R}}), (\Gr_m^WH^n(K_{\mathcal{O}}),F),
\Gr_m^WH^n(\alpha))
\end{equation}
is a variation of $\mathbb{R}$-Hodge structure of weight $n+m$
on $X$ for every $m, n \in \mathbb{Z}$.
\item
\label{item:11}
The spectral sequence associated
to $(K_{\mathcal{O}}, W)$ degenerates at $E_2$-terms.
\item
\label{item:17}
The spectral sequence associated to $(K_{\mathcal{O}}, F)$
degenerates at $E_1$-terms.
\item
\label{item:18}
There exists an isomorphism
\begin{equation}
E_r^{p,q}(\Gr_F^aK_{\mathcal{O}},W)
\simeq
\Gr_F^aE_r^{p,q}(K_{\mathcal{O}},W),
\end{equation}
under which the morphism of the $E_r$-terms of the left hand side
coincides with $\Gr_F^ad_r^{p,q}$ on the right hand side
for every $a,p,q,r$.
\item
\label{item:12}
The spectral sequence associated to $(\Gr_F^aK_{\mathcal{O}},W)$
degenerates at $E_2$-terms
for every $a \in \mathbb{Z}$.
\end{enumerate}
\end{lem}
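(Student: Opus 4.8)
The plan is to make \ref{item:6} the heart of the proof and to deduce the remaining assertions from it, transposing Deligne's two-filtration formalism (\cite[(1.3)]{DeligneII}, \cite[Scholie (8.1.9)]{DeligneIII}) from mixed Hodge structures to variations of $\mathbb{R}$-Hodge structure. Two elementary facts drive everything: a morphism of variations of $\mathbb{R}$-Hodge structure is strict for the Hodge filtration $F$, so its kernel, image and cokernel are again variations of $\mathbb{R}$-Hodge structure of the same weight; and a morphism between variations of pure $\mathbb{R}$-Hodge structure of \emph{different} weights vanishes identically.

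First I would prove \ref{item:6} by induction on $r$. For $r=1$, with the convention $E_1^{p,q}(K,W)=H^{p+q}(\Gr_{-p}^WK)$, hypothesis \ref{item:1} applied with $n=p+q$ and $m=-p$ shows that $E_1^{p,q}(K,W)$ is a variation of $\mathbb{R}$-Hodge structure of weight $(p+q)+(-p)=q$; hypothesis \ref{item:4} is exactly what guarantees that the first direct filtration $F$ carried by $E_1^{p,q}(K_{\mathcal{O}},W)$ agrees with the Hodge filtration of that variation, equivalently that $\Gr_F$ is exact on $\Gr_m^WK_{\mathcal{O}}$. The differential $d_1^{p,q}\colon E_1^{p,q}\to E_1^{p+1,q}$ is the connecting morphism of the short exact sequence $0\to\Gr_{-p-1}^WK\to W_{-p}K/W_{-p-2}K\to\Gr_{-p}^WK\to 0$; being $\mathbb{R}$-linear and compatible with the first direct filtration, it is a morphism of variations of weight $q$, so $E_2^{p,q}=\ker d_1/\image d_1$ is again a variation of weight $q$ by strictness. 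For the inductive step with $r\ge 2$, the target $E_r^{p+r,q-r+1}$ has weight $q-r+1\ne q$, whence $d_r=0$ by the second elementary fact. Thus the spectral sequence degenerates at $E_2$ and each $E_r^{p,q}$ ($1\le r\le\infty$) is a variation of weight $q$.

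The remaining items follow in dependency order. Since each higher differential of the triple vanishes as a morphism of variations, so does its $\mathcal{O}$-component, giving \ref{item:11}. Because every differential on every page is strict for $F$, the functor $\Gr_F^a$ is exact on the complexes computing the $W$-spectral sequence; hence it commutes with the formation of the $E_r$-terms and carries $d_r^{p,q}$ to $\Gr_F^ad_r^{p,q}$, which is \ref{item:18}, and combining \ref{item:18} with the vanishing of $d_r$ ($r\ge 2$) from \ref{item:11} yields \ref{item:12}. Assertion \ref{item:17}, the $E_1$-degeneration of $(K_{\mathcal{O}},F)$, then comes from a fibrewise dimension count of the shape $\sum_n\dim_{\mathbb{C}}H^n(\Gr_F^aK_{\mathcal{O}})=\sum_n\dim_{\mathbb{C}}\Gr_F^aH^n(K_{\mathcal{O}})$, the two sides being matched term by term through $W$ using \ref{item:1}, \ref{item:4} and the $E_2$-degeneration already established; this is the incarnation here of Deligne's two-filtration lemma. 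Finally assertion~(2) is the identification $\Gr_m^WH^n(K)\simeq E_\infty^{-m,\,n+m}(K,W)$, a variation of weight $n+m$ by \ref{item:6}, the compatibility of the $\mathcal{O}$-filtrations being exactly the strictness recorded in \ref{item:17}.

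The main obstacle is the base case of \ref{item:6} together with \ref{item:17}: one must track the first direct filtration $F$ on the $E_r$-terms, which as warned in \ref{para:1} need not be a filtration by subbundles, and verify that hypotheses \ref{item:1} and \ref{item:4} are precisely what force $F$ to behave like the Hodge filtration of a variation---so that $\Gr_F$ is exact and the direct and recurrent filtrations on cohomology coincide. Once strictness for $F$ is in force on every page, the weight argument killing the higher differentials is purely formal; all the delicate work lies in establishing that strictness, which is the analogue for variations of the core of Deligne's Scholie (8.1.9).
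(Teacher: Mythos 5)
Your proposal is correct and is essentially the paper's own argument: the paper proves Lemma \ref{lem:3} in two lines, by noting that \ref{item:4} makes the differentials of $\Gr_m^WK_{\mathcal{O}}$ strict for $F$ and then citing \cite[Proposition (7.2.8) and Scholie (8.1.9)]{DeligneIII} together with Lemma \ref{lem:2}, and your induction (Hodge structure on $E_1$ from \ref{item:1} and \ref{item:4}, abelianness of the category giving $E_2$, weight reasons killing $d_r$ for $r\ge 2$, and the two-filtration formalism for the $F$-statements) is exactly what that citation abbreviates. One point to phrase carefully: what kills the higher differentials is that a real morphism compatible with $F$ from a pure variation of weight $q$ to one of weight $q-r+1<q$ vanishes --- the vanishing holds when the weight strictly drops (the direction the $d_r$ actually go), not for arbitrary pairs of distinct weights, since maps compatible with $F$ into strictly higher weight need not vanish.
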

\begin{proof}
From the assumption \ref{item:4}, 
the differential of $\Gr_m^WK_{\mathcal O}$ 
is strictly compatible with $F$ for every $m$.
Then we obtain the conclusions
from the assumption \ref{item:1}
by a similar argument to
\cite[Proposition (7.2.8) and Scholie (8.1.9)]{DeligneIII}
and by Lemma \ref{lem:2} below.
\end{proof}

\begin{lem}
\label{lem:2}
The category of the variations of $\mathbb{R}$-Hodge structure
of a fixed weight on a complex manifold
is an abelian category.
\end{lem}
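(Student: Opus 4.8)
The plan is to establish the four defining properties of an abelian category for the category $\mathcal{C}$ of variations of $\mathbb{R}$-Hodge structure of fixed weight $n$ on a complex manifold $X$: existence of a zero object and finite biproducts (additivity), existence of kernels and cokernels, and the condition that every monomorphism is a kernel and every epimorphism is a cokernel (so that the canonical map from the coimage to the image is an isomorphism). A morphism $\varphi$ in $\mathcal{C}$ is a pair $(\varphi_{\mathbb{R}}, \varphi_{\mathcal{O}})$ compatible with $\alpha$ and preserving $F$; the additive structure is inherited termwise, and the zero object and direct sums are immediate, so the substantive content is the last two conditions.

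First I would recall the classical rigidity input for morphisms of Hodge structures: on each fiber a morphism of $\mathbb{R}$-Hodge structures of the same weight is automatically \emph{strict} with respect to the Hodge filtration $F$ (this is the pointwise statement underlying Deligne's theory, and is exactly the feature that forces the morphism to be a morphism of the associated bigradings). The key step is therefore to promote this pointwise strictness to a statement about the underlying analytic sheaves: given $\varphi\colon \mathcal{H} \to \mathcal{H}'$ in $\mathcal{C}$, I would show that $\kernel \varphi_{\mathcal{O}}$ and $\image \varphi_{\mathcal{O}}$ are \emph{locally free subbundles} of the ambient Hodge bundles whose fibers are the pointwise kernels and images, and that the induced filtrations $F$ on them are again filtrations by subbundles. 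Because strictness holds fiberwise and the Hodge numbers $\dim_{\mathbb{C}} \Gr_F^p$ are locally constant (a morphism of pure $\mathbb{R}$-Hodge structures of a fixed weight has locally constant rank on each Hodge piece), the ranks do not jump, and so the sheaf-theoretic kernel and image are vector subbundles with the correct fiberwise Hodge data.

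With this in hand I would define the candidate kernel as the triple $(\kernel \varphi_{\mathbb{R}}, (\kernel \varphi_{\mathcal{O}}, F|_{\kernel}), \alpha|_{\kernel})$ and the cokernel analogously from $\coker \varphi_{\mathbb{R}}$ and $\coker \varphi_{\mathcal{O}}$ with the induced quotient filtration, and then verify that each is again an object of $\mathcal{C}$, i.e.\ a variation of $\mathbb{R}$-Hodge structure of weight $n$: the real local system structure and the flat connection restrict/descend, Griffiths transversality is inherited by subbundles and quotient bundles, and the fiberwise polarization conditions hold because a sub-Hodge-structure and quotient Hodge structure of a weight-$n$ Hodge structure is again pure of weight $n$. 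The universal properties then follow formally. Finally, the abelian-category axiom that coimage equals image reduces precisely to the statement that the natural map $\coker(\kernel \varphi \to \mathcal{H}) \to \kernel(\mathcal{H}' \to \coker \varphi)$ is an isomorphism, which is again a consequence of fiberwise strictness together with the constancy of ranks.

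I expect the main obstacle to be the local freeness/subbundle claim, namely controlling the behavior of $\kernel \varphi_{\mathcal{O}}$, $\image \varphi_{\mathcal{O}}$, and the induced $F$ as $\mathcal{O}_X$-sheaves rather than merely fiberwise. The danger is that for a map of holomorphic bundles the image need not be a subbundle where the rank drops; here the Hodge-theoretic strictness is exactly what rules out such rank drops, so the crux is to argue cleanly that the locally constant Hodge numbers force constant rank and hence a genuine subbundle with compatible filtration. Once this local freeness is secured, the remaining verifications—that the natural triples are variations of $\mathbb{R}$-Hodge structure and that the universal properties and the coimage-image isomorphism hold—are formal and follow the pattern of the pointwise theory combined with Griffiths transversality.
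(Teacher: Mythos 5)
Your proposal is correct, but it takes a more self-contained route than the paper. The paper's proof is essentially two lines: it cites Lemma 3.14 (i) of Fujino--Fujisawa (with $\mathbb{Q}$ replaced by $\mathbb{R}$), which already supplies the fiberwise abelian-category structure --- kernels and cokernels as filtered local systems, together with the coimage--image axiom --- so that the only remaining point is Griffiths transversality for $\kernel(\varphi)$ and $\coker(\varphi)$; this is then read off from the exact commutative diagram comparing $d \otimes \id$ on $\mathcal{O}_X \otimes (\cdot)$ with $\Omega^1_X \otimes (\cdot)$, the filtrations being the induced ones. You instead reconstruct the cited input from first principles: pointwise strictness in a fixed weight, constancy of ranks, the subbundle property of the induced Hodge filtrations, and then transversality, the universal properties, and coimage $=$ image. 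The mathematical engine (Deligne strictness for morphisms of Hodge structures of equal weight) is the same in both; your version buys independence from the earlier paper at the cost of carrying the bundle-theoretic details explicitly. Two refinements to your sketch: first, the local freeness of $\kernel \varphi_{\mathcal{O}}$ and $\image \varphi_{\mathcal{O}}$ themselves requires no Hodge theory at all, since $\varphi_{\mathcal{O}} = \id \otimes \varphi_{\mathbb{R}}$ for a morphism of local systems and hence has locally constant rank; strictness is needed exactly where you locate the crux, namely for $F^p\varphi$ and $\Gr^p_F \varphi$, and the clean argument there is that each $\rank \Gr^p_F \varphi_x$ is lower semicontinuous in $x$ while, by fiberwise strictness, their sum equals the constant $\rank \varphi_x$, which forces each summand to be locally constant. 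Second, polarizations are irrelevant here --- the lemma concerns all variations of $\mathbb{R}$-Hodge structure of a fixed weight, not polarizable ones --- so your sentence about fiberwise polarization conditions should simply be dropped.
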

\begin{proof}
By Lemma 3.14 (i) of \cite{fujino-fujisawa} 
(with $\mathbb{Q}$ replaced by $\mathbb{R}$), 
it suffices to prove that
the kernel and the cokernel of a morphism
of variations of $\mathbb{R}$-Hodge structure
satisfy the Griffiths transversality.
Let $\varphi \colon \mathbb{V}_1 \to \mathbb{V}_2$
be a morphism of variations of $\mathbb{R}$-Hodge structure
of the same weight on a complex manifold $X$.
Then, from the commutative diagram
\begin{equation}
\xymatrix{
0 \ar[r] & \mathcal{O}_X \otimes \kernel(\varphi) \ar[r] \ar[d]_{d \otimes \id}
& \mathcal{O}_X \otimes \mathbb{V}_1 \ar[r] \ar[r] \ar[d]_{d \otimes \id}
& \mathcal{O}_X \otimes \mathbb{V}_2 \ar[r] \ar[r] \ar[d]^{d \otimes \id}
& \mathcal{O}_X \otimes \coker(\varphi) \ar[r] \ar[r] \ar[d]^{d \otimes \id}
& 0 \\
0 \ar[r] & \Omega^1_X \otimes \kernel(\varphi) \ar[r]
& \Omega^1_X \otimes \mathbb{V}_1 \ar[r]
& \Omega^1_X \otimes \mathbb{V}_2 \ar[r]
& \Omega^1_X \otimes \coker(\varphi) \ar[r]
& 0, \\
}
\end{equation}
we can easily check that the first and the last vertical arrows
satisfy the Griffiths transversality.
Here we note that the Hodge filtrations
on $\mathcal{O}_X \otimes \kernel(\varphi)$
and $\mathcal{O}_X \otimes \coker(\varphi)$
are induced from
the Hodge filtrations
on $\mathcal{O}_X \otimes \mathbb{V}_1$
and $\mathcal{O}_X \otimes \mathbb{V}_2$ respectively.
\end{proof}

\begin{say}
\label{para:2}
Now we study the case
of a pair $(X, \Sigma)$,
where $X$ is a complex manifold
and $\Sigma$ a reduced simple normal crossing divisor on $X$
having finitely many irreducible components.
We set $X^*=X \setminus \Sigma$.
\end{say}

The following elementary lemma
will be used several times
in the present and next sections.

\begin{lem}
\label{c-lem3.1}
Let $\mathcal{F}$ and $\mathcal{G}$
be locally free $\mathcal{O}_X$-modules of finite rank on $X$
and $\varphi, \psi \colon \mathcal{F} \to \mathcal{G}$
morphisms of $\mathcal{O}_X$-modules.
If $\varphi|_{X^*}=\psi|_{X^*}$, then $\varphi=\psi$.
In particular, if $\varphi|_{X^*}=0$ then $\varphi=0$.
\end{lem}

\begin{say}
\label{para:4}
For the case of $(X, \Sigma)$,
we consider a triple
\begin{equation}
K=((K_{\mathbb{R}}, W), (K_{\mathcal{O}}, W, F), \alpha)
\end{equation}
consisting of
\begin{itemize}
\item
a bounded complex of $\mathbb{R}$-sheaves $K_{\mathbb{R}}$ on $X^*$
equipped with a finite increasing filtration $W$,
\item
a bounded complex of $\mathcal{O}_X$-modules $K_{\mathcal{O}}$ on $X$
equipped with a finite increasing filtration $W$
and a finite decreasing filtration $F$, and
\item
a morphism of complexes of $\mathbb{R}$-sheaves
$\alpha \colon K_{\mathbb{R}} \to K_{\mathcal{O}}|_{X^*}$
preserving the filtration $W$.
\end{itemize}
Note that only $(K_{\mathcal{O}},W,F)$ is defined over the whole $X$.
We set
\begin{equation}
K|_{X^*}=((K_{\mathbb{R}}, W), (K_{\mathcal{O}}, W, F)|_{X^*}, \alpha),
\end{equation}
which is a triple on $X^*$
considered in \ref{para:1},
and consider the following conditions:
\begin{enumerate}
\item
\label{item:7}
$K|_{X^*}$ satisfies \ref{item:1} and \ref{item:4} on $X^*$.
\item
For every $m,n$,
the $\mathbb{R}$-local system $H^n(\Gr_m^WK_{\mathbb{R}})$
is of quasi-unipotent local monodromies
around all the irreducible components of $\Sigma$.
\item
\label{item:9}
For every $m,n$,
the variation of $\mathbb{R}$-Hodge structure on $X^*$
in \ref{item:1} for $K|_{X^*}$
is polarizable.
\end{enumerate}
If $K$ satisfies these conditions,
then the lower canonical extension of
$\mathcal{O}_{X^*} \otimes H^n(\Gr_m^WK_{\mathbb{R}})
\simeq H^n(\Gr_m^WK_{\mathcal{O}})|_{X^*}$
is denoted by
${}^\ell H^n(\Gr_m^WK_{\mathcal{O}})|_{X^*}$
for every $m,n$.
By Schmid's nilpotent orbit theorem
\cite[(4.12)]{schmid},
the Hodge filtration $F$ on
$H^n(\Gr_m^WK_{\mathcal{O}})|_{X^*}$
extends to a filtration (by subbundles)
on ${}^\ell H^n(\Gr_m^WK_{\mathcal{O}})|_{X^*}$,
which is denoted by the same letter $F$.
Moreover, the condition \ref{item:4} for $K|_{X^*}$
implies that there exists a natural isomorphism
\begin{equation}
\label{eq:14}
H^n(\Gr_F^a\Gr_m^WK_{\mathcal{O}})|_{X^*}
\simeq
\Gr_F^aH^n(\Gr_m^WK_{\mathcal{O}})|_{X^*}
\end{equation}
for every $a,m,n$.
Here we remark that this morphism
induces the isomorphism in \ref{item:18} for $r=1$.

On the other hand,
we conclude \ref{item:6}--\ref{item:12} for $K|_{X^*}$
by Lemma \ref{lem:3}.
In addition, we can easily obtain the following:
\begin{enumerate}[resume]
\item
\label{item:14}
$H^n(K_{\mathbb{R}})$
is an $\mathbb{R}$-local system on $X^*$
of quasi-unipotent local monodromy along $\Sigma$
for every $n$.
\item
\label{item:16}
$W_mH^n(K_{\mathcal{O}})|_{X^*}
\simeq \mathcal{O}_{X^*} \otimes W_mH^n(K_{\mathbb{R}})$
for every $m,n$.
In particular, we have
$H^n(K_{\mathcal{O}})|_{X^*}
\simeq \mathcal{O}_{X^*} \otimes H^n(K_{\mathbb{R}})$
for every $n$.
\item
\label{item:15}
The variation of $\mathbb{R}$-Hodge structure
$E_r^{p,q}(K|_{X^*}, W)$ is polarizable
for every $p,q,r$.
In particular,
the variation of $\mathbb{R}$-Hodge structure
\begin{equation}
(\Gr_m^WH^n(K_{\mathbb{R}}), (\Gr_m^WH^n(K_{\mathcal{O}}),F)|_{X^*},
\Gr_m^WH^n(\alpha))
\end{equation}
is polarizable.
\end{enumerate}
\end{say}

The following lemma and theorem play essential roles
in the proofs of Theorems \ref{z-thm1.1} and \ref{z-thm1.4}.

\begin{lem}
\label{lem:1}
Let $(X, \Sigma)$ be as in \ref{para:2}
and $K=((K_{\mathbb{R}}, W), (K_{\mathcal{O}}, W, F), \alpha)$
as in \ref{para:4}
satisfying the conditions
\ref{item:7}--\ref{item:9}.
For $a \in \mathbb{Z}$,
if there exists an isomorphism
\begin{equation}
\label{eq:15}
H^n(\Gr_F^a\Gr_m^WK_{\mathcal{O}})
\overset{\simeq}{\longrightarrow}
\Gr_F^a({}^\ell H^n(\Gr_m^WK_{\mathcal{O}})|_{X^*})
\end{equation}
whose restriction to $X^*$
coincides with the isomorphism \eqref{eq:14}
for every $m,n$, 
then the spectral sequence
associated to
$(\Gr_F^aK_{\mathcal{O}},W)$
degenerates at $E_2$-terms
and the morphism of $E_1$-terms 
splits.
Moreover $\Gr_m^WH^n(\Gr_F^aK_{\mathcal{O}})$
is locally free of finite rank for every $m,n$.
In particular, so is
$H^n(\Gr_F^aK_{\mathcal{O}})$ for every $n$.
\end{lem}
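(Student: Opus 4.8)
The plan is to analyze the spectral sequence of $(\Gr_F^aK_{\mathcal O}, W)$ term by term, transporting Hodge-theoretic information from $X^*$ to all of $X$ by means of Lemma \ref{c-lem3.1}. First I would identify the $E_1$-terms. By \ref{item:18} for $r=1$ (available on $X^*$ through Lemma \ref{lem:3}) together with the hypothesis \eqref{eq:15}, the sheaf $E_1^{p,q}(\Gr_F^aK_{\mathcal O},W)=H^{p+q}(\Gr_F^a\Gr_{-p}^WK_{\mathcal O})$ is isomorphic to $\Gr_F^a\bigl({}^\ell H^{p+q}(\Gr_{-p}^WK_{\mathcal O})|_{X^*}\bigr)$. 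Since the Hodge filtration $F$ on a lower canonical extension is a filtration by subbundles (Schmid's nilpotent orbit theorem, as recorded in \ref{para:4}), each such graded piece is a locally free $\mathcal O_X$-module of finite rank. Thus every $E_1$-term is locally free on the whole of $X$, which is what makes Lemma \ref{c-lem3.1} applicable below.

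Next I would produce the splitting over $X^*$. There the terms $E_1^{p,q}(K_{\mathcal O},W)=H^{p+q}(\Gr_{-p}^WK_{\mathcal O})$ carry, by \ref{item:1} and Lemma \ref{lem:3}\,\ref{item:6}, a polarizable (condition \ref{item:9}, see \ref{item:15}) variation of $\mathbb R$-Hodge structure of weight $q$, and $d_1$ is a morphism of such variations of the \emph{same} weight $q$. Because polarizable variations of $\mathbb R$-Hodge structure of a fixed weight form a semisimple abelian category—orthogonal complements with respect to a polarization split every short exact sequence—the complex $(E_1^{\bullet,q}(K_{\mathcal O},W),d_1)$ splits on $X^*$ as a direct sum of variations of Hodge structure, into an acyclic part and the cohomology $E_2^{\bullet,q}$. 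Applying the exact functor $\Gr_F^a$ (morphisms of Hodge structures of equal weight are strict for $F$) and invoking \ref{item:18} again yields a splitting of $(E_1^{\bullet,q}(\Gr_F^aK_{\mathcal O},W),d_1)|_{X^*}$ into locally free pieces.

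The heart of the argument is to propagate this decomposition across $\Sigma$. Since the decomposition of $E_1^{p,q}(K_{\mathcal O},W)|_{X^*}$ is one of variations of Hodge structure, the associated projectors are flat; by functoriality of lower canonical extensions the decomposition extends to ${}^\ell E_1^{p,q}(K_{\mathcal O},W)$, and taking $\Gr_F^a$ gives a direct-sum decomposition of $E_1^{p,q}(\Gr_F^aK_{\mathcal O},W)$ on $X$ restricting to the one above. The globally defined morphism $d_1$ is then a map of locally free sheaves whose off-diagonal components relative to this decomposition vanish on $X^*$; by Lemma \ref{c-lem3.1} they vanish on $X$. Hence $d_1$ splits on $X$, and $E_2^{p,q}(\Gr_F^aK_{\mathcal O},W)$ is identified with a direct summand of $E_1^{p,q}$, so it is locally free. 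I expect this extension step to be the main obstacle: the splitting is purely Hodge-theoretic on $X^*$, but its persistence over $\Sigma$ relies on combining functoriality of canonical extensions with the rigidity Lemma \ref{c-lem3.1}, and one must check that the extended projectors are $\mathcal O_X$-linear idempotents (their squares and themselves agree on $X^*$) compatible with the single globally given differential.

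Finally I would run the degeneration and freeness bookkeeping. On $X^*$ the spectral sequence degenerates at $E_2$ by Lemma \ref{lem:3}\,\ref{item:12}, so $d_r|_{X^*}=0$ for all $r\ge 2$. Since $E_2$ is locally free, an induction on $r$ shows each $d_r$ is a morphism of locally free sheaves vanishing on $X^*$, hence zero on $X$ by Lemma \ref{c-lem3.1}; therefore the spectral sequence degenerates at $E_2$ on $X$ and $E_\infty^{p,q}=E_2^{p,q}$ is locally free. As $\Gr_m^WH^n(\Gr_F^aK_{\mathcal O})\simeq E_\infty^{-m,\,n+m}$, it is locally free of finite rank for all $m,n$; and because $W$ is a finite filtration and an extension of locally free sheaves by locally free sheaves is locally free, $H^n(\Gr_F^aK_{\mathcal O})$ is locally free for every $n$, completing the proof.
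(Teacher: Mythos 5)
Your proposal is correct and follows essentially the same route as the paper: identify the $E_1$-terms of $(\Gr_F^aK_{\mathcal O},W)$ with $\Gr_F^a$ of lower canonical extensions via the hypothesis (hence locally free), split the $E_1$-differential over $X^*$ by semisimplicity of polarizable variations of $\mathbb{R}$-Hodge structure, extend that splitting across $\Sigma$ by uniqueness/functoriality of canonical extensions of the filtered pieces, and then use the rigidity Lemma \ref{c-lem3.1} inductively to kill $d_r$ for $r\ge 2$ and conclude local freeness of the $E_2=E_\infty$ terms. The paper packages the splitting concretely as $E_1^{p,q}\simeq E_2^{p,q}\oplus I^{p,q}\oplus I^{p+1,q}$ with $I^{p,q}=\image(d_1^{p-1,q}|_{X^*})$ and cites \cite[Corollary 5.2]{fujino-fujisawa} for the uniqueness of the extended filtration, but this is the same argument you describe with projectors.
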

\begin{proof}
Because the variation of $\mathbb{R}$-Hodge structure
$E_r^{p,q}(K|_{X^*},W)$
is of quasi-unipotent local monodromy and polarizable
as mentioned in \ref{item:16},
we obtain a filtered $\mathcal{O}_X$-module
$({}^\ell E_r^{p,q}(K_{\mathcal{O}},W)|_{X^*}, F)$
for every $p,q,r$
by using Schmid's nilpotent orbit theorem
as in \ref{para:4}.

Because $d_1^{p,q}|_{X^*}$
is a morphism of variations of $\mathbb{R}$-Hodge structure of weight $q$,
we obtain a polarizable variation of $\mathbb{R}$-Hodge structure
$I^{p,q}$ of weight $q$ on $X^*$
by
\begin{equation}
\begin{split}
I^{p,q}
&=(I_{\mathbb{R}}^{p,q}, (I_{\mathcal{O}}^{p,q},F)) \\
&=\image(d_1^{p-1,q}|_{X^*} \colon
E_1^{p-1,q}(K,W)|_{X^*}
\to E_1^{p,q}(K,W)|_{X^*})
\end{split}
\end{equation}
for every $p,q$.
It is clear that
$I_{\mathbb{R}}^{p,q}$ is of quasi-unipotent local monodromy along $\Sigma$.
Thus we obtain a filtered $\mathcal{O}_X$-module
$({}^\ell I_{\mathcal{O}}^{p,q},F)$
as in \ref{para:4} again.

By the semisimplicity of the polarizable variations
of $\mathbb{R}$-Hodge structure,
we have the direct sum decomposition
\begin{equation}
E_1^{p,q}(K|_{X^*},W)
\simeq
E_2^{p,q}(K|_{X^*},W) \oplus I^{p,q} \oplus I^{p+1,q}
\end{equation}
as variations of $\mathbb{R}$-Hodge structure on $X^*$.
Therefore we obtain the direct sum decomposition
\begin{equation}
\label{eq:11}
({}^\ell E_1^{p,q}(K_{\mathcal{O}},W)|_{X^*},F)
\simeq
({}^\ell E_2^{p,q}(K_{\mathcal{O}},W)|_{X^*},F)
\oplus ({}^\ell I_{\mathcal{O}}^{p,q},F)
\oplus ({}^\ell I_{\mathcal{O}}^{p+1,q},F)
\end{equation}
as filtered $\mathcal{O}_X$-modules
because the extension of the filtration
is unique by \cite[Corollary 5.2]{fujino-fujisawa}.
It is clear that the morphism ${}^\ell d_1^{p,q}|_{X^*}$
is identified with the composite of
the projection
\begin{equation}
{}^\ell E_2^{p,q}(K_{\mathcal{O}},W)|_{X^*}
\oplus {}^\ell I_{\mathcal{O}}^{p,q}
\oplus {}^\ell I_{\mathcal{O}}^{p+1,q}
\to {}^\ell I_{\mathcal{O}}^{p+1,q}
\end{equation}
and the inclusion
\begin{equation}
{}^\ell I_{\mathcal{O}}^{p+1,q}
\hookrightarrow {}^\ell E_2^{p+1,q}(K_{\mathcal{O}},W)|_{X^*}
\end{equation}
under the isomorphism \eqref{eq:11}.

We fix $a \in \mathbb{Z}$
satisfying the assumption,
and consider the spectral sequence 
\begin{equation}
\label{eq:9}
E_r^{p,q}(\Gr_F^aK_{\mathcal{O}}, W)
\Rightarrow
H^{p+q}(\Gr_F^aK_{\mathcal{O}})
\end{equation}
with the morphism of $E_r$-terms
$d_r^{p,q}(\Gr_F^aK_{\mathcal{O}},W)$.
By the assumption,
there exists an isomorphism
\begin{equation}
\label{eq:12}
E_1^{p,q}(\Gr_F^aK_{\mathcal{O}}, W)
\simeq
\Gr_F^a({}^\ell E_1^{p,q}(K_{\mathcal{O}},W)|_{X^*})
\end{equation}
for every $p,q$,
because we have
\begin{equation}
\begin{split}
E_1^{p,q}(\Gr_F^aK_{\mathcal{O}}, W)
&\simeq
H^{p+q}(\Gr_{-p}^W\Gr_F^aK_{\mathcal{O}}) \\
&\simeq
H^{p+q}(\Gr_F^a\Gr_{-p}^WK_{\mathcal{O}}) \\
&\simeq
\Gr_F^a({}^\ell H^{p+q}(\Gr_{-p}^WK_{\mathcal{O}})|_{X^*}) \\
&\simeq
\Gr_F^a({}^\ell E_1^{p,q}(K_{\mathcal{O}},W)|_{X^*}).
\end{split}
\end{equation}
In particular,
$E_1^{p,q}(\Gr_F^aK_{\mathcal{O}}, W)$
is locally free of finite rank for every $p,q$.
Moreover, the restriction of the isomorphism \eqref{eq:12} to $X^*$
identifies
$d_1^{p,q}(\Gr_F^aK_{\mathcal{O}},W)|_{X^*}$
and $\Gr_F^ad_1^{p,q}|_{X^*}$
by the assumption and by \ref{item:18} for $K|_{X^*}$.
Then
\begin{equation}
d_1^{p,q}(\Gr_F^aK_{\mathcal{O}},W)
=
\Gr_F^a({}^\ell d_1^{p,q}|_{X^*})
\end{equation}
under the identification \eqref{eq:12}
by Lemma \ref{c-lem3.1}. 
Therefore there exists an isomorphism
\begin{equation}
E_2^{p,q}(\Gr_F^aK_{\mathcal{O}}, W)
\simeq
\Gr_F^a({}^\ell E_2^{p,q}(K_{\mathcal{O}},W)|_{X^*})
\end{equation}
and $E_2^{p,q}(\Gr_F^aK_{\mathcal{O}}, W)$
is isomorphic to a direct factor of
$E_1^{p,q}(\Gr_F^aK_{\mathcal{O}}, W)$
by the direct sum decomposition \eqref{eq:11}.
In particular,
$E_2^{p,q}(\Gr_F^aK_{\mathcal{O}}, W)$
is locally free of finite rank.
Then $d_2^{p,q}(\Gr_F^aK_{\mathcal{O}}, W)=0$
because $d_2^{p,q}(\Gr_F^aK_{\mathcal{O}}, W)|_{X^*}=0$
by \ref{item:12} of Lemma \ref{lem:3}.
Inductively,
$E_r^{p,q}(\Gr_F^aK_{\mathcal{O}}, W)$
is locally free of finite rank, and
$d_r^{p,q}(\Gr_F^aK_{\mathcal{O}}, W)=0$
for $r \ge 2$
by \ref{item:12} of Lemma \ref{lem:3} again.
Thus the spectral sequence \eqref{eq:12}
degenerates at $E_2$-terms.
Moreover,
\begin{equation}
\Gr_m^WH^n(\Gr_F^aK_{\mathcal{O}})
\simeq
E_2^{-m,n+m}(\Gr_F^aK_{\mathcal{O}}, W)
\simeq
\Gr_F^a({}^\ell E_2^{-m,n+m}(K_{\mathcal{O}},W)|_{X^*})
\end{equation}
is locally free of finite rank for every $m,n$.
\end{proof}

\begin{thm}
\label{lem:4}
Let $(X, \Sigma)$ be as in \ref{para:2}
and $K=((K_{\mathbb{R}}, W), (K_{\mathcal{O}}, W, F), \alpha)$
as in \ref{para:4}
satisfying the conditions
\ref{item:7}--\ref{item:9}.
If there exists an isomorphism
\begin{equation}
\label{eq:3}
H^n(\Gr_m^WK_{\mathcal{O}})
\simeq {}^\ell H^n(\Gr_m^WK_{\mathcal{O}})|_{X^*}
\end{equation}
whose restriction to $X^*$
is the identity
for every $m,n$,
then we have the following:
\begin{enumerate}
\item
\label{item:8}
There exist isomorphisms
\begin{gather}
H^n(K_{\mathcal{O}})
\simeq {}^\ell H^n(K_{\mathcal{O}})|_{X^*}, \\
W_mH^n(K_{\mathcal{O}})
\simeq {}^\ell W_mH^n(K_{\mathcal{O}})|_{X^*},
\end{gather}
whose restriction to $X^*$ coincide with the identities,
for every $m, n \in \mathbb{Z}$.
In particular,
$\Gr_m^WH^n(K_{\mathcal{O}})$
is locally free of finite rank on $X$ for every $m,n$.
\item
\label{item:2}
The spectral sequence associated to $(K_{\mathcal{O}}, W)$ 
degenerates at $E_2$-terms
on $X$.
\end{enumerate}

If we further assume that
$H^n(\Gr_F^a\Gr_m^WK_{\mathcal{O}})$ is locally free of finite rank
for every $a,m,n$
and that
$K_{\mathcal{O}}$ satisfies \ref{item:4}
on the whole $X$,
then we have the following:
\begin{enumerate}[resume]
\item
\label{item:3}
The spectral sequence associated to $(K_{\mathcal{O}},F)$
degenerates at $E_1$-terms.
\item
\label{item:5}
$\Gr_F^a\Gr_m^WH^n(K_{\mathcal{O}})$
is a locally free $\mathcal{O}_X$-module of finite rank
for every $a,m,n$.
\item
\label{item:10}
The spectral sequence associated to $(\Gr_F^aK_{\mathcal{O}},W)$
degenerates at $E_2$-terms for every $a$.
\end{enumerate}
\end{thm}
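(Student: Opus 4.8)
The plan is to globalize from $X^*$ to $X$ the degeneration statements of Lemma \ref{lem:3}, using the hypothesis \eqref{eq:3} to transport the weight spectral sequence to its canonical extension, and then to feed the resulting data into Lemma \ref{lem:1}. I would first prove \ref{item:2} together with the local freeness asserted in \ref{item:8}. The hypothesis \eqref{eq:3} identifies the $E_1$-term $E_1^{p,q}(K_{\mathcal O},W)=H^{p+q}(\Gr^W_{-p}K_{\mathcal O})$ with the lower canonical extension ${}^\ell E_1^{p,q}(K_{\mathcal O},W)|_{X^*}$ of the polarizable variation of $\mathbb{R}$-Hodge structure $E_1^{p,q}(K|_{X^*},W)$ of weight $q$ provided by \ref{item:6} of Lemma \ref{lem:3} for $K|_{X^*}$. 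Under this identification $d_1^{p,q}(K_{\mathcal O},W)$ restricts on $X^*$ to the morphism of variations $d_1^{p,q}|_{X^*}$, whose canonical extension ${}^\ell d_1^{p,q}|_{X^*}$ is defined on all of $X$; as both are morphisms of locally free sheaves agreeing on $X^*$, Lemma \ref{c-lem3.1} gives $d_1^{p,q}(K_{\mathcal O},W)={}^\ell d_1^{p,q}|_{X^*}$. The direct sum decomposition \eqref{eq:11} established in the proof of Lemma \ref{lem:1}, available here because $K$ satisfies \ref{item:7}--\ref{item:9}, then exhibits $E_2^{p,q}(K_{\mathcal O},W)\simeq{}^\ell E_2^{p,q}(K_{\mathcal O},W)|_{X^*}$ as a direct factor of $E_1^{p,q}$, hence as a locally free sheaf. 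Since the higher differentials vanish on $X^*$ by \ref{item:11} of Lemma \ref{lem:3} and the $E_r$-terms are inductively locally free for $r\ge2$, Lemma \ref{c-lem3.1} forces $d_r^{p,q}(K_{\mathcal O},W)=0$ for $r\ge2$, which is \ref{item:2}. Degeneration then yields $\Gr^W_mH^n(K_{\mathcal O})\simeq E_2^{-m,n+m}(K_{\mathcal O},W)\simeq{}^\ell\Gr^W_mH^n(K_{\mathcal O})|_{X^*}$, giving the local freeness in \ref{item:8}.

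To upgrade these graded isomorphisms to the filtered isomorphisms of \ref{item:8}, I would induct on the weight $m$, starting from $W_m=0$ for $m\ll0$. The exactness of the lower canonical extension functor on quasi-unipotent local systems turns the weight-filtration sequence $0\to W_{m-1}H^n|_{X^*}\to W_mH^n|_{X^*}\to\Gr^W_mH^n|_{X^*}\to0$ into the corresponding sequence of canonical extensions, and the uniqueness of the filtered extension \cite[Corollary 5.2]{fujino-fujisawa} identifies the abstractly constructed filtration $W$ on $H^n(K_{\mathcal O})$ with ${}^\ell(W_\bullet H^n|_{X^*})$, compatibly with the identity on $X^*$. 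Taking $m\gg0$ produces $H^n(K_{\mathcal O})\simeq{}^\ell H^n(K_{\mathcal O})|_{X^*}$.

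For the second half I would manufacture the hypothesis \eqref{eq:15} of Lemma \ref{lem:1} and then invoke that lemma. Condition \ref{item:4} on the whole of $X$ makes $(\Gr^W_mK_{\mathcal O},F)$ degenerate at $E_1$, so $\Gr^a_FH^n(\Gr^W_mK_{\mathcal O})\simeq H^n(\Gr^a_F\Gr^W_mK_{\mathcal O})$; combining this with \eqref{eq:3} and the extension of the Hodge filtration from \ref{para:4} gives the isomorphism \eqref{eq:15}. Here one must check that the filtration on $H^n(\Gr^W_mK_{\mathcal O})$ induced by the complex coincides with the extended Hodge filtration, which holds because both are subbundle filtrations agreeing on $X^*$ and hence equal by (the subbundle form of) Lemma \ref{c-lem3.1}; this also ensures \eqref{eq:15} restricts to \eqref{eq:14}. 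Lemma \ref{lem:1} then gives \ref{item:10} and the local freeness of $\Gr^W_mH^n(\Gr^a_FK_{\mathcal O})$. The compatibility $E_2^{-m,n+m}(\Gr^a_FK_{\mathcal O},W)\simeq\Gr^a_FE_2^{-m,n+m}(K_{\mathcal O},W)$, read off from the proof of Lemma \ref{lem:1} together with \ref{item:2}, yields $\Gr^a_F\Gr^W_mH^n(K_{\mathcal O})\simeq\Gr^W_mH^n(\Gr^a_FK_{\mathcal O})$ and hence \ref{item:5}; summing ranks over $a$ and $m$ and using the degenerations \ref{item:2} and \ref{item:10} gives $\sum_a\rank H^n(\Gr^a_FK_{\mathcal O})=\rank H^n(K_{\mathcal O})$, which is the $E_1$-degeneration \ref{item:3} (alternatively one invokes Deligne's two-filtration formalism).

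The main obstacle is the filtered, rather than merely graded, comparison with the canonical extension in \ref{item:8}. Since $K_{\mathcal O}$ carries no connection on $X$, the universal property of Deligne's extension is unavailable, so one cannot directly conclude that a locally free sheaf with canonically extended graded pieces is itself a canonical extension; the canonical-extension structure must instead be transported through the degenerate, split weight spectral sequence, and the delicate point is precisely that the extension classes of the successive filtration steps of $H^n(K_{\mathcal O})$ coincide with those of ${}^\ell H^n(K_{\mathcal O})|_{X^*}$. This is what forces the combined use of the exactness of the extension functor and \cite[Corollary 5.2]{fujino-fujisawa}. A secondary technical point, needed before Lemma \ref{lem:1} can be applied in the second half, is the verification that the two a priori distinct Hodge filtrations on $H^n(\Gr^W_mK_{\mathcal O})$ agree on all of $X$.
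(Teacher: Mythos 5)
Most of your proposal tracks the paper's own proof. The spectral--sequence backbone (identify $E_1^{p,q}(K_{\mathcal{O}},W)$ with ${}^\ell E_1^{p,q}(K_{\mathcal{O}},W)|_{X^*}$ via \eqref{eq:3}, transport $d_1^{p,q}$ across this identification by Lemma \ref{c-lem3.1}, use the splitting \eqref{eq:11} to exhibit $E_2^{p,q}(K_{\mathcal{O}},W)$ as a locally free direct factor, and inductively kill $d_r^{p,q}$ for $r\ge 2$) is exactly the paper's route to \ref{item:2} and to the graded isomorphisms. Your second half is also essentially the paper's: equating the complex-induced filtration with the Schmid extension of $F$ on $H^n(\Gr_m^WK_{\mathcal{O}})$ as two subbundle filtrations agreeing on $X^*$ is precisely how the paper (quoting \cite[Corollary 5.2]{fujino-fujisawa}) obtains the filtered isomorphism \eqref{eq:7} and hence the hypothesis \eqref{eq:15} of Lemma \ref{lem:1}, which yields \ref{item:10}. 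Your rank count for \ref{item:3} is a workable substitute for the paper's argument via strictness of the differentials and the lemma on two filtrations, since Lemma \ref{lem:1} supplies the local freeness of $H^n(\Gr_F^aK_{\mathcal{O}})$; note, however, that for \ref{item:5} the identification of $\Gr_F^a\Gr_m^WH^n(K_{\mathcal{O}})$ with $\Gr_m^WH^n(\Gr_F^aK_{\mathcal{O}})$, with $F$ the filtration induced on cohomology, is itself a two-filtrations statement (the paper proves it via $F=F_{\rec}$ on $E_2$) and should not be treated as something to ``read off''.

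The genuine gap is the graded-to-filtered upgrade in \ref{item:8}, and the mechanism you propose does not close it. First, \cite[Corollary 5.2]{fujino-fujisawa} compares two filtrations by subbundles of one and the same canonical extension; invoking it to identify the filtration $W$ on $H^n(K_{\mathcal{O}})$ with ${}^\ell W_\bullet$ presupposes an isomorphism $H^n(K_{\mathcal{O}})\simeq{}^\ell H^n(K_{\mathcal{O}})|_{X^*}$ restricting to the identity, which is exactly what you are trying to construct, so the argument is circular. Second, exactness of the functor ${}^\ell$ cannot repair the induction on $m$: an extension whose sub and quotient are canonical extensions, compatibly with restriction to $X^*$, need not itself be one. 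Take $X=\Delta$ with coordinate $t$, $\Sigma=\{0\}$, $j\colon \Delta^*\hookrightarrow\Delta$, and let $\mathcal{E}\subset j_*\mathcal{O}_{\Delta^*}^{\oplus 2}$ be the $\mathcal{O}_\Delta$-submodule generated by the sections $(1,0)$ and $(t^{-1},1)$; then $\mathcal{O}_\Delta\cdot(1,0)$ and $\mathcal{E}/\mathcal{O}_\Delta\cdot(1,0)$ are the canonical extensions of trivial rank-one local systems, compatibly with restriction, yet no isomorphism $\mathcal{E}\simeq\mathcal{O}_\Delta^{\oplus 2}={}^\ell\mathcal{O}_{\Delta^*}^{\oplus 2}$ can restrict to the identity on $\Delta^*$, since it would send the global section $(t^{-1},1)$ to a holomorphic one. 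Worse, this configuration is realized by a triple satisfying every stated hypothesis and \eqref{eq:3} (put $K_{\mathcal{O}}=\mathcal{E}$ in degree zero, $W_0K_{\mathcal{O}}=\mathcal{O}_\Delta\cdot(1,0)$, $W_1=W_0$, $W_2=\mathcal{E}$, $F^1K_{\mathcal{O}}=\mathcal{O}_\Delta\cdot(t^{-1},1)$, and $K_{\mathbb{R}}=\mathbb{R}^2_{\Delta^*}$ constant with constant Hodge structures of weights $0$ and $2$ on the graded pieces), so no purely formal argument from \eqref{eq:3} and the spectral data can prove the filtered part of \ref{item:8}; some additional rigidity must enter. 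This is also the one point where the paper's written proof is silent (it passes from the graded isomorphisms to \ref{item:8} without comment); in the intended application the rigidity is available because $H^n(K(\log)_{\mathcal{O}})$ carries the logarithmic Gauss--Manin connection coming from Lemma \ref{c-lem3.3}, which preserves $W$, so that its residues have the same eigenvalues as those on the graded pieces and the already-established local freeness forces $H^n(K(\log)_{\mathcal{O}})$ to be the lower canonical extension. Your proof needs this, or an equivalent input, made explicit.
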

\begin{proof}
We use the same notation as in the proof of Lemma \ref{lem:1}.

From \eqref{eq:3}, we have
\begin{equation}
\label{eq:5}
E_1^{p,q}(K_{\mathcal{O}},W)
\simeq {}^\ell E_1^{p,q}(K_{\mathcal{O}},W)|_{X^*}
\end{equation}
for every $p,q$.
In particular, $E_1^{p,q}(K_{\mathcal{O}},W)$
is locally free of finite rank.
Then, by Lemma \ref{c-lem3.1},
we have $d_1^{p,q}={}^\ell d_1^{p,q}|_{X^*}$
under the isomorphism \eqref{eq:5}
because $({}^\ell d_1^{p,q}|_{X^*})|_{X^*}=d_1^{p,q}|_{X^*}$.
Therefore
\begin{equation}
\label{eq:2}
E_2^{p,q}(K_{\mathcal{O}},W) \simeq {}^\ell E_2^{p,q}(K_{\mathcal{O}},W)|_{X^*}
\end{equation}
by \eqref{eq:11}.
In particular, $E_2^{p,q}(K_{\mathcal{O}},W)$ is locally free of finite rank
for every $p,q$.
Lemma \ref{c-lem3.1} implies $d_2^{p,q}=0$
because $d_2^{p,q}|_{X^*}=0$
by \ref{item:11} of Lemma \ref{lem:3}.
Inductively, $E_r^{p,q}(K_{\mathcal{O}},W)$ is locally free of finite rank
and $d_r^{p,q}=0$ for $r \ge 2$.
Thus we obtain \ref{item:2}.
We have
\begin{equation}
\Gr_m^WH^n(K_{\mathcal{O}})
\simeq
E_2^{-m,n+m}(K_{\mathcal{O}},W)
\simeq
{}^\ell E_2^{-m,n+m}(K_{\mathcal{O}},W)|_{X^*}
\simeq
{}^\ell(\Gr_m^WH^n(K_{\mathcal{O}})|_{X^*}),
\end{equation}
from which we obtain \ref{item:8}.

Next, we will prove the latter half of the theorem.
By the assumption \ref{item:4} on the whole $X$,
the sequence of the canonical morphisms
\begin{equation}
\label{eq:6}
0 \to H^n(F^{a+1}\Gr_m^WK_{\mathcal{O}})
\to H^n(F^a\Gr_m^WK_{\mathcal{O}})
\to H^n(\Gr_F^a\Gr_m^WK_{\mathcal{O}})
\to 0
\end{equation}
is exact for every $a,m,n$.
Then we have
\begin{equation}
\Gr_F^aH^n(\Gr_m^WK_{\mathcal{O}})
\simeq
H^n(\Gr_F^a\Gr_m^WK_{\mathcal{O}})
\end{equation}
for every $a,m,n$,
from which $\Gr_F^aH^n(\Gr_m^WK_{\mathcal{O}})$
is locally free of finite rank
by the assumption.
Thus $F^aH^n(\Gr_m^WK_{\mathcal{O}})$
turns out to be a subbundle for every $a$.
Since the extension of the filtration $F$
on $E_1^{p,q}(K_{\mathcal{O}},W)|_{X^*}$
is unique by \cite[Corollary 5.2]{fujino-fujisawa},
the isomorphism \eqref{eq:3}
induces an isomorphism
\begin{equation}
\label{eq:7}
(H^n(\Gr_m^WK_{\mathcal{O}}), F)
\simeq
({}^\ell H^n(\Gr_m^WK_{\mathcal{O}})|_{X^*}, F)
\end{equation}
of filtered $\mathcal{O}_X$-modules.
In particular, we have isomorphisms
\begin{equation}
H^n(\Gr_F^a\Gr_m^WK_{\mathcal{O}})
\simeq
\Gr_F^aH^n(\Gr_m^WK_{\mathcal{O}})
\simeq
\Gr_F^a({}^\ell H^n(\Gr_m^WK_{\mathcal{O}})|_{X^*})
\end{equation}
whose restriction to $X^*$
coincides with the natural isomorphism \eqref{eq:14}.
Thus we obtain \ref{item:10}
by Lemma \ref{lem:1}.
From \eqref{eq:7},
it turns out that
\eqref{eq:5} is, in fact,
an isomorphism
of filtered $\mathcal{O}_X$-modules
\begin{equation}
\label{eq:8}
(E_1^{p,q}(K_{\mathcal{O}},W), F)
\simeq ({}^\ell E_1^{p,q}(K_{\mathcal{O}},W)|_{X^*}, F),
\end{equation}
under which $d_1^{p,q}={}^\ell d_1^{p,q}|_{X^*}$.
Therefore $d_1^{p,q}$ is strictly compatible with $F$
by the direct sum decomposition \eqref{eq:11}.
The assumption \ref{item:4}
implies that
\begin{equation}
d_0^{p,q}
\colon E_0^{p,q}(K_{\mathcal{O}},W)
\to E_0^{p,q+1}(K_{\mathcal{O}},W)
\end{equation}
is strictly compatible with the filtration $F$.
By \ref{item:2}, we already have $d_r^{p,q}=0$ for all $r \ge 2$.
Thus the morphism $d_r^{p,q}$ is strictly compatible
with the filtration $F$ for every $p,q,r$.
Hence we conclude \ref{item:3}
by the lemma on two filtrations
(see e.g.~\cite[Proposition (7.2.8)]{DeligneIII}, 
\cite[Theorem 3.12, 3)]{Peters-SteenbrinkMHS}).

From \eqref{eq:8}
and the direct sum decomposition \eqref{eq:11},
the isomorphism \eqref{eq:2} induces
an isomorphism of filtered $\mathcal{O}_X$-modules
\begin{equation}
(E_2^{p,q}(K_{\mathcal{O}},W), F_{\rec})
\simeq ({}^\ell E_2^{p,q}(K_{\mathcal{O}},W)|_{X^*}, F)
\end{equation}
where $F_{\rec}$ denotes the inductive filtration
(la filtration r\'ecurrente in \cite[(1.3.11)]{DeligneII}).
On the other hand,
$F=F_{\rec}$ on $E_2^{p,q}(K_{\mathcal{O}},W)$
by the lemma on two filtrations again
(see e.g.~\cite[Proposition (7.2.5)]{DeligneIII}, 
\cite[Theorem 3.12, 1)]{Peters-SteenbrinkMHS}.
Thus we have
\begin{equation}
\Gr_F^aE_2^{p,q}(K_{\mathcal{O}},W)
\simeq
\Gr_{F_{\rec}}^aE_2^{p,q}(K_{\mathcal{O}},W)
\simeq
\Gr_F^a({}^\ell E_2^{p,q}(K_{\mathcal{O}},W)|_{X^*})
\end{equation}
for every $a,p,q$.
In particular
\begin{equation}
\Gr_F^aE_2^{p,q}(K_{\mathcal{O}},W)
\end{equation}
is locally free of finite rank for every $a,p,q$.
Moreover, the filtration $F$ on $E_2^{p,q}(K_{\mathcal{O}},W)$
coincides with the filtration $F$ on $\Gr_{-p}^WH^{p+q}(K_{\mathcal{O}})$
under the isomorphism
\begin{equation}
E_2^{p,q}(K_{\mathcal{O}},W)
\simeq
\Gr_{-p}^WH^{p+q}(K_{\mathcal{O}}) 
\end{equation}
by the lemma on two filtrations
(see e.g.~\cite[Theorem 3.12, 2)]{Peters-SteenbrinkMHS}).
Thus we obtain \ref{item:5} from
\begin{equation}
\Gr_F^aE_2^{-m,n+m}(K_{\mathcal{O}})
\simeq
\Gr_F^a\Gr_m^WH^n(K_{\mathcal{O}})
\end{equation}
for every $a,m,n$.
\end{proof}

\section{Proofs of Theorems \ref{z-thm1.1}, \ref{z-thm1.3}, and \ref{z-thm1.4}}
\label{c-sec3}

In this section, we will prove Theorems \ref{z-thm1.1}, 
\ref{z-thm1.3}, and 
\ref{z-thm1.4}. 
As already mentioned in Remark \ref{z-rem1.6}, 
our approach to 
Theorem \ref{z-thm1.1} (ii)--(iv) here is different from
\cite{fujino-fujisawa} (see also \cite[Section 13]{fujino-basic-slc}) 
because we do not assume that $(X, D)$ is projective over $Y$
in this section.
We use the notation in \cite[Section 4]{fujino-fujisawa}.

\begin{say}
\label{para:3} 
First, we briefly recall several constructions and results
in \cite[Section 4]{fujino-fujisawa},
which are necessary for the proofs of Theorems \ref{z-thm1.1}
and \ref{z-thm1.4}.

Let $f\colon (X, D)\to Y$ be as in Theorems 
\ref{z-thm1.1} and \ref{z-thm1.4}. 
Let 
\begin{equation*}
X=\bigcup_{i \in I}X_i \quad {\text{and}} \quad 
D=\bigcup_{\lambda \in \Lambda}D_{\lambda}
\end{equation*} 
be the irreducible decompositions of $X$ and $D$, respectively.
Fixing orders $<$ on $\Lambda$ and $I$,
we put 
\begin{equation*}
D_k \cap X_l
=\coprod_{\substack{\lambda_0< \lambda_1 < \cdots < \lambda_k \\
            i_0 < i_1 < \cdots < i_l}}
D_{\lambda_0} \cap D_{\lambda_1} \cap \cdots \cap D_{\lambda_k}
\cap X_{i_0} \cap X_{i_1} \cap \cdots \cap X_{i_l}
\end{equation*}
for $k,l \ge 0$ (see \cite[4.14]{fujino-fujisawa}). 
Here we use the convention 
\begin{align*}
&D_k=D_k \cap X_{-1}
=\coprod_{\lambda_0 < \lambda_1 < \cdots < \lambda_k}
D_{\lambda_0} \cap D_{\lambda_1} \cap \cdots \cap D_{\lambda_k} \\
&X_l=D_{-1} \cap X_l
=\coprod_{i_0 < i_1 < \cdots < i_l}
X_{i_0} \cap X_{i_1} \cap \cdots \cap X_{i_l}
\end{align*}
for $k,l \ge 0$. 
By setting
\begin{equation}
(X, D)_n
:=(D \cap X)_n \setminus D_n
=\coprod_{\substack{k+l+1=n \\ l \ge 0}}D_k \cap X_l,
\end{equation}
we obtain an augmented semisimplicial variety
$\varepsilon \colon (X, D)_{\bullet} \to X$.
Note that $(X, D)_n$ is
the disjoint union of all the strata of $(X,D)$
of dimension $\dim X-n$
for all $n \in \mathbb{Z}_{\ge 0}$.
We set $f_n:=f \varepsilon_n \colon (X, D)_n \to Y$
for every $n$.
Then $f_n$ is smooth over $Y^*=Y \setminus \Sigma$.
Then the complex $\varepsilon_*\mathbb{R}_{(X, D)_{\bullet}}$
is given by
\begin{equation*}
(\varepsilon_*\mathbb{R}_{(X, D)_{\bullet}})^n
=(\varepsilon_n)_*\mathbb{R}_{(X, D)_n}
=\bigoplus_{l \ge 0}\mathbb{R}_{D_{n-l-1} \cap X_l}
\end{equation*}
with the \v{C}ech type morphism $\delta$ as the differential.
Note that this complex is the single complex associated to
the double complex obtained
by deleting the first vertical column
of the double complex in \cite[p.626, 4.14]{fujino-fujisawa},
and by replacing $\mathbb{Q}$ with $\mathbb{R}$.
Then we have quasi-isomorphisms
\begin{equation*}
i_!\mathbb{R}_{X \setminus D}
\xrightarrow{\simeq}
(\mathbb{R}_X
\to
\mathbb{R}_{D_0}
\xrightarrow{\delta}
\mathbb{R}_{D_1}
\xrightarrow{\delta}
\cdots) \\
\xrightarrow{\simeq}
\varepsilon_*\mathbb{R}_{(X, D)_{\bullet}}
\end{equation*}
from the double complex in \cite{fujino-fujisawa} mentioned above,
where $i$ denotes 
the open immersion $X \setminus D \hookrightarrow X$.
By setting
\begin{equation*}
L_m(\varepsilon_*\mathbb{R}_{(X, D)_{\bullet}})^n
=
\begin{cases}
0 \quad &n < -m \\
(\varepsilon_n)_*\mathbb{R}_{(X, D)_n} \quad &n \ge -m
\end{cases}
\end{equation*}
a finite increasing filtration $L$ is defined
on $\varepsilon_*\mathbb{R}_{(X, D)_{\bullet}}$.
We have the relative de Rham complex
$\Omega_{(X, D)_{\bullet}/Y}$
for the morphism $f \varepsilon \colon (X, D)_{\bullet} \to Y$.
Then the complex
$\varepsilon_*\Omega_{(X, D)_{\bullet}/Y}$
is given by
\begin{equation}
\label{eq-total-deRham}
(\varepsilon_*\Omega_{(X, D)_{\bullet}/Y})^n
=\bigoplus_{k \ge 0}
(\varepsilon_k)_*\Omega^{n-k}_{(X, D)_k/Y}
\end{equation}
with the differential
$\delta+(-1)^kd$ on $(\varepsilon_k)_*\Omega^{n-k}_{(X, D)_k/Y}$,
where $\delta$ denotes the \v{C}ech type morphism for $(X, D)_{\bullet}$
and $d$ denotes the differential of the relative de Rham complex
$\Omega_{(X, D)_n/Y}$.
By setting
\begin{gather}
L_m(\varepsilon_*\Omega_{(X, D)_{\bullet}/Y})^n
=\bigoplus_{k \ge -m}
(\varepsilon_k)_*\Omega^{n-k}_{(X, D)_k/Y}
\label{eq-filtration-L} \\
F^p(\varepsilon_*\Omega_{(X, D)_{\bullet}/Y})^n
=\bigoplus_{0 \le k \le n-p}
(\varepsilon_k)_*\Omega^{n-k}_{(X, D)_k/Y},
\label{eq-filtration-F}
\end{gather}
a finite increasing filtration $L$
and a finite decreasing filtration $F$
on $\varepsilon_*\Omega_{(X, D)_{\bullet}/Y}$
are defined.
The canonical morphism
$\mathbb{R}_{(X, D)_n} \to \mathcal{O}_{(X,D)_n}$
induces a morphism of complexes
$\iota \colon \varepsilon_*\mathbb{R}_{(X,D)_{\bullet}}
\to \varepsilon_*\Omega_{(X, D)_{\bullet}/Y}$.

By setting
\begin{equation}
\label{eq:1}
\begin{split}
K
&=((K_{\mathbb{R}}, L), (K_{\mathcal{O}}, L, F), \alpha) \\
&=((Rf_*\varepsilon_*\mathbb{R}_{(X, D)_{\bullet}}, L)|_{Y^*},
(Rf_*\varepsilon_*\Omega_{(X, D)_{\bullet}/Y}, L, F),
Rf_*\iota|_{Y^*})
\end{split}
\end{equation} 
(see \cite[4.1]{fujino-fujisawa}), 
we obtain a triple $K$ as in \ref{para:4}
satisfying the following:
\begin{enumerate}
\item
\label{qis-Rstr}
There exists a quasi-isomorphism
$R(f|_{X^* \setminus D^*})_{!}\mathbb{R}_{X^* \setminus D^*}
\simeq K_{\mathbb{R}}$.
\item
\label{qis-O(-D)}
There exists a quasi-isomorphism
$\Gr_F^pK_{\mathcal{O}}
\simeq
Rf_*\varepsilon_*\Omega^p_{(X, D)_{\bullet}/Y}[-p]$.
for every $p$.
In particular, $Rf_*\mathcal{O}_X(-D) \simeq \Gr_F^0K_{\mathcal{O}}$.
\item
\label{b-gr_m}
For every $m \in \mathbb{Z}$,
\begin{equation*}
\begin{split}
\Gr_m^LK
&=(\Gr_m^LK_{\mathbb{R}}, (\Gr_m^LK_{\mathcal{O}}, F), \Gr_m^L\alpha) \\
&\simeq
\bigoplus_S
(R(f_{S^*})_*\mathbb{R}_{S^*}[m],
(R(f_S)_*\Omega_{S/Y}[m], F),
R(f_{S^*})_*\iota_{S^*}[m]),
\end{split}
\end{equation*}
where
$S$ runs through all $(\dim X+m)$-dimensional
strata of $(X, D)$, $f_S=f|_S$, 
$S^*=f_S^{-1}(Y^*)$,
$f_{S^*}=(f_S)|_{S^*}$,
and $\iota_{S^*}$ is the composite
$\mathbb{R}_{S^*}
\hookrightarrow \mathbb{C}_{S^*}
\to \Omega_{S^*/Y^*}$.
\end{enumerate} 
\end{say}

First, we will prove Theorem \ref{z-thm1.4}.

\begin{proof}[Proof of Theorem \ref{z-thm1.4}] 
We use the notations and terminologies 
in \ref{para:3}.
We will prove that
the spectral sequence
\begin{equation}
\label{b-eq-ss1}
E_r^{p,q}(\Gr_F^0K_{\mathcal{O}}, L)
\Rightarrow
H^{p+q}(\Gr_F^0K_{\mathcal{O}})
\end{equation}
associated to the filtered complex
$(\Gr_F^0K_{\mathcal{O}}, L)$
satisfies the desired properties.

By \ref{qis-O(-D)},
we have an isomorphism
\begin{equation}
H^{p+q}(\Gr_F^0K_{\mathcal{O}})
\simeq
R^{p+q}f_*\mathcal{O}_X(-D)
\end{equation}
as desired.

By \ref{b-gr_m},
$K$ satisfies
\ref{item:7}--\ref{item:9}
because $f_S \colon S \to Y$ is smooth over $Y^*$
for every stratum $S$ of $(X, D)$.
Moreover, we have
\begin{equation}
H^n(\Gr_F^0\Gr_m^LK_{\mathcal{O}})
\simeq
\bigoplus_S
R^{n+m}(f_S)_*\mathcal{O}_S
\end{equation}
where
$S$ runs through all $(\dim X+m)$-dimensional
strata of $(X, D)$.
Therefore $K$ satisfies the assumption of Lemma \ref{lem:1}
by the dual of Theorem \ref{z-thm2.14}.
Thus we obtain the conclusion by Lemma \ref{lem:1}.
\end{proof}

Next, we will prove Theorem \ref{z-thm1.1} (i).

\begin{proof}[{Proof of Theorem \ref{z-thm1.1} {\em{(i)}}}]
As already mentioned above,
$K$ satisfies the conditions
\ref{item:7}--\ref{item:9}.
By applying Lemma \ref{lem:3}
together with \ref{item:14}--\ref{item:15},
the triple
\begin{equation}
(\Gr_m^LH^k(K_{\mathbb{R}}),
(\Gr_m^LH^k(K_{\mathcal{O}}), F)|_{Y^*},
\Gr_m^LH^k(\alpha))
\end{equation}
is a polarizable variation of $\mathbb{R}$-Hodge structure
of weight $k+m$ for every $k,m$.
By \ref{qis-Rstr}, we have
$R^k(f|_{X^* \setminus D^*})_{!}\mathbb{R}_{X^* \setminus D^*}
\simeq H^k(K_{\mathbb{R}})$,
which implies
$\mathcal{V}^k_{Y^*} \simeq H^k(K_{\mathcal{O}})|_{Y^*}$
as in \ref{item:16}
for all $k$.
By using these isomorphisms,
we introduce filtrations
$L$ on $R^k(f|_{X^* \setminus D^*})_{!}\mathbb{R}_{X^* \setminus D^*}$
and $\mathcal{V}^k_{Y^*}$,
$F$ on $\mathcal{V}^k_{Y^*}$,
and then obtain
a polarizable variation of $\mathbb{R}$-Hodge structure
\begin{equation}
(\Gr_m^{L[k]}R^k(f|_{X^* \setminus D^*})_{!}\mathbb{R}_{X^* \setminus D^*},
(\Gr_m^{L[k]}\mathcal{V}^k_{Y^*},F), 
\Gr_m^{L[k]}\beta)
\end{equation}
of weight $m$ on $Y^*$ for every $k,m$, 
where the natural morphism
$R^k(f|_{X^* \setminus D^*})_{!}\mathbb{R}_{X^* \setminus D^*}
\to
\mathcal{V}^k_{Y^*}
=R^k(f|_{X^* \setminus D^*})_{!}\mathbb{R}_{X^* \setminus D^*}
\otimes \mathcal{O}_{Y^*}$
is denoted by $\beta$.

On the other hand,
the Griffiths transversality for
\begin{equation}
(\mathcal{V}^k_{Y^*}, F)
\simeq
(H^k(K_{\mathcal{O}}), F)|_{Y^*}
\simeq
(R^kf_*\varepsilon_*\Omega_{(X, D)_{\bullet}/Y}, F)|_{Y^*}
\end{equation}
can be easily seen
by the same way as in the proof of Lemma 4.5 of \cite{fujino-fujisawa}.
Therefore
\begin{equation}
((R^k(f|_{X^* \setminus D^*})_{!}\mathbb{R}_{X^* \setminus D^*},L[k]),
(\mathcal{V}^k_{Y^*},L[k],F), \beta)
\end{equation}
is a graded polarizable variation
of $\mathbb{R}$-mixed Hodge structure on $Y^*$ as desired.
\end{proof}

In order to prove Theorem \ref{z-thm1.1} (ii)--(iv),
we recall results in \cite{steenbrink1} and \cite{steenbrink2}
in a slightly generalized form.

\begin{defn}\label{c-def3.2}
Let $f \colon X \to Y$ be a surjective morphism of smooth complex varieties
and $\Sigma$ a simple normal crossing divisor on $Y$.
We assume that $E=(f^*\Sigma)_{\red}$ is
a simple normal crossing divisor on $X$.
For such $f$,
we set
\begin{equation*}
\Omega^1_{X/Y}(\log E)
=
\coker(f^*\Omega^1_Y(\log \Sigma) \to \Omega^1_X(\log E))
\end{equation*}
and
\begin{equation*}
\Omega^p_{X/Y}(\log E)=\bigwedge^p\Omega^1_{X/Y}(\log E)
\end{equation*}
for every $p$.
An $f^{-1}\mathcal{O}_Y$-differential
$d \colon \Omega^p_{X/Y}(\log E) \to \Omega^{p+1}_{X/Y}(\log E)$
can be uniquely defined by the commutative diagram 
\begin{equation*}
\xymatrix{
\Omega^p_X(\log E) \ar[r]\ar[d]_-d& \Omega^p_{X/Y}(\log E)\ar[d]^-d \\
\Omega^{p+1}_X(\log E) \ar[r]& \Omega^{p+1}_{X/Y}(\log E),
}
\end{equation*} 
where the horizontal arrows are the canonical surjections
induced from the surjection
$\Omega^1_X(\log E) \to \Omega^1_{X/Y}(\log E)$.
Thus we obtain a complex of $f^{-1}\mathcal{O}_Y$-modules
$\Omega_{X/Y}(\log E)$,
which is called the relative log de Rham complex of $f$.
\end{defn}

\begin{lem}
\label{c-lem3.3}
Let $f \colon X \to Y$ be a proper surjective morphism
from a K\"ahler manifold $X$ to a smooth complex variety $Y$.
Assume that
there exists a smooth divisor $\Sigma$ on $Y$ such that
\begin{enumerate}
\item
\label{St2-smooth-overY*}
$f$ is smooth over $Y^*=Y \setminus \Sigma$,
\item
\label{St2-snc}
$E=(f^*\Sigma)_{\red}$ is a simple normal crossing divisor on $X$
having finitely many irreducible components, and
\item
\label{log-smoothness}
$\Omega_{X/Y}^1(\log E)$
is a locally free $\mathcal{O}_X$-module
of finite rank.
\end{enumerate}
Then we have
\begin{equation*}
R^if_*\Omega_{X/Y}(\log E)
\simeq
{}^l(R^if_*\Omega_{X/Y}(\log E)|_{Y^*})
\simeq 
{}^l(\mathcal{O}_{Y^*} \otimes (R^if_*\mathbb{C}_X)|_{Y^*}) 
\end{equation*}
for all $i$,
where ${}^l(\cdot)$ stands for the lower canonical extension as before.
In particular, $R^if_*\Omega_{X/Y}(\log E)$
is a locally free $\mathcal{O}_Y$-module of finite rank
for all $i$.
Moreover, $R^if_*\Omega_{X/Y}^p(\log E)$
is also a locally free $\mathcal{O}_Y$-module of finite rank,
and the stupid filtration
$($filtration b\^ete in \cite[(1.4.7)]{DeligneII}$)$
$F$ on $\Omega_{X/Y}(\log E)$
induces the natural exact sequence
\begin{equation}
\label{eq-exact-steenbrink}
0 \to R^if_*F^{p+1}\Omega_{X/Y}(\log E)
\to R^if_*F^p\Omega_{X/Y}(\log E)
\to R^if_*\Omega^p_{X/Y}(\log E)
\to 0
\end{equation}
for all $i, p$.
\end{lem}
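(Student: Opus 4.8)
The plan is to derive every assertion from Steenbrink's degeneration theory for the relative logarithmic de Rham complex, adapted to the present K\"ahler and log-smooth setting. Since local freeness, the canonical-extension property, and the exactness of \eqref{eq-exact-steenbrink} are all local on $Y$, and the lower canonical extension along a smooth divisor is itself a local construction along $\Sigma$, I would first shrink $Y$ so that $\Sigma$ is cut out by a single coordinate. Over $Y^*=Y\setminus\Sigma$ the morphism $f$ is smooth and $E=(f^*\Sigma)_{\red}$ is disjoint from $X^*:=f^{-1}(Y^*)$, so there $\Omega_{X/Y}(\log E)$ restricts to the ordinary relative de Rham complex $\Omega_{X^*/Y^*}$. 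The relative de Rham theorem for the smooth proper K\"ahler family $f|_{X^*}$ then gives $R^if_*\Omega_{X/Y}(\log E)|_{Y^*}\simeq \mathcal{O}_{Y^*}\otimes (R^if_*\mathbb{C}_X)|_{Y^*}$, compatibly with the Gauss--Manin connection; applying ${}^l(\cdot)$ to both sides yields the second isomorphism of the lemma. Thus only the behavior across $\Sigma$ remains.

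The core is to show that $R^if_*\Omega_{X/Y}(\log E)$ is locally free and coincides with the lower canonical extension of its restriction to $Y^*$, together with $E_1$-degeneration of the relative logarithmic Hodge--de Rham spectral sequence
\[
E_1^{p,q}=R^qf_*\Omega^p_{X/Y}(\log E)\Rightarrow R^{p+q}f_*\Omega_{X/Y}(\log E).
\]
This is exactly Steenbrink's result from \cite{steenbrink1} and \cite{steenbrink2}, which I would invoke in the generalized form explained in Remark \ref{c-rem3.4} (following \cite{nakayama2}), so that it applies to proper morphisms from K\"ahler manifolds and not only to projective ones. Condition \ref{log-smoothness} guarantees that $f\colon (X,E)\to (Y,\Sigma)$ is log smooth, so locally $f$ has the form $t=z_1^{a_1}\cdots z_k^{a_k}$ with the multiplicities $a_i$ having no common factor; if one prefers to reduce to the strictly semistable case originally treated by Steenbrink, this can be arranged by a ramified base change $t=s^N$ followed by normalization and resolution, after which the log de Rham cohomology of the original family is recovered by descent, using the compatibility of the lower canonical extension with such covers. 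Steenbrink's construction endows $Rf_*\Omega_{X/Y}(\log E)$ with the $\mathcal{O}$-component of a cohomological mixed Hodge complex whose limiting mixed Hodge structure, via Schmid's nilpotent orbit theorem \cite{schmid}, identifies the limit Hodge filtration with the subbundle filtration on the canonical extension, while the polarizability of the graded pieces forces the degeneration at $E_1$.

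Finally, from $E_1$-degeneration I would read off the last two assertions. Degeneration means that the filtration induced by the stupid filtration $F$ on $R^if_*\Omega_{X/Y}(\log E)$ has graded quotients isomorphic to the terms $E_1^{p,i-p}=R^{i-p}f_*\Omega^p_{X/Y}(\log E)$, and that the connecting maps of the exact triangles $F^{p+1}\Omega_{X/Y}(\log E)\to F^p\Omega_{X/Y}(\log E)\to \Gr_F^p\Omega_{X/Y}(\log E)=\Omega^p_{X/Y}(\log E)[-p]$ vanish; the associated long exact sequences therefore break into the short exact sequences \eqref{eq-exact-steenbrink}. Since $R^if_*\Omega_{X/Y}(\log E)$ is locally free and each $R^if_*F^p\Omega_{X/Y}(\log E)$ is a subbundle of it (the steps splitting off by degeneration), the quotients $R^qf_*\Omega^p_{X/Y}(\log E)$ are locally free of finite rank as well. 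The hard part is not this formal bookkeeping but the previous step: transporting Steenbrink's argument, written for projective semistable degenerations, to the K\"ahler and merely log-smooth situation — that is, checking that the limiting mixed Hodge structure, the $E_1$-degeneration, and the canonical-extension identification all persist once projectivity is weakened to the K\"ahler hypothesis and $f^*\Sigma$ is allowed to be non-reduced with $E=(f^*\Sigma)_{\red}$.
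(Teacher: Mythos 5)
Your skeleton (localize on $Y$; classical degeneration over $Y^*$; Steenbrink across $\Sigma$; then read off \eqref{eq-exact-steenbrink} and local freeness of the graded pieces from $E_1$-degeneration) matches the paper's, and your final formal step is exactly how the paper concludes. The gap is in the core step. The Steenbrink results you invoke ((2.18) Theorem and (2.20) Proposition of \cite{steenbrink1}, (2.11) Theorem of \cite{steenbrink2}) are statements about a proper family over the \emph{one-dimensional disc} $\Delta$, and Remark \ref{c-rem3.4} enlarges their scope only from projective to K\"ahler for such one-parameter families; none of this applies verbatim to a base $Y$ with $\dim Y \ge 2$, which is the case the lemma must cover (it is applied later over $Y_0=Y \setminus \Sigma_0$ with $Y$ of arbitrary dimension). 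Your own local model ``$t=z_1^{a_1}\cdots z_k^{a_k}$'' silently assumes a one-dimensional base, so you have mislocated the difficulty: it is not the K\"ahler/non-reduced issue (that is disposed of by Remark \ref{c-rem3.4}), but the dimension of the base. The paper's proof spends essentially all of its effort on exactly this reduction: hypothesis \ref{log-smoothness} is used, via the rank of the matrix \eqref{eq-matrix-representation}, to produce coordinates in which $f$ takes the product form \eqref{eq-local-description}, namely $t_1=x_1^{a_1}\cdots x_l^{a_l}$, $t_j=x_{l+j-1}$ for $j \ge 2$; one then restricts to the slices $X_s=f^{-1}(\Delta \times \{s\})$, applies Steenbrink's one-variable theorems (in the K\"ahler form of Remark \ref{c-rem3.4}) to each $f_s \colon X_s \to \Delta$, and deduces local freeness of $R^if_*\Omega_{X/Y}(\log E)$ and $R^if_*\Omega^p_{X/Y}(\log E)$ over all of $Y$ by cohomology and base change. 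The canonical-extension identification then follows from (2.20) Proposition of \cite{steenbrink1}, and $E_1$-degeneration over all of $Y$ follows from the elementary density argument (Lemma \ref{c-lem3.1}): the differentials are morphisms of locally free sheaves vanishing on $Y^*$, hence vanish, inductively in $r$. Without the product structure and the slicing, your appeal to ``Schmid plus polarizability forces $E_1$-degeneration'' has no theorem standing behind it over a higher-dimensional base.

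Two further points. First, hypothesis \ref{log-smoothness} does not imply that $\gcd(a_1,\dots,a_l)=1$ (for instance $t=x^2$ is log smooth in this sense), so that parenthetical claim should be deleted; you never use it. Second, your fallback reduction to the semistable case --- base change $t=s^N$, normalization, resolution, then recovering the original cohomology ``by descent, using the compatibility of the lower canonical extension with such covers'' --- is unjustified and, as stated, incorrect: lower canonical extensions are compatible with ramified covers only when the local monodromy is unipotent (otherwise the residue eigenvalues are multiplied by $N$ and leave the normalizing interval), whereas in the present non-reduced situation the monodromy is in general only quasi-unipotent. The detour is also unnecessary, since a non-reduced fibre with simple normal crossing support is precisely the situation that (2.11) Theorem of \cite{steenbrink2} treats directly.
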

\begin{proof}
We may assume
$Y=\Delta^k$ with the coordinates $t_1, \dots, t_k$
and $\Sigma=\{t_1=0\}$. 
For any $x \in E$,
we can take local coordinates $x_1, \dots, x_n$
centered at $x$ on $X$
with 
\begin{equation}
f^*t_1=x_1^{a_1} \cdots x_l^{a_l}
\end{equation}
for some $a_1, \dots, a_l \in \mathbb{Z}_{>0}$
by \ref{St2-snc}.
We set $f_i=f^*t_i$ for $i=2, \dots, k$.
On the other hand,
we have the canonical exact sequence
\begin{equation}
\label{eq-exact-seq-logsmooth}
0 \to f^*\Omega_Y^1(\log \Sigma)_x \otimes \mathbb{C}(x)
\to \Omega_X^1(\log E)_x \otimes \mathbb{C}(x)
\to \Omega_{X/Y}^1(\log E)_x \otimes \mathbb{C}(x)
\to 0,
\end{equation}
where $\mathbb{C}(x)$ denotes the residue field at $x$,
because $\Omega_{X/Y}^1(\log E)$ is a locally free $\mathcal{O}_X$-module
of rank $\dim X-\dim Y$
by \ref{log-smoothness}.
Under the isomorphisms
\begin{gather}
\Omega^1_Y(\log \Sigma)
\simeq
\mathcal{O}_Y \frac{dt_1}{t_1} \oplus (\bigoplus_{i=2}^k \mathcal{O}_Y dt_i), \\
\Omega^1_X(\log E)
\simeq
(\bigoplus_{i=1}^l \mathcal{O}_X \frac{dx_i}{x_i})
\oplus
(\bigoplus_{i=l+1}^n \mathcal{O}_X dx_i)
\end{gather}
the morphism
$f^*\Omega_Y^1(\log \Sigma)_x \otimes \mathbb{C}(x)
\to \Omega_X^1(\log E)_x \otimes \mathbb{C}(x)$
is represented by the matrix
\begin{equation}
\label{eq-matrix-representation}
\left(
\begin{array}{ccc|ccc}
a_1 & \dots & a_l & 0 & \dots & 0 \\ \hline
0 & \dots & 0 & & & \\
\vdots & \ddots & \vdots &
\multicolumn{3}{c}{
{\displaystyle
\frac{\partial f_i}{\partial x_j}(0)
}
} \\
0 & \dots & 0 & & &
\end{array}
\right)
\end{equation}
where $i$ and $j$ run through
$2, \dots, k$ and $l+1, \dots, n$ respectively.
The exactness of 
\eqref{eq-exact-seq-logsmooth}
implies
that the matrix \eqref{eq-matrix-representation} is of rank $k$,
and then we may assume
\begin{equation}
\rank
\left(
\frac{\partial f_i}{\partial x_j}(0)
\right)_{2 \le i \le k, l+1 \le j \le l+k-1}
=k-1
\end{equation}
by changing the order of $x_{l+1}, \dots, x_n$.
Replacing $x_{l+1}, \dots, x_{l+k-1}$ by $f_2, \dots, f_k$,
we obtain a new local coordinates
$(x_1, \dots, x_n)$
at $x$,
under which the morphism $f$ is given in the form
\begin{equation}
\label{eq-local-description}
t_1=x_1^{a_1} \cdots x_l^{a_l},
t_2=x_{l+1}, \dots, t_k=x_{l+k-1}
\end{equation}
around $x$. 
We set $f_s \colon X_s \to \Delta=\Delta \times \{s\}$
by the Cartesian square 
\begin{equation*}
\xymatrix{
X_s \ar[r]\ar[d]_-{f_s}&\ar[d] X\ar[d] \\
\Delta \ar[r]& Y
}
\end{equation*} 
for any $s=(t_2, \dots, t_k) \in \Delta^{k-1}$.
Then $X_s$ is smooth,
$f_s$ is smooth over $\Delta^*=\Delta \setminus \{0\}$
and $\Supp f_s^{-1}(0)$ is a simple normal crossing divisor on $X_s$
by the local description \eqref{eq-local-description}.
Hence $R^i(f_s)_*\Omega_{X_s/\Delta}(\log (E \cap X_s))$
and $R^i(f_s)_*\Omega^p_{X_s/\Delta}(\log (E \cap X_s))$
are locally free of finite rank
for every $i, p$ by \cite[(2.18) Theorem]{steenbrink1}
and by \cite[(2.11) Theorem]{steenbrink2}.
Therefore
$R^if_*\Omega_{X/Y}(\log E)$
and $R^if_*\Omega_{X/Y}^p(\log E)$
are locally free $\mathcal{O}_Y$-modules of finite rank
for all $i, p$ by the base change theorem.
Once we know that $R^if_*\Omega_{X/Y}(\log E)$
is locally free,
it is the lower canonical extension
of its restriction to $Y^*=Y \setminus \Sigma$
by \cite[(2.20) Proposition]{steenbrink1}.

Next, we consider the spectral sequence
\begin{equation}
\label{eq-ss-steenbrink}
E_r^{p,q}(Rf_*\Omega_{X/Y}(\log E), F)
\Rightarrow
E^{p+q}(Rf_*\Omega_{X/Y}(\log E))=R^{p+q}f_*\Omega_{X/Y}(\log E)
\end{equation}
and denote the morphism of $E_r$-terms by
\begin{equation*}
d_r^{p,q} \colon E_r^{p,q}(Rf_*\Omega_{X/Y}(\log E), F)
\to E_r^{p+r,q-r+1}(Rf_*\Omega_{X/Y}(\log E), F)
\end{equation*}
for a while.
Then $d_r^{p,q}|_{Y^*}=0$ for all $p,q$ and $r \ge 1$
because the restriction of this spectral sequence to $Y^*$
degenerates at $E_1$-terms.
Since
\begin{equation*}
E_1^{p,q}(Rf_*\Omega_{X/Y}(\log E), F)
\simeq
R^qf_*\Omega^p_{X/Y}(\log E)
\end{equation*}
is a locally free $\mathcal{O}_Y$-module of finite rank
for all $p,q$,
we have $d_1^{p,q}=0$ for all $p,q$ by Lemma \ref{c-lem3.1}.
This implies that
\begin{equation*}
E_2^{p,q}(Rf_*\Omega_{X/Y}(\log E), F)
\simeq
E_1^{p,q}(Rf_*\Omega_{X/Y}(\log E), F)
\end{equation*}
is locally free for all $p,q$
and that $d_2^{p,q}=0$ for all $p,q$ by Lemma \ref{c-lem3.1} again.
Inductively, we obtain $d_r^{p,q}=0$ for all $p,q$ and $r \ge 1$.
Thus the spectral sequence \eqref{eq-ss-steenbrink}
degenerates at $E_1$-terms,
or equivalently,
\eqref{eq-exact-steenbrink} is exact.
\end{proof}

\begin{rem}
\label{c-rem3.4}
In \cite{steenbrink2}, 
$f_s$ is assumed to be a projective morphism.
However, we can check that the proof of (2.11) Theorem
in \cite{steenbrink2}
is also valid to a proper morphism from a K\"ahler manifold
by using results in
\cite[I.2.5 Almost K\"ahler $V$-manifolds]{Peters-SteenbrinkMHS}. 
See also Theorem \ref{c-thm6.9} below. 
\end{rem}

\begin{cor}
\label{c-cor3.5}
In the situation of Lemma \ref{c-lem3.3},
we have the canonical isomorphisms
\begin{align*}
&R^if_*F^p\Omega_{X/Y}(\log E)
\simeq
F^pR^if_*\Omega_{X/Y}(\log E), \\
&R^if_*\Omega^p_{X/Y}(\log E)
\simeq
\Gr_F^pR^if_*\Omega_{X/Y}(\log E)
\end{align*}
for all $i, p$.
In particular,
$F^pR^if_*\Omega_{X/Y}(\log E)$
is a subbundle of
$R^if_*\Omega_{X/Y}(\log E)$.
\end{cor}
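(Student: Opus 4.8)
The plan is to obtain both isomorphisms as purely formal consequences of the exactness of \eqref{eq-exact-steenbrink}, which was the main content of Lemma \ref{c-lem3.3} and which is equivalent to the $E_1$-degeneration of the spectral sequence \eqref{eq-ss-steenbrink}. Recall that the filtration $F$ on the abutment is the first direct filtration of \cite[(1.3.8)]{DeligneII}, so that by definition $F^pR^if_*\Omega_{X/Y}(\log E)=\image\bigl(R^if_*F^p\Omega_{X/Y}(\log E)\to R^if_*\Omega_{X/Y}(\log E)\bigr)$, where the morphism is induced by the inclusion of complexes $F^p\Omega_{X/Y}(\log E)\hookrightarrow\Omega_{X/Y}(\log E)$. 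Hence the first asserted isomorphism is equivalent to the injectivity of this morphism.

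First I would prove that injectivity. The left exactness in \eqref{eq-exact-steenbrink} says precisely that $R^if_*F^{p+1}\Omega_{X/Y}(\log E)\to R^if_*F^p\Omega_{X/Y}(\log E)$ is injective for all $i,p$. Since the stupid filtration is finite, with $F^0\Omega_{X/Y}(\log E)=\Omega_{X/Y}(\log E)$ and $F^p\Omega_{X/Y}(\log E)=0$ for $p>\dim X-\dim Y$, composing these injections along $F^p\hookrightarrow F^{p-1}\hookrightarrow\cdots\hookrightarrow F^0$ shows that $R^if_*F^p\Omega_{X/Y}(\log E)\to R^if_*\Omega_{X/Y}(\log E)$ is injective, giving the first isomorphism. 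The second isomorphism then follows by reading off the cokernel: using the first isomorphism to regard $R^if_*F^p$ and $R^if_*F^{p+1}$ as the subsheaves $F^p$ and $F^{p+1}$ of $R^if_*\Omega_{X/Y}(\log E)$, the exact sequence \eqref{eq-exact-steenbrink} identifies $R^if_*\Omega^p_{X/Y}(\log E)$ with $F^pR^if_*\Omega_{X/Y}(\log E)/F^{p+1}R^if_*\Omega_{X/Y}(\log E)=\Gr_F^pR^if_*\Omega_{X/Y}(\log E)$.

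Finally, for the subbundle assertion I would invoke the local freeness already supplied by Lemma \ref{c-lem3.3}: each graded piece $\Gr_F^pR^if_*\Omega_{X/Y}(\log E)\simeq R^if_*\Omega^p_{X/Y}(\log E)$ is a locally free $\mathcal{O}_Y$-module of finite rank. Because the filtration is finite and an extension of one locally free sheaf of finite rank by another is again locally free (the relevant $\mathcal{E}xt^1$ vanishes, so the extension splits locally), the quotient $R^if_*\Omega_{X/Y}(\log E)/F^pR^if_*\Omega_{X/Y}(\log E)$, which carries a finite filtration with graded pieces $\Gr_F^j$ for $j<p$, is locally free; hence $F^pR^if_*\Omega_{X/Y}(\log E)$ is a subbundle. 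I do not expect a genuine obstacle here, as all the real work is done in Lemma \ref{c-lem3.3}; the only point demanding care is the identification of the image-defined filtration on the abutment with the subsheaf $R^if_*F^p\Omega_{X/Y}(\log E)$, which is exactly the injectivity handled first.
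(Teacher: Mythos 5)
Your proposal is correct and follows exactly the route the paper intends: the corollary is stated without proof precisely because it is the formal consequence of Lemma \ref{c-lem3.3} that you spell out, namely that the exactness of \eqref{eq-exact-steenbrink} (equivalently, the $E_1$-degeneration of \eqref{eq-ss-steenbrink}) forces the maps $R^if_*F^p\Omega_{X/Y}(\log E)\to R^if_*\Omega_{X/Y}(\log E)$ to be injective, identifying the first direct filtration with the subsheaves $R^if_*F^p$ and the graded pieces with $R^if_*\Omega^p_{X/Y}(\log E)$, whose local freeness (from Lemma \ref{c-lem3.3}) gives the subbundle claim. No gaps; your care about the image-defined filtration versus the subsheaf is exactly the right point to check.
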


\begin{lem}
\label{c-lem3.6}
Let $f \colon X \to Y$ be a proper surjective morphism
between smooth complex varieties.
Assume that there exists a smooth divisor $\Sigma$
such that 
\begin{itemize}
\item
$f$ is smooth over $Y^*=Y \setminus \Sigma$, and 
\item 
$E=(f^*\Sigma)_{\red}$ is a simple normal crossing divisor on $X$
having finitely many irreducible components. 
\end{itemize} 
Then there exists a closed analytic subset
$\Sigma_0 \subset \Sigma$
with $\dim \Sigma_0 \le \dim Y-2$,
such that $\Omega^1_{X/Y}(\log E)$
is locally free
on $f^{-1}(Y \setminus \Sigma_0)$.
\end{lem}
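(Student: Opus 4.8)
The plan is to locate exactly where the natural map
\[
\phi\colon f^*\Omega^1_Y(\log \Sigma)\to \Omega^1_X(\log E)
\]
of Definition \ref{c-def3.2} fails to be fibrewise injective, and to show that this locus maps into a subset of $\Sigma$ of codimension at least $2$ in $Y$. Since $\Omega^1_{X/Y}(\log E)=\coker\phi$ and both $f^*\Omega^1_Y(\log\Sigma)$ and $\Omega^1_X(\log E)$ are locally free of ranks $\dim Y$ and $\dim X$, the generic rank of $\phi$ equals $\dim Y$ (it is attained over $Y^*$, where $f$ is smooth), and hence on the open set where $\phi$ is fibrewise injective, that is, where $\rank(\phi\otimes\mathbb{C}(x))=\dim Y$, its cokernel $\Omega^1_{X/Y}(\log E)$ is locally free. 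Note first that $E$ and $f^{-1}(\Sigma)$ have the same support, so $E\cap f^{-1}(Y^*)=\emptyset$; thus the degeneracy locus of $\phi$ is contained in $E$, and over $f^{-1}(Y^*)$ local freeness already holds.

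The key step is a local computation at a point $x\in E$. Working in local coordinates as in the proof of Lemma \ref{c-lem3.3}, I may assume $Y=\Delta^k$ with $\Sigma=\{t_1=0\}$ and choose coordinates $x_1,\dots,x_n$ centred at $x$ with $f^*t_1=x_1^{a_1}\cdots x_l^{a_l}$, $a_i\in\mathbb{Z}_{>0}$, so that $\tfrac{dx_1}{x_1},\dots,\tfrac{dx_l}{x_l},dx_{l+1},\dots,dx_n$ is a frame of $\Omega^1_X(\log E)$. Let $S$ be the stratum of $E$ through $x$, say $S=\{x_1=\dots=x_r=0\}$ locally, so that $f$ maps $S$ into $\Sigma$. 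In the fibre at $x$ the image of $f^*\tfrac{dt_1}{t_1}=\sum_{i=1}^l a_i\tfrac{dx_i}{x_i}$ is the vector $(a_1,\dots,a_l,0,\dots,0)$, whose first entry $a_1$ is nonzero, while the images of $f^*dt_2,\dots,f^*dt_k$ all vanish in the slots $1,\dots,r$. Consequently the first vector is independent of the others, and after rescaling the remaining columns by the units $x_j(x)$ one identifies the span of the images of $f^*dt_2,\dots,f^*dt_k$ with the Jacobian of $f|_S\colon S\to\Sigma$ at $x$. This yields the identity
\[
\rank(\phi\otimes\mathbb{C}(x))=1+\rank\, d(f|_S)_x ,
\]
so $\phi$ is fibrewise injective at $x$ exactly when $f|_S$ is submersive onto $\Sigma$ at $x$, i.e. when $x$ is not a critical point of $f|_S\colon S\to\Sigma$.

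Now I assemble $\Sigma_0$. Since $E$ has finitely many irreducible components, it has finitely many closed strata $\overline{S}$, each smooth and with $f(\overline{S})\subset\Sigma$. For each such $\overline{S}$ let $C_{\overline{S}}\subset\overline{S}$ be the critical locus of $f|_{\overline{S}}\colon \overline{S}\to\Sigma$, a closed analytic subset, and set
\[
\Sigma_0=\bigcup_{\overline{S}} f(C_{\overline{S}}).
\]
By the proper mapping theorem each $f(C_{\overline{S}})$ is closed analytic in $\Sigma$, and by Sard's theorem its points are critical values of $f|_{\overline{S}}$, hence form a set of measure zero; being closed analytic it is therefore nowhere dense in $\Sigma$, so $\dim f(C_{\overline{S}})\le \dim\Sigma-1=\dim Y-2$. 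Thus $\Sigma_0$ is closed analytic with $\dim\Sigma_0\le\dim Y-2$. Finally, if $f(x)\notin\Sigma_0$ then either $x\notin E$, where local freeness already holds, or $x\in E$ lies in a stratum $S$ with closure $\overline{S}$ as above; since $f(x)\notin f(C_{\overline{S}})$ and the tangent spaces to $S$ and $\overline{S}$ agree at $x$, the point $x$ is not critical for $f|_{\overline{S}}$, so by the displayed identity $\phi$ is fibrewise injective at $x$ and $\Omega^1_{X/Y}(\log E)$ is locally free near $x$. The main obstacle is the bookkeeping in the local rank computation, namely correctly tracking the log poles so as to isolate the row coming from $f^*\tfrac{dt_1}{t_1}$ and to recognise the remaining block as the differential of $f|_S$, together with the passage from Sard's measure-zero statement to a genuine codimension bound, which relies on the analyticity of the critical image via the proper mapping theorem.
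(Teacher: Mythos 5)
Your proof is correct and takes essentially the same route as the paper's: the paper also builds $\Sigma_0$ stratum by stratum from the intersections $E_I$ of the components of $E$ (taking $f(E_I)$ when $E_I$ does not dominate $\Sigma$, and the locus over which $f|_{E_I}$ fails to be smooth when it does), and at a point $x$ over $Y \setminus \Sigma_0$ it runs the same local coordinate computation, concluding that smoothness of $f|_{E_I}$ at $x$ forces the matrix of $f^*\Omega^1_Y(\log \Sigma) \otimes \mathbb{C}(x) \to \Omega^1_X(\log E) \otimes \mathbb{C}(x)$ to have full rank $k$, whence the cokernel $\Omega^1_{X/Y}(\log E)$ is locally free near $x$. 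Your only departures are presentational---you treat dominant and non-dominant strata uniformly via critical loci and obtain the codimension bound from Sard's theorem plus Remmert's proper mapping theorem, where the paper reduces to irreducible $\Sigma$ and invokes generic smoothness---together with two harmless slips (your index $r$ should be $l$, and the ``rescaling by the units $x_j(x)$'' remark is unneeded since those values vanish at $x$).
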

\begin{proof}
We may assume that $\Sigma$ is irreducible.
Let $E=\sum_{i=1}^NE_i$
be the irreducible decomposition of $E$.
For a 
nonempty subset $I \subset \{1, \dots, N\}$,
we set
$E_I=\bigcap_{i \in I}E_i$,
which is a smooth closed subvariety of $X$.
If $f(E_I) \not= \Sigma$,
we set $\Sigma_I=f(E_I)$,
which is a closed analytic subset of $\Sigma$.
If $f(E_I)=\Sigma$, then
there exists a closed analytic subset $\Sigma_I \subsetneq \Sigma$
such that
$f|_{E_I} \colon E_I \to \Sigma$
is smooth over $\Sigma \setminus \Sigma_I$.
We are going to check that the closed analytic subset
\begin{equation*}
\Sigma_0
:=\bigcup_{\emptyset \not= I \subset \{1, \dots, N\}}\Sigma_I
\end{equation*}
satisfies the desired property.
We have $\Sigma_0 \not= \Sigma$, by definition.
Therefore $\dim \Sigma_0 \le \dim Y-2$
because $\Sigma$ is irreducible.
Then, it suffices to prove that
$\Omega^1_{X/Y}(\log E)$
is locally free on $f^{-1}(Y \setminus \Sigma_0)$.
A point $x \in E \cap f^{-1}(Y \setminus \Sigma_0)$
defines a 
nonempty 
subset $I \subset \{1, \dots, N\}$ 
by $I=\{i \mid x \in E_i\}$.
Then $x \in E_I$ and $f(E_I)=\Sigma$.
Take local coordinates $x_1, \dots, x_n$ and $t_1, \dots, t_k$
centered at $x$ and $f(x)$ on $X$ and $Y$ respectively,
satisfying the following conditions:
\begin{itemize}
\item
$\Sigma=\{t_1=0\}$ on $Y$, and
\item
$f^*t_1=x_1^{a_1} \cdots x_l^{a_l}$
for some $a_1, \dots, a_l \in \mathbb{Z}_{>0}$.
\end{itemize}
We set $f_i=f^*t_i$ for $i=2, \dots, k$.
Then $E_I=\{x_1=\dots=x_l=0\}$
and the morphism
$(f|_{E_I})^*\Omega^1_{\Sigma} \to \Omega^1_{E_I}$
is represented by the matrix
\begin{equation*}
\left(
\frac{\partial f_i}{\partial x_j}(0, \dots, 0, x_{l+1}, \dots, x_n)
\right)_{2 \le i \le k, l+1 \le j \le n}
\end{equation*}
via the isomorphisms 
$(f|_{E_I})^*\Omega^1_{\Sigma} \simeq \bigoplus_{j=2}^k\mathcal{O}_{E_I} 
f^*dt_j$ and
$\Omega^1_{E_I} \simeq \bigoplus_{i=l+1}^n\mathcal{O}_{E_I} dx_i$. 
Since $x \in f^{-1}(\Sigma \setminus \Sigma_I)$,
the morphism $f|_{E_I}$ is smooth at $x$.
Then
\begin{equation*}
\rank
\left(
\frac{\partial f_i}{\partial x_j}(0)
\right)_{2 \le i \le k, l+1 \le j \le n}
=k-1,
\end{equation*}
which implies 
that the matrix \eqref{eq-matrix-representation}
in the proof of Lemma \ref{c-lem3.3}
is of rank $k$. 
Therefore the canonical morphism
$f^*\Omega^1_Y(\log \Sigma)_x \otimes \mathbb{C}(x)
\to \Omega^1_X(\log E)_x \otimes \mathbb{C}(x)$
is injective, 
by which we conclude
that $\Omega^1_{X/Y}(\log E)$ is locally free around $x$.
\end{proof}

\begin{say}
\label{c-say3.7}
We return to the situation in \ref{para:3}.
For the moment,
we assume that
there exist another semisimplicial variety
$Z_{\bullet}$
and a morphism of semisimplicial varieties
$\sigma \colon Z_{\bullet} \to (X, D)_{\bullet}$
satisfying the conditions
\begin{itemize}
\item 
$Z_n$ is smooth and K\"ahler, 
\item
$\sigma_n \colon Z_n \to (X, D)_n$
is a projective surjective morphism,
\item
for
$g_n:=f_n \sigma_n=f\varepsilon_n \sigma_n \colon Z_n \to Y$,
the divisor $E_n:=(g_n^*\Sigma)_{\red}$
is a simple normal crossing divisor on $Z_n$
having finitely many irreducible components, and
\item
$\sigma_n \colon Z_n \to (X, D)_n$
is isomorphic over $Y^*$
\end{itemize}
for every $n \in \mathbb{Z}_{\ge 0}$.
We obtain an augmentation $\eta \colon Z_{\bullet} \to X$
by setting $\eta=\varepsilon \sigma$. 
The relative log de Rham complex of $Z_n$ over $Y$
is denoted by $\Omega_{Z_n/Y}(\log E_n)$.
Then $\{\Omega_{Z_n/Y}(\log E_n)\}_{n \in \mathbb{Z}_{\ge 0}}$
forms a complex on the semisimplicial variety $Z_{\bullet}$.

For an augmentation of a semisimplicial variety,
we can define the direct image functor
as in \cite[4.1, 4.2]{fujino-fujisawa}
(for the detail, see e.g.~\cite[5.1, 5.2]{DeligneIII},
\cite[5.1.2]{Peters-SteenbrinkMHS}).
The complex $R\varepsilon_*\Omega_{(X, D)_{\bullet}}$
is isomorphic to
$\varepsilon_*\Omega_{(X, D)_{\bullet}}$ defined
in the proof of Theorem \ref{z-thm1.1} ({i})
in the derived category
because $\varepsilon_n \colon (X, D)_n \to X$ is a finite morphism
for all $n$.
On the other hand,
we obtain a complex
$R\eta_*\Omega_{Z_{\bullet}/Y}(\log E_{\bullet})$ on $X$.
Here, we briefly recall the definitions of this complex,
of the finite increasing filtration $L$, and
of the finite decreasing filtration $F$ on it.
First, the complex $R\eta_*\Omega_{Z_{\bullet}/Y}(\log E_{\bullet})$
is given as the total single complex
associated to the double complex 
\begin{equation*}
\xymatrix{
&\vdots \ar[d]& \vdots\ar[d] \\
\cdots \ar[r]& (R(\eta_p)_*\Omega_{Z_p/Y}(\log E_p))^q
\ar[r]^-{\delta}\ar[d]_-{(-1)^pd}& 
(R(\eta_{p+1})_*\Omega_{Z_{p+1}/Y}(\log E_{p+1}))^q \ar[r]
\ar[d]^-{(-1)^{p+1}d}& \cdots \\
\cdots \ar[r]& (R(\eta_p)_*\Omega_{Z_p/Y}(\log E_p))^{q+1}
\ar[r]^-{\delta}\ar[d]&
(R(\eta_{p+1})_*\Omega_{Z_{p+1}/Y}(\log E_{p+1}))^{q+1}\ar[r]
\ar[d]&\cdots \\
& \vdots & \vdots
}
\end{equation*} 
that is,
\begin{equation*}
(R\eta_*\Omega_{Z_{\bullet}/Y}(\log E_{\bullet}))^n
=\bigoplus_p(R(\eta_p)_*\Omega_{Z_p/Y}(\log E_p))^{n-p},
\end{equation*}
where $R(\eta_p)_*\Omega_{Z_p/Y}(\log E_p)$ is regarded
as {\itshape a genuine complex} on $X$
by using the Godement resolutions
(cf.~\cite[4.1]{fujino-fujisawa}).
The filtrations $L$ and $F$ are defined by
\begin{gather*}
L_m(R\eta_*\Omega_{Z_{\bullet}/Y}(\log E_{\bullet}))^n
=\bigoplus_{p \ge -m}(R(\eta_p)_*\Omega_{Z_p/Y}(\log E_p))^{n-p}, \\
F^r(R\eta_*\Omega_{Z_{\bullet}/Y}(\log E_{\bullet}))^n
=\bigoplus_pF^r(R(\eta_p)_*\Omega_{Z_p/Y}(\log E_p))^{n-p}
\end{gather*}
for all $m,n,r$.
Therefore we have
\begin{equation}
\label{eq-GrL-OmegaZ}
(\Gr_m^LR\eta_*\Omega_{Z_{\bullet}/Y}(\log E_{\bullet}), F)
\simeq
(R(\eta_{-m})_*\Omega_{Z_{-m}/Y}(\log E_{-m})[m], F)
\end{equation}
in the derived category. 
From the morphism
$\sigma \colon Z_{\bullet} \to (X, D)_{\bullet}$,
we obtain a morphism of bifiltered complexes
\begin{equation}
\label{eq:10}
(\varepsilon_*\Omega_{(X, D)_{\bullet}/Y}, L, F)
\to
(R\eta_*\Omega_{Z_{\bullet}/Y}(\log E_{\bullet}), L, F),
\end{equation}
which induces a morphism
\begin{equation}
\label{eq-GrmL-epsilo-eta}
\begin{split}
\Gr_m^L\Gr_F^0\varepsilon_*\Omega_{(X, D)_{\bullet}/Y}
&\simeq
(\varepsilon_{-m})_*\mathcal{O}_{(X, D)_{-m}} \\
&\to
R(\eta_{-m})_*\mathcal{O}_{Z_{-m}}
\simeq
\Gr_m^L\Gr_F^0R\eta_*\Omega_{Z_{\bullet}/Y}(\log E_{\bullet})
\end{split}
\end{equation}
for all $m$.
Because $\sigma_n$ induces the isomorphism
$\mathcal{O}_{(X, D)_n} \xrightarrow{\simeq} R(\sigma_n)_*\mathcal{O}_{Z_n}$
for all $n$,
we have the isomorphisms
\begin{equation*}
(\varepsilon_{-m})_*\mathcal{O}_{(X, D)_{-m}}
\simeq
R(\varepsilon_{-m})_*\mathcal{O}_{(X, D)_{-m}}
\simeq
R(\varepsilon_{-m})_*R(\sigma_{-m})_*\mathcal{O}_{Z_{-m}}
\simeq
R(\eta_{-m})_*\mathcal{O}_{Z_{-m}}
\end{equation*}
for all $m$.
Therefore the morphism \eqref{eq-GrmL-epsilo-eta}
is an isomorphism for all $m$ in the derived category,
which implies
\begin{equation}
\label{eq-GrF0-OmegaZ}
(\Gr_F^0\varepsilon_*\Omega_{(X, D)_{\bullet}/Y}, L)
\simeq
(\Gr_F^0R\eta_*\Omega_{Z_{\bullet}/Y}(\log E_{\bullet}), L)
\end{equation}
in the filtered derived category.
\end{say}

Now, we complete the proof of Theorem \ref{z-thm1.1}.

\begin{proof}
[{Proof of Theorem \ref{z-thm1.1} {\em{(ii)--(iv)}}}] 
First, we prove (ii).
The uniqueness of the filtration $F$ on ${}^l\mathcal{V}^k_{Y^*}$
follows from \cite[Corollary 5.2]{fujino-fujisawa}.
Therefore we may work locally on $Y$.
Then after shrinking $Y$ to a relatively compact open subset,
we can take $Z_{\bullet}$
and $\sigma_{\bullet} \colon Z_{\bullet} \to (X, D)_{\bullet}$
in \ref{c-say3.7}
by the theorem of resolution of singularities (see 
\cite[Section 13]{bierstone-milman}).
By Lemma \ref{c-lem3.6},
there exists a closed analytic subset $\Sigma_0 \subset \Sigma$
with $\dim \Sigma_0 \le \dim Y-2$
such that $\Sigma \setminus \Sigma_0$
is a smooth divisor in $Y \setminus \Sigma_0$,
and that $\Omega^1_{Z_n/Y}(\log E_n)$
is locally free over $g_n^{-1}(Y \setminus \Sigma_0)$
for all $n \in \mathbb{Z}_{\ge 0}$.
By setting $Y_0:=Y \setminus \Sigma_0$,
we trivially have $Y^* \subset Y_0 \subset Y$.

Now we set
\begin{equation*}
K(\log)_{\mathcal{O}}=Rf_*R\eta_*\Omega_{Z_{\bullet}/Y}(\log E_{\bullet})
\end{equation*}
equipped with the induced filtrations $L$ and $F$.
Because $\sigma$ is isomorphic over $Y^*$,
the morphism \eqref{eq:10} induces an isomorphism
\begin{equation}
\label{eq-isom-over-Y*}
(K_{\mathcal{O}}, L, F)|_{Y^*}
\xrightarrow{\simeq}
(K(\log)_{\mathcal{O}}, L, F)|_{Y^*},
\end{equation}
and then a morphism of complexes
$\beta \colon K_{\mathbb{R}} \to K(\log)_{\mathcal{O}}|_{Y^*}$
is defined by
\begin{equation}
\beta \colon K_{\mathbb{R}} \xrightarrow{\alpha} K_{\mathcal{O}}|_{Y^*}
\xrightarrow{\simeq} K(\log)_{\mathcal{O}}|_{Y^*},
\end{equation}
where $K_{\mathbb{R}}$ and $\alpha$
are given in \eqref{eq:1}.
A triple
\begin{equation}
K(\log)=
((K_{\mathbb{R}},L), (K(\log)_{\mathcal{O}},L,F), \beta)
\end{equation}
satisfies the conditions
\ref{item:7}--\ref{item:9}
because $K|_{Y^*} \simeq K(\log)|_{Y^*}$ as above.
Then the triple on $Y_0$
\begin{equation}
K(\log)|_{Y_0}=((K_{\mathbb{R}},L), (K(\log)_{\mathcal{O}},L,F)|_{Y_0}, \beta)
\end{equation}
satisfies all the assumptions in Theorem \ref{lem:4}
by \eqref{eq-GrL-OmegaZ} and Lemma \ref{c-lem3.3}.
Applying Theorem \ref{lem:4} to $K(\log)|_{Y_0}$,
we conclude
\begin{gather}
H^n(K(\log)_{\mathcal{O}})|_{Y_0}
\simeq ({}^\ell H^n(K(\log)_{\mathcal{O}})|_{Y^*})|_{Y_0}
\simeq ({}^\ell H^n(K_{\mathcal{O}})|_{Y^*})|_{Y_0} \\
L_mH^n(K(\log)_{\mathcal{O}})|_{Y_0}
\simeq ({}^\ell L_mH^n(K(\log)_{\mathcal{O}})|_{Y^*})|_{Y_0}
\simeq ({}^\ell L_mH^n(K_{\mathcal{O}})|_{Y^*})|_{Y_0}
\end{gather}
from \eqref{eq-isom-over-Y*},
and that
$\Gr_F^a\Gr_m^LH^n(K(\log)_{\mathcal{O}})|_{Y_0}$
is locally free of finite rank for every $a,m,n$.
By the isomorphism
$\mathcal{V}^k_{Y^*} \simeq H^k(K_{\mathcal{O}})|_{Y^*}$
as in the proof of Theorem \ref{z-thm1.1} (i) above,
we obtain a filtration $F$
on $({}^\ell \mathcal{V}^k_{Y^*})|_{Y_0}$
satisfying the two conditions in Theorem \ref{z-thm1.1} (ii) on $Y_0$.
Then Lemma 1.11.2 in \cite{kashiwara}
together with Schmid's nilpotent orbit theorem 
(see \cite[(4.12)]{schmid}) 
for each $\Gr_m^L\mathcal{V}^k_{Y^*}$
implies the conclusion of Theorem \ref{z-thm1.1} (ii)
on the whole $Y$.

Next, we will prove (iii).
By \ref{item:3} of Theorem \ref{lem:4} for $K(\log)|_{Y_0}$,
we have
\begin{equation}
H^k(\Gr_F^aK(\log)_{\mathcal{O}})|_{Y_0}
\simeq
\Gr_F^aH^k(K(\log)_{\mathcal{O}})|_{Y_0}
\simeq
\Gr_F^a({}^\ell \mathcal{V}^k_{Y^*})|_{Y_0}
\end{equation}
for every $a,k$.
On the other hand,
\begin{equation}
\begin{split}
\Gr_F^0K(\log)_{\mathcal{O}}
&=\Gr_F^0Rf_*R\eta_*\Omega_{Z_{\bullet}/Y}(\log E_{\bullet}) \\
&\simeq
\Gr_F^0Rf_*\varepsilon_*\Omega_{(X, D)_{\bullet}/Y}
=\Gr_F^0K_{\mathcal{O}}
\simeq Rf_*\mathcal{O}_X(-D)
\end{split}
\end{equation}
by \eqref{eq-GrF0-OmegaZ} and \ref{qis-O(-D)}.
Thus we obtain an isomorphism
\begin{equation}
R^{d-i}f_*\mathcal{O}_X(-D)|_{Y_0}
\simeq
H^{d-i}(\Gr_F^0K(\log)_{\mathcal{O}})|_{Y_0}
\simeq
\Gr_F^0({}^\ell \mathcal{V}^{d-i}_{Y^*})|_{Y_0}
\end{equation}
for every $i$.
Because $R^{d-i}f_*\mathcal{O}_X(-D)$
is locally free of finite rank by Theorem \ref{z-thm1.4},
and because $\Sigma_0=Y \setminus Y_0$ has the codimension at least two,
the isomorphism above
extends uniquely to the whole $Y$.

By Grothendieck duality (see \cite{rrv}),
we obtain (iv) from (iii). 
\end{proof}

\begin{rem}
\label{rem:2} 
As already mentioned in Remark \ref{z-rem1.2}, the local system 
$R^k(f|_{X^*\setminus D^*})_!\mathbb R_{X^* \setminus D^*}$
underlies {\itshape an admissible} graded polarizable variation
of $\mathbb{R}$-mixed Hodge structure on $Y^*$. 
(For the definition of admissibility,
see \cite[(3.13) Properties]{sz},
\cite[1.8, 1.9]{kashiwara},
\cite[Definition 14.49]{Peters-SteenbrinkMHS},
\cite[Definition 3.11]{fujino-fujisawa}, and so on.) 
In order to check the admissibility,
we may assume that $(Y, \Sigma)=(\Delta, \{0\})$ from the beginning.
We may further assume that all the monodromy automorphisms of
$R^i(f_S)_\ast\mathbb{R}_S|_{\Delta^*}$ around the origin
are unipotent for all strata $S$ and for all $i \in \mathbb{Z}$
by \cite[Lemma 1.9.1]{kashiwara}.
Then the extension of the Hodge filtration
has been already given by Theorem \ref{z-thm1.1} (ii).
On the other hand,
the existence of the relative monodromy weight filtration
can be proved by the same way
as the proof of Lemma 4.10 of \cite{fujino-fujisawa}.
Here we remark that
the coincidence of the monodromy weight filtration
and the weight filtration
on the limit mixed Hodge structure
in \cite[(5.9) Theorem]{steenbrink1}
holds true for a proper surjective morphism
$f \colon X \to \Delta$ from a K\"ahler manifold $X$ to the unit disc $\Delta$
such that $f$ is smooth over $\Delta^*$,
that the local system $R^if_*\mathbb{R}|_{\Delta^*}$
is of unipotent monodromy for every $i$,
and that $f^{-1}(0)_{\red}$ is a simple normal crossing divisor
(cf. \cite[4.2.5 Remarque]{saito1},
\cite[(5.2) Th\'eor\`eme]{Guillen-NavarroAznarCI}).
\end{rem}

The following theorem 
is an easy consequence of the 
proof of Theorem \ref{z-thm1.4}. 
We will use it in the proof of Theorem \ref{z-thm1.5}. 

\begin{thm}\label{c-thm3.8} 
In Theorem \ref{z-thm1.1}, 
for every $i$, 
there exists a finite filtration of locally free sheaves 
\begin{equation*}
0=\mathcal E^i_0 \subset \mathcal E^i_1\subset \cdots 
\subset \mathcal E^i_{l_i}=R^if_*\omega_{X/Y}(D) 
\end{equation*} 
such that 
\begin{equation*}
\mathcal E^i_{j+1} /\mathcal E^i_j
\end{equation*} 
is isomorphic to a direct summand of 
\begin{equation*}
\bigoplus _{\mathrm{finite}} R^\alpha f_*\omega_{S_\beta/Y}, 
\end{equation*} 
where $\alpha$ is a nonnegative integer and $S_\beta$ is a 
stratum of $(X, D)$, for every $j$. 

Moreover, if $\pi\colon Y\to Z$ is a projective bimeromorphic 
morphism of complex varieties, then 
\begin{equation*}
R^p\pi_*R^if_*\omega_X(D)=0
\end{equation*} 
holds for every $p>0$. In particular, we have 
\begin{equation*}
\pi_*R^if_*\omega_X(D)\simeq R^i(\pi\circ f)_*\omega_X(D). 
\end{equation*} 
\end{thm}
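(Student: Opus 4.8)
The plan is to transport the weight filtration produced in the proof of Theorem \ref{c-thm1.3} through relative duality, and then to apply the Takegoshi-type vanishing of Lemma \ref{b-lem2.11}. Recall from that proof (Lemma \ref{lem:1} applied to the triple $K$ of \ref{para:3}) that the weight spectral sequence $E_1^{p,q}=\bigoplus_S R^q(f_S)_*\mathcal O_S \Rightarrow R^{p+q}f_*\mathcal O_X(-D)$, with $S$ running over the $(\dim X-p)$-dimensional strata, degenerates at $E_2$ and has splitting $d_1$. Hence each $E_\infty^{p,q}=E_2^{p,q}$ is a direct summand of $E_1^{p,q}=\bigoplus_S R^q(f_S)_*\mathcal O_S$, and the abutment filtration $L$ on $R^{d-i}f_*\mathcal O_X(-D)$ is a filtration by subbundles whose graded quotients are direct summands of $\bigoplus_S R^q(f_S)_*\mathcal O_S$. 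All the sheaves here are locally free, by Theorem \ref{c-thm1.3} and by the dual of Theorem \ref{b-thm2.13}, since every stratum $S$ is a K\"ahler manifold smooth over $Y^*$.

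First I would dualize. By Theorem \ref{c-thm1.1} (iii) and (iv) we have $R^if_*\omega_{X/Y}(D)\simeq (R^{d-i}f_*\mathcal O_X(-D))^*$, so applying $(-)^*$ to the filtration $L$ (a filtration by subbundles, hence dualizing to short exact sequences of locally free sheaves) yields a finite filtration $0=\mathcal E^i_0\subset\cdots\subset\mathcal E^i_{l_i}=R^if_*\omega_{X/Y}(D)$ in which $\mathcal E^i_{j+1}/\mathcal E^i_j$ is a direct summand of $\bigoplus_S (R^q(f_S)_*\mathcal O_S)^*$. For each stratum $S$, which is smooth and proper over $Y$ of relative dimension $d_S=\dim S-\dim Y$, relative Grothendieck duality (see \cite{rrv}) gives $(R^q(f_S)_*\mathcal O_S)^*\simeq R^{d_S-q}(f_S)_*\omega_{S/Y}$, locally free by Theorem \ref{b-thm2.13}. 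Thus each $\mathcal E^i_{j+1}/\mathcal E^i_j$ is a direct summand of $\bigoplus_{\mathrm{finite}}R^\alpha f_*\omega_{S_\beta/Y}$ with $\alpha=d_S-q\ge 0$, which is the first assertion.

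Next I would tensor by the line bundle $\omega_Y$. Since $\omega_X(D)\simeq \omega_{X/Y}(D)\otimes f^*\omega_Y$ and $\omega_{S_\beta}\simeq\omega_{S_\beta/Y}\otimes f_{S_\beta}^*\omega_Y$, the projection formula converts the above into a finite filtration of $R^if_*\omega_X(D)$ whose graded quotients are direct summands of $\bigoplus_{\mathrm{finite}}R^\alpha f_*\omega_{S_\beta}$. For the vanishing I would apply $R\pi_*$: it suffices to show $R^p\pi_*\mathcal G=0$ for $p>0$ and for every graded quotient $\mathcal G$, since the long exact sequences attached to the filtration then propagate the vanishing to $R^p\pi_*R^if_*\omega_X(D)$. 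Each such $\mathcal G$ is a direct summand of $\bigoplus_S R^\alpha(f_S)_*\omega_S$ with the $S$ smooth K\"ahler manifolds, so this is precisely the input of Lemma \ref{b-lem2.11} when $f$, and hence every $f_S$, is projective, as in the application to Theorem \ref{c-thm1.4}; in general the K\"ahler hypothesis on the strata lets us invoke the proper-K\"ahler form of Takegoshi's vanishing theorem underlying Theorem \ref{b-thm2.10} and Lemma \ref{b-lem2.11}. Finally, the Grothendieck spectral sequence $R^p\pi_*R^qf_*\omega_X(D)\Rightarrow R^{p+q}(\pi\circ f)_*\omega_X(D)$ degenerates once all terms with $p>0$ vanish, yielding $\pi_*R^if_*\omega_X(D)\simeq R^i(\pi\circ f)_*\omega_X(D)$.

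The main obstacle is the bookkeeping in the dualization step, namely keeping track of the degrees $\alpha=d_S-q$ and of the fact that the filtration is by subbundles so that duality produces honest short exact sequences, together with the slight mismatch between the properness of $f$ in Theorem \ref{c-thm1.1} and the projectivity assumed in Lemma \ref{b-lem2.11}; the latter is bridged by the K\"ahler assumption on the strata. I expect nothing harder than this, since all the substantive analytic content (degeneration at $E_2$, the splitting of $d_1$, local freeness, and the canonical-extension identifications) has already been established in Theorems \ref{c-thm1.1} and \ref{c-thm1.3}.
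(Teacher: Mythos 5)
Your proposal is correct and follows essentially the same route as the paper's own proof: dualize the filtration furnished by Theorem \ref{c-thm1.3} via Grothendieck duality (see \cite{rrv}) to obtain the filtration on $R^if_*\omega_{X/Y}(D)$, then twist by $\omega_Y$ and apply the Takegoshi-type vanishing of Theorem \ref{b-thm2.10} to each graded piece before concluding with the Leray spectral sequence. The only difference is one of explicitness: the paper defines $\mathcal E^i_j$ directly as $\mathcal Hom_{\mathcal O_Y}(\mathcal F^{d-i}_{l_i}/\mathcal F^{d-i}_{l_i-j},\mathcal O_Y)$ and leaves the stratum-wise identification $(R^q(f_S)_*\mathcal O_S)^*\simeq R^{d_S-q}(f_S)_*\omega_{S/Y}$ implicit, whereas you spell it out, along with the proper-K\"ahler form of Takegoshi's theorem needed because $f$ is only proper.
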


\begin{proof}
By Theorem \ref{z-thm1.4}, there exists a finite filtration of 
locally free sheaves 
\begin{equation*}
0= \mathcal F^{d-i}_0 \subset 
\mathcal F^{d-i}_1\subset 
\cdots \subset \mathcal F^{d-i}_{l_i}=R^{d-i}f_*\mathcal O_X(-D)
\end{equation*} 
such that 
\begin{equation*}
\mathcal F^{d-i}_{j+1}/\mathcal F^{d-i}_j
\end{equation*} 
is isomorphic to a direct summand of 
\begin{equation*}
\bigoplus _{\mathrm{finite}} R^{r}f_*\mathcal O_{S_\beta}, 
\end{equation*} 
where $S_\beta$ is a stratum of $(X, D)$ 
and $r$ is a nonnegative integer, for every $j$. 
We put 
\begin{equation*}
\mathcal E^i_j:=\mathcal {H}om _{\mathcal O_Y}
(\mathcal F^{d-i}_{l_i}/\mathcal F^{d-i}_{l_i-j}, 
\mathcal O_Y)
\end{equation*} 
for every $j$. 
Then, by Grothendieck duality (see \cite{rrv}), 
we obtain a desired filtration of $R^if_*\omega_{X/Y}(D)$. 
By Theorem \ref{z-thm2.11}, 
we have $R^p\pi_*R^\alpha f_*\omega_{S_\beta}=0$ for every $p>0$. 
This implies that 
$R^p\pi_*\left(\mathcal E^i_j\otimes \omega_Y\right)=0$ holds 
for every $p>0$ and every $j$. 
Thus we obtain $R^p\pi_*R^if_*\omega_X(D)=0$ 
for every $p>0$. 
Hence we have $\pi_*R^if_*\omega_X(D)\simeq 
R^i(\pi\circ f)_*\omega_X(D)$. 
We finish the proof. 
\end{proof}

We close this section with the proof of 
Theorem \ref{z-thm1.3}, which is a generalization 
of the Fujita--Zucker--Kawamata semipositivity theorem 
(see, for example, 
\cite[Section 5]{fujino-fujisawa}, 
\cite[Corollary 2]{fujino-fujisawa-saito}, 
\cite{fujino-fujisawa2}, and so on). 

\begin{proof}[Proof of Theorem \ref{z-thm1.3}] 
Let 
\[
\varphi_C\colon C\longrightarrow V\overset{\varphi}{\longrightarrow} X
\] 
be a morphism from a smooth projective curve $C$. 
Then, by Theorem \ref{z-thm1.1} (iv), the proof of \cite[Theorem 5.21]{fujino-fujisawa} 
works for $\varphi_C\colon C\to X$. 
Hence, we obtain the desired semipositivity of $\varphi^*R^if_*\omega_{X/Y}(D)$. 
We finish the proof. 
\end{proof}

We note that Theorems \ref{z-thm1.1} and 
\ref{z-thm1.3} have already played a crucial 
role when $f\colon (X, D)\to Y$ is 
algebraic. We recommend that the interested 
reader looks at \cite{fujino-moduli}, 
\cite{fujino-basic-slc}, \cite{fujino-hyperbolic}, 
\cite{fujino-on-quasi}, \cite{fujino-fujisawa-liu}, 
\cite{fujino-hashizume}, and so on. 

\section{Proof of Theorem \ref{z-thm1.5}}
\label{z-sec5}

In this section, we will prove Theorem \ref{z-thm1.5} 
by using Theorem \ref{c-thm3.8}. 
In Section \ref{z-sec6}, we will see that Theorem 
\ref{z-thm1.7} follows from Theorem \ref{z-thm1.5}. 

\begin{proof}[Proof of Theorem \ref{z-thm1.5}]
In Step \ref{z-step1.5.1} and Step \ref{z-step1.5.2}, we will prove (i) and (ii), respectively. 
\setcounter{step}{0}
\begin{step}\label{z-step1.5.1}
In this step, we will prove (i). 

We take an arbitrary point $P\in Y$. 
It is sufficient to prove (i) around $P$. 
By Lemma \ref{z-lem2.9}, 
we may assume that $(X, D)$ is an analytic globally 
embedded simple normal crossing pair 
and that there exists the following commutative diagram: 
\begin{equation*}
\xymatrix{
X\ar[d]_-f\ar@{^{(}->}[r]& M\ar[d]^-{q_M}\\
Y\ar@{^{(}->}[r]_-{\iota_Y} & \Delta^m, 
}
\end{equation*}
where $M$ is the ambient space of $(X, D)$, 
such that $q_M$ is projective and $\iota_Y(P)=0\in \Delta^m$. 
By taking a suitable resolution of singularities of 
$Y$ (see \cite[Sections 12 and 13]{bierstone-milman}), 
there exist a projective bimeromorphic 
morphism $\psi\colon Y'\to Y$ from a 
smooth complex variety $Y'$ and a simple normal 
crossing divisor $\Sigma'$ on $Y'$ such that 
every stratum of $(X, D)$ is smooth over $Y
\setminus \psi(\Sigma')$. 
Then, by taking a suitable resolution of singularities 
of $M$ (see \cite[Sections 12 and 13]{bierstone-milman}) and 
applying Lemma \ref{z-lem2.8}, 
we may assume that 
\begin{equation*}
f'\colon X\overset{f}{\longrightarrow} 
Y \overset{\psi^{-1}}{\dashrightarrow} Y'
\end{equation*}
is a 
projective morphism. 
Hence we have the following commutative diagram: 
\begin{equation*}
\xymatrix{
X\ar[d]_-{f'} \ar@{=}[r]& X\ar[d]^-f\\
Y'\ar[r]_-{\psi} & Y 
}
\end{equation*} 
such that 
every stratum of $(X, D)$ is smooth over 
$Y'\setminus \Sigma'$. 
By Theorem \ref{c-thm3.8}, $R^qf'_*\omega_{X/Y'}(D)$ is locally
free and has a finite filtration as in Theorem \ref{c-thm3.8}. 
Since $\psi$ is 
projective bimeromorphic and 
$R^qf_*\omega_X(D)\simeq \psi_* R^qf'_*\omega_X(D)$ by Theorem 
\ref{c-thm3.8}, $R^qf_*\omega_X(D)$ is torsion-free. 
This is what we wanted. 
\end{step} 
\begin{step}\label{z-step1.5.2}
In this step, we will prove (ii). 

We take an arbitrary point $P\in Z$. 
It is sufficient to prove (ii) around $P$. 
As in Step \ref{z-step1.5.1}, after shrinking $Z$ suitably, 
by Lemma \ref{z-lem2.9}, a suitable 
resolution of singularities (see \cite[Sections 12 and 13]{bierstone-milman}), 
and Lemma \ref{z-lem2.8}, 
we may assume that 
there exists the following commutative diagram: 
\begin{equation*}
\xymatrix{
X\ar@{=}[r]\ar[d]_-{f'}&X\ar[d]_-f\ar@{^{(}->}[r]& M\ar[dd]^-{q_M}\\
Y' \ar[r]_-\psi& Y\ar[d]_-\pi& &\\ 
&Z \ar@{^{(}->}[r]_-{\iota_Z}  & \Delta^m 
}
\end{equation*}
such that $\iota_Z(P)=0\in \Delta^m$. 
By Theorem \ref{c-thm3.8}, we have 
\begin{equation*}
R^p\pi_*\left(\mathcal A\otimes R^qf_*\omega_X(D)\right)
\simeq R^p\pi_*\left(\mathcal A\otimes \psi_* 
R^qf'_*\omega_X(D)\right)
\simeq R^p(\pi\circ \psi)_*\left(\psi^*
\mathcal A\otimes R^qf'_*\omega_X(D)\right). 
\end{equation*} 
By Theorem \ref{c-thm3.8} again, 
$R^qf'_*\omega_{X/{Y'}}(D)$ has a finite 
filtration as in Theorem \ref{c-thm3.8}. 
Thus we can reduce the problem to the case where 
$X$ is smooth and $D=0$. 
Since $\psi^*\mathcal A$ is $(\pi\circ \psi)$-nef and 
$(\pi\circ \psi)$-big over $Z$, 
we get the desired vanishing theorem by Theorem 
\ref{z-thm2.11}. We finish the proof of (ii). 
\end{step}
We finish the proof of Theorem \ref{z-thm1.5}. 
\end{proof}

\begin{rem}\label{z-rem5.1} 
By the above proof, we see that 
Theorem \ref{z-thm1.5} (ii) holds under a weaker assumption 
that $\mathcal A$ is $\pi$-nef and $\pi$-big over $Z$ (see 
Theorem \ref{z-thm2.11}). 
\end{rem}

\section{Proof of Theorem 
\ref{z-thm1.7}}\label{z-sec6}

In this section, we will prove Theorem \ref{z-thm1.7} 
by using Theorem \ref{z-thm1.5}. As we mentioned before, 
Theorem \ref{z-thm1.7} (iii) is an 
easy consequence of Theorem \ref{z-thm1.7} (i) and 
(ii). 

\begin{proof}[Proof of Theorem \ref{z-thm1.7}]
In Step \ref{z-step1.7.1}, we will prove Theorem 
\ref{z-thm1.7} (i). Then, in Steps \ref{z-step1.7.2} and 
\ref{z-step1.7.3}, 
we will prove Theorem \ref{z-thm1.7} (ii) and (iii), respectively. 
\setcounter{step}{0}
\begin{step}\label{z-step1.7.1} 
In this step, we will prove Theorem \ref{z-thm1.7} (i). 

By replacing $Y$ with $f(X)$, we may assume that 
$f(X)=Y$. 
Let $P\in Y$ be an arbitrary point. 
It is sufficient to prove the statement after shrinking 
$Y$ around $P$ suitably. 
By Lemma \ref{z-lem2.9}, we may 
assume that $(X, D)$ is an analytic globally embedded 
simple normal crossing pair and that 
there exists the following 
commutative diagram: 
\begin{equation*}
\xymatrix{
X\ar[d]_-f\ar@{^{(}->}[r]& M\ar[d]^-{q_M}\\
Y\ar@{^{(}->}[r]_-{\iota_Y} & \Delta^m, 
}
\end{equation*}
where $M$ is the ambient space of $(X, D)$, 
such that $q_M$ is projective and $\iota_Y(P)=0\in 
\Delta^m$. 
By using Lemma \ref{z-lem2.10} 
finitely many times, we can decompose $X=X'+X''$ as 
follows:~$X'$ is the 
union of all strata of $(X, D)$ that are not mapped onto 
irreducible components of $Y=f(X)$ and $X''=X-X'$. 
We put 
\begin{equation*}
K_{X'}+D_{X'}:=(K_X+D)|_{X'}
\end{equation*} 
and 
\begin{equation*}
K_{X''}+D_{X''}:=(K_X+D)|_{X''}-X'|_{X''}. 
\end{equation*} 
We note that 
$(X'', D_{X''})$ is an analytic globally embedded simple 
normal crossing pair such that $D_{X''}$ is 
reduced and that every stratum of $(X'', D_{X''})$ is 
mapped onto some irreducible component of $Y$. 
We consider the following short exact sequence: 
\begin{equation*}
0\to \mathcal O_{X''}(K_{X''}+D_{X''})
\to \mathcal O_X(K_X+D)\to 
\mathcal O_{X'}(K_{X'}+D_{X'})\to 0. 
\end{equation*} 
By Theorem \ref{z-thm1.5} (i), 
every associated subvariety 
of $R^qf_*\mathcal O_{X''}(K_{X''}+D_{X''})$ is an 
irreducible component of $Y$ for every $q$. 
Note that every associated subvariety 
of $R^qf_*\mathcal O_{X'}(K_{X'}+D_{X'})$ is contained 
in $f(X')$ for every $q$. 
Thus, the connecting homomorphisms 
\begin{equation*}
\delta\colon R^qf_*\mathcal O_{X'}
(K_{X'}+D_{X'})\to R^{q+1}f_*\mathcal O_{X''}(K_{X''}
+D_{X''})
\end{equation*} 
are zero for all $q$. 
Hence we obtain the following short exact sequence 
\begin{equation}\label{z-eq6.1}
0\to R^qf_*\mathcal O_{X''} (K_{X''}+D_{X''})
\to R^qf_*\mathcal O_X(K_X+D)\to R^qf_*\mathcal O_{X'}(K_{X'}+D_{X'})
\to 0
\end{equation} 
for every $q$. 
By induction on $\dim f(X)$, 
every associated subvariety of $R^qf_*\mathcal O_{X'}(K_{X'}+D_{X'})$ is 
the $f$-image of some stratum of $(X', D_{X'})$ for 
every $q$. 
Therefore, every associated subvariety 
of $R^qf_*\mathcal O_X(K_X+D)$ is the $f$-image 
of some stratum of $(X, D)$ for 
every $q$ by \eqref{z-eq6.1}. 
\end{step}
\begin{step}\label{z-step1.7.2}
In this step, we will prove Theorem \ref{z-thm1.7} (ii). 

We may assume that $f(X)=Y$ and $\pi\circ f(X)=Z$. 
Let $P\in Z$ be an arbitrary point. 
It is sufficient to prove the desired vanishing theorem 
after shrinking $Z$ around $P$ suitably. 
As in Step \ref{z-step1.7.1}, by Lemma \ref{z-lem2.9}, 
we have the following commutative diagram: 
\begin{equation*}
\xymatrix{
X\ar[d]_-{\pi\circ f}\ar@{^{(}->}[r]& M\ar[d]^-{q_M}\\
Z\ar@{^{(}->}[r]_-{\iota_Z} & \Delta^m, 
}
\end{equation*}
where $M$ is the ambient space of $(X, D)$, 
such that $q_M$ is projective and $\iota_Z(P)=0\in 
\Delta^m$. 
By the same argument as in Step \ref{z-step1.7.1}, 
we obtain 
\begin{equation}\label{z-eq6.2}
0\to R^qf_*\mathcal O_{X''} (K_{X''}+D_{X''})
\to R^qf_*\mathcal O_X(K_X+D)\to R^qf_*\mathcal O_{X'}(K_{X'}+D_{X'})
\to 0
\end{equation} 
for every $q$. 
By applying Theorem \ref{z-thm1.5} (ii) to 
every connected component of $X''$, we see that  
\begin{equation*}
R^p\pi_*\left(\mathcal A\otimes 
R^qf_*\mathcal O_{X''}(K_{X''}+D_{X''})\right)=0
\end{equation*} 
holds 
for every $p>0$. 
By induction on $\dim f(X)$, 
we obtain 
\begin{equation*}
R^p\pi_*\left(\mathcal A\otimes 
R^qf_*\mathcal O_{X'}(K_{X'}+D_{X'})\right)=0
\end{equation*} for 
every $p>0$. 
This implies 
\begin{equation*}
R^p\pi_*\left(\mathcal A\otimes 
R^qf_*\mathcal O_X(K_X+D)\right)=0
\end{equation*} 
for every $p>0$. 
This is what we wanted. 
\end{step}
\begin{step}\label{z-step1.7.3} 
In this step, we will prove Theorem \ref{z-thm1.7} (iii). 

It is well known that the injectivity theorem is an easy 
consequence of the strict support condition (see (i)) and 
the vanishing theorem (see (ii)). 
Since we have already proved the strict support 
condition (see (i)) and the vanishing theorem 
(see (ii)) in Steps \ref{z-step1.7.1} and \ref{z-step1.7.2}, 
respectively, we obtain the desired injectivity in (iii). 
For the details, see the proof of \cite[Theorem 3.1 (iii)]
{fujino-analytic-vanishing}. 
\end{step} 
We finish the proof of Theorem \ref{z-thm1.7}. 
\end{proof}

\begin{rem}\label{z-rem6.1} 
Theorem \ref{z-thm1.7} (ii) holds under a weaker assumption that 
$\mathcal A$ is nef and log big over $Z$ with 
respect to $f\colon (X, D)\to Y$. 
We can easily check it by the above proof of Theorem 
\ref{z-thm1.7} (ii) and Remark \ref{z-rem5.1}. 
We do not discuss the details here because we have 
already known a more general statement, that is, 
the vanishing theorem of Reid--Fukuda type 
(see Theorem \ref{z-thm1.9}). 
We note that Theorem \ref{z-thm1.7} (ii) is 
a very special case of Theorem \ref{z-thm1.9} and 
that Theorem \ref{z-thm1.9} follows from 
Theorem \ref{z-thm1.7} (see \cite{fujino-analytic-vanishing}).  
\end{rem}

\section{Supplement to \cite{steenbrink2}}\label{c-sec6}

In this section,
we give a remark on the construction
of the cohomological $\mathbb{Q}$-mixed Hodge complex
$((A_{\mathbb{Q}}, W), (A_{\mathbb{C}}, W, F))$
in \cite[p.536]{steenbrink2}.
More precisely, we will present
a new construction of $(A_{\mathbb{Q}}, W)$ here.
In the context of log geometry,
such a construction 
originates from \cite{Steenbrink3}
and is used in other articles
(e.g.~\cite{Fujisawa-Nakayama}, \cite{Fujisawa2} and so on).
For the case of a semistable reduction,
a new construction of $(A_{\mathbb{Q}}, W)$,
which is similar to \cite{Steenbrink3},
is given in \cite[11.2.6 The Rational Structure]{Peters-SteenbrinkMHS}.
(For the case of a semistable morphism over the polydisc,
see e.g.~\cite{Fujisawa1}.)
Here we will see that the construction
in \cite{Fujisawa2}
works in the situation of \cite{steenbrink2}.

\begin{say}
\label{c-say6.1}
Let $f \colon X \to \Delta$ be a proper surjective morphism
from a smooth complex variety $X$ to the unit disc $\Delta$
satisfying the conditions
\begin{itemize}
\item
$f$ is smooth over $\Delta^*=\Delta \setminus \{0\}$, and
\item
$\Supp f^{-1}(0)$ is a simple normal crossing divisor on $X$
\end{itemize}
as in \cite[(2.1) Notations]{steenbrink2}.
Note that $f^{-1}(0)$ is {\itshape not} assumed to be reduced.
We fix $N \in \mathbb{Z}_{>0}$,
which is a multiple of
all the multiplicities of the irreducible components of $\Supp f^{-1}(0)$ 
in $f^{-1}(0)$,
and consider the morphism
$\sigma \colon \Delta \to \Delta$ given by $\sigma(t)=t^N$.
We define $\widetilde{X}, \pi$  and $\widetilde{f}$
by the commutative diagram
\begin{equation}
\xymatrix{
\widetilde{X}
\ar[rd]^-{\nu} \ar@/^8pt/[rrd]^{\pi} \ar@/_5pt/[rdd]_{\widetilde{f}}
& & \\
& X \times_{\Delta} \Delta \ar[r] \ar[d] & X \ar[d]^-{f}\\
& \Delta \ar[r]_-{\sigma} & \Delta
}
\end{equation}
where $\nu$ is the normalization.
We set $E=\Supp \widetilde{f}^{-1}(0)$,
which is an effective Cartier divisor on $\widetilde{X}$.
The irreducible decomposition of $E$
is written in
$E=\bigcup_{i=1}^lE_i$.
The closed immersion $E_i \hookrightarrow \widetilde{X}$
is denoted by $a_i$.
\end{say}

\begin{say}
\label{c-say6.2}
We recall the local description of $\widetilde{X}$ and $\widetilde{f}$
given in the proof of \cite[(2.2) Lemma]{steenbrink2}.
For any point of $\widetilde{X}$,
there exist an open neighborhood $\widetilde{U}$ in $\widetilde{X}$,
$d_1, \dots, d_k \in \mathbb{Z}_{>0}$
with $\gcd(d_1, \dots, d_k)=1$,
and $e \in \mathbb{Z}_{>0} \cap (\bigcap_{i=1}^kd_i\mathbb{Z})$
with $N \in e\mathbb{Z}$
such that $\widetilde{U}$ and $\widetilde{f}|_{\widetilde{U}}$
are described
by using $d_1, \dots, d_k, e$
as follows.
By setting $c_i:=e/d_i \in \mathbb{Z}_{>0}$
and $G:=\bigoplus_{i=1}^k\mathbb{Z}/c_i\mathbb{Z}$,
the kernel of the morphism 
\begin{equation}
G=\bigoplus_{i=1}^k\mathbb{Z}/c_i\mathbb{Z}
\ni (b_1, \dots, b_k)
\mapsto \sum_{i=1}^{k}d_i b_i \in \mathbb{Z}/e\mathbb{Z}
\end{equation}
is denoted by $H$.
The finite abelian group $G$ acts
on the polydisc $\Delta^n$ by
\begin{equation}
(b_1, \dots, b_k) \cdot y_i
=
\begin{cases}
\exp(2\pi\sqrt{-1} b_i/c_i)y_i &\quad \text{for $1 \le i \le k$} \\
y_i &\quad \text{for $k+1 \le i \le n$},
\end{cases}
\end{equation} 
where $(y_1, \dots, y_n)$ is the coordinate of $\Delta^n$.
Then
$\widetilde{U} \simeq \Delta^n/H$
and $\widetilde{f}^*t=y_1 \cdots y_k$,
where $t$ is the coordinate of $\Delta$.
Note that $y_1 \cdots y_k$ is $H$-invariant.
Moreover, $U=\pi(\widetilde{U})$ is an open subset of $X$,
and we also have
$U \simeq \Delta^n/G$
and $f^*t=(y_1 \cdots y_k)^N$.
Here we note that $(y_1 \cdots y_k)^N$ is $G$-invariant
because $N \in e\mathbb{Z}$.
The $G$-invariant functions
$y_1^{c_1}, \dots, y_k^{c_k}, y_{k+1}, \dots, y_n$
give us a coordinate on $U$.

From the local description above,
$\widetilde{X}$ is trivially a $V$-manifold.
We can easily see that
$E_i$ is a reduced Cartier divisor
on $X \setminus \bigcup_{j \not= i}E_j$.
Moreover, $E_i$ is locally irreducible at any point
because $\pi(E_i)$ is an irreducible component of $\Supp f^{-1}(0)$
and because $\Supp f^{-1}(0)$ is a simple normal crossing divisor
on $X$.
\end{say}

\begin{say}\label{c-say6.3}
In the situation \ref{c-say6.1},
the log structure on $\widetilde{X}$
associated to the effective divisor $E$
is denoted by $\mathcal{M}$,
that is,
\begin{equation}
\mathcal{M}:=
\mathcal{O}_{\widetilde{X}}
\cap
j_*\mathcal{O}^*_{\widetilde{X} \setminus E}
\end{equation}
in $j_*\mathcal{O}_{\widetilde{X} \setminus E}$,
where $j$ denotes the open immersion
$\widetilde{X} \setminus E \hookrightarrow \widetilde{X}$.
The abelian sheaf
associated to the monoid sheaf $\mathcal{M}$
is denoted by $\mathcal{M}\gp$.
By using the fact that $E_i$ is locally irreducible,
a morphism of monoid sheaves
$\mathcal{M} \to (a_i)_*\mathbb{N}_{E_i}$
can be defined by
\begin{equation}
\label{c-eq6.1}
\mathcal{M}=
\mathcal{O}_{\widetilde{X}}
\cap
j_*\mathcal{O}^*_{\widetilde{X} \setminus E}
\ni a
\mapsto
\ord_{E_i}(a) \in (a_i)_*\mathbb{N}_{E_i}
\end{equation}
for any $i$,
where $\ord_{E_i}$
denotes the vanishing order of a holomorphic function on $\widetilde{X}$
along the divisor $E_i$.
The direct sum of the morphisms
\eqref{c-eq6.1} for all $i$ 
induces a morphism
\begin{equation}
\label{c-eq6.2}
\mathcal{M}\gp \to \bigoplus_{i=1}^l(a_i)_*\mathbb{Z}_{E_i},
\end{equation}
which fits in an exact sequence
\begin{equation}
\label{c-eq6.3}
0 \to \mathcal{O}^*_{\widetilde{X}} \to \mathcal{M}\gp
\to \bigoplus_{i=1}^l(a_i)_*\mathbb{Z}_{E_i}
\end{equation}
by definition.
\end{say}

The following is a key lemma for the construction of $(A_{\mathbb{Q}}, W)$.

\begin{lem}
\label{c-lem6.4}
We obtain the exact sequence
\begin{equation}
0 \to \mathcal{O}^*_{\widetilde{X}} \otimes_{\mathbb{Z}}\mathbb{Q}
\to \mathcal{M}\gp \otimes_{\mathbb{Z}}\mathbb{Q}
\to \bigoplus_{i=1}^l(a_i)_*\mathbb{Q}_{E_i}
\to 0
\end{equation}
by tensoring $\mathbb{Q}$ to \eqref{c-eq6.3}.
\end{lem}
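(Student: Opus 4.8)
The plan is to combine the flatness of $\mathbb{Q}$ over $\mathbb{Z}$ with an explicit local computation of the order map. Since $\mathbb{Q}$ is a flat $\mathbb{Z}$-module, applying $-\otimes_{\mathbb{Z}}\mathbb{Q}$ to the exact sequence \eqref{c-eq6.3} preserves exactness at $\mathcal{O}^*_{\widetilde{X}}\otimes_{\mathbb{Z}}\mathbb{Q}$ and at $\mathcal{M}\gp\otimes_{\mathbb{Z}}\mathbb{Q}$. Hence the only remaining point is to prove that the right-hand map
\[
\mathcal{M}\gp\otimes_{\mathbb{Z}}\mathbb{Q}
\longrightarrow
\bigoplus_{i=1}^l(a_i)_*\mathbb{Q}_{E_i}
\]
is surjective. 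At the integral level the map \eqref{c-eq6.2} need not be surjective: its cokernel is a torsion sheaf supported on the singular locus of the $V$-manifold $\widetilde{X}$, and this is exactly what passing to $\mathbb{Q}$-coefficients repairs.

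Surjectivity may be checked on stalks. Fix $x\in\widetilde{X}$ and adopt the local description recalled in \ref{c-say6.2}: after shrinking, $\widetilde{X}\simeq\Delta^n/H$ with coordinates $y_1,\dots,y_n$, and $\widetilde{f}^*t=y_1\cdots y_k$, so the irreducible components of $E$ passing through $x$ are precisely the images of $\{y_i=0\}$ for $1\le i\le k$. Using the local irreducibility of the $E_i$ established in \ref{c-say6.2}, the stalk of $\bigoplus_i(a_i)_*\mathbb{Z}_{E_i}$ at $x$ is $\bigoplus_{i=1}^k\mathbb{Z}$, the $i$-th summand being generated by the constant section $1$ on $E_i$.

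The key step is to produce, for each $i$, a local section of $\mathcal{M}\gp$ whose image is a nonzero integral multiple of this $i$-th generator. For this I would use the monomial $y_i^{c_i}$. Since $G=\bigoplus_j\mathbb{Z}/c_j\mathbb{Z}$ acts on $y_i$ through the character $\exp(2\pi\sqrt{-1}\,b_i/c_i)$, the function $y_i^{c_i}$ is $G$-invariant, hence $H$-invariant because $H\subset G$, and therefore descends to a holomorphic function on the chart. It is invertible away from the image of $\{y_i=0\}\subset E$, so it defines a section of $\mathcal{M}\subset\mathcal{M}\gp$. By construction $\ord_{E_j}(y_i^{c_i})=0$ for $j\ne i$ while $\ord_{E_i}(y_i^{c_i})=n_i$ for some integer $n_i>0$; thus the image of $y_i^{c_i}$ under \eqref{c-eq6.2} is $n_i$ times the $i$-th generator. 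After tensoring with $\mathbb{Q}$ these images span $\bigoplus_{i=1}^k\mathbb{Q}$, which gives surjectivity on the stalk at $x$, and hence globally.

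I expect the main obstacle to be conceptual rather than computational: it is precisely the failure of \eqref{c-eq6.2} to be surjective at the orbifold points of $\widetilde{X}$, where $E_i$ is only $\mathbb{Q}$-Cartier and no $H$-invariant function cuts it out with order exactly $1$. The content of the lemma is that this defect is torsion, and the concrete realization through the $G$-invariant monomials $y_i^{c_i}$ shows it is annihilated by $-\otimes_{\mathbb{Z}}\mathbb{Q}$. The verification that $y_i^{c_i}$ is $H$-invariant and has the stated vanishing orders is immediate from the explicit $G$-action recalled in \ref{c-say6.2}, so no further calculation is needed.
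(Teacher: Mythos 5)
Your proposal is correct and follows essentially the same route as the paper: the paper's proof also works in the local chart $\widetilde{U}\simeq\Delta^n/H$ of \ref{c-say6.2}, uses the $H$-invariance of the monomials $y_i^{c_i}$ to produce sections of $\mathcal{M}\subset\mathcal{M}\gp$ supported along the $E_i$, and observes that their images under \eqref{c-eq6.2} are the positive multiples $c_i$ of the standard generators, which become generators after tensoring with $\mathbb{Q}$. The only cosmetic difference is that the paper computes the vanishing order to be exactly $c_i$, whereas you leave it as an unspecified positive integer $n_i$, which suffices.
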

\begin{proof}
We may work in the local situation
described in \ref{c-say6.2}.
Since $y_i^{c_i}$ is $H$-invariant,
it gives us a holomorphic function on $\widetilde{U}$
for $i=1, \dots, k$.
We may assume that $E_i=\Supp \{y^{c_i}=0\}$ for $1 \le i \le k$
and $E_i \cap \widetilde{U}=\emptyset$ for $k+1 \le i \le l$
by changing the indices.
Because $E_i$ is the zero set of
$\widetilde{f}^*t=y_1 \cdots y_k$
on $\widetilde{U} \setminus \bigcup_{j \not=i}(E_j \cap \widetilde{U})$,
the image of $y_i^{c_i} \in \mathcal{M} \subset \mathcal{M}\gp$
by the morphism \eqref{c-eq6.2} is
$(0, \dots, 0, c_i, 0 \dots, 0) \in \bigoplus_{j=1}^l(a_j)_*\mathbb{Z}_{E_j}$,
where $c_i$ is on the $i$-th entry.
Thus we obtain the conclusion.
\end{proof}

\begin{say}\label{c-say6.5}
We briefly recall
the constructions
of the Koszul complexes and related objects. 
For the detail,
see \cite[Sections 1 and 2]{Fujisawa2}
and \cite[\S 4.4 and \S 11.2.6]{Peters-SteenbrinkMHS}
(cf.~\cite{Illusie}, \cite{Steenbrink3} and so on).

A morphism of abelian sheaves
$\mathbf{e} \colon \mathcal{O}_{\widetilde{X}} \to \mathcal{M}\gp$
is defined as the composite of the exponential map
\begin{equation}
\mathcal{O}_{\widetilde{X}} \ni a
\mapsto e^{2\pi\sqrt{-1}a} \in \mathcal{O}^*_{\widetilde{X}}
\end{equation}
and the inclusion
$\mathcal{O}^*_{\widetilde{X}} \hookrightarrow \mathcal{M}\gp$. 
Then the morphism
$\mathbf{e} \otimes \id
\colon \mathcal{O}_{\widetilde{X}}
\simeq \mathcal{O}_{\widetilde{X}} \otimes \mathbb{Q}
\to \mathcal{M}\gp \otimes \mathbb{Q}$
is obtained.
Note that $1 \in \Gamma(X, \mathcal{O}_{\widetilde{X}})$
is contained in the kernel of $\mathbf{e} \otimes \id$.
We set
$\mathcal{M}\gp_{\mathbb{Q}}=\mathcal{M}\gp \otimes \mathbb{Q}$
for short.
For $p \in \mathbb{Z}$,
a $\mathbb{Q}$-sheaf $\kos(\mathcal{M})^p$
on $\widetilde{X}$ is defined by
\begin{equation}
\kos(\mathcal{M})^p
:=\varinjlim_{n}\sym_{\mathbb{Q}}^{n-p}(\mathcal{O}_{\widetilde{X}})
\otimes_{\mathbb{Q}} \bigwedge^p\mathcal{M}\gp_\mathbb{Q},
\end{equation}
where $\sym_{\mathbb{Q}}^{n-p}(\mathcal{O}_{\widetilde{X}})$
denotes the symmetric tensor product of degree $n-p$
of $\mathcal{O}_{\widetilde{X}}$ over $\mathbb{Q}$,
and where the inductive limit is taken over the inductive system
defined by the morphisms
\begin{equation}
\label{eq:16}
\sym_{\mathbb{Q}}^{n-p}(\mathcal{O}_{\widetilde{X}})
\otimes_{\mathbb{Q}} \bigwedge^p\mathcal{M}\gp_\mathbb{Q}
\ni a \otimes b \mapsto
(1 \cdot a) \otimes b
\in \sym_{\mathbb{Q}}^{n+1-p}(\mathcal{O}_{\widetilde{X}})
\otimes_{\mathbb{Q}} \bigwedge^p\mathcal{M}\gp_\mathbb{Q}
\end{equation}
for all $n \ge p$.
A morphism of $\mathbb{Q}$-sheaves
\begin{equation}
\sym_{\mathbb{Q}}^{n-p}(\mathcal{O}_{\widetilde{X}})
\otimes_{\mathbb{Q}} \bigwedge^p\mathcal{M}\gp_\mathbb{Q}
\to
\sym_{\mathbb{Q}}^{n-p-1}(\mathcal{O}_{\widetilde{X}})
\otimes_{\mathbb{Q}} \bigwedge^{p+1}\mathcal{M}\gp_\mathbb{Q}
\end{equation}
is defined by
\begin{equation}
\label{eq:17}
a_1^{n_1} \cdots a_k^{n_k} \otimes b
\mapsto
\sum_{j=1}^{k}n_ja_1^{n_1} \cdots a_j^{n_j-1} \cdots a_k^{n_k}
\otimes (\mathbf{e} \otimes \id)(a_j) \wedge b
\end{equation}
where $n_1, \dots, n_k$ are positive integers
with $n_1+\cdots+n_k=n-p$.
These morphisms form a morphism of inductive systems 
defined by \eqref{eq:16} for $p$ and $p+1$
because $1 \in \Gamma(X, \mathcal{O}_{\widetilde{X}})$
is contained in the kernel of $\mathbf{e} \otimes \id$.
Then a morphism of $\mathbb{Q}$-sheaves
\begin{equation}
d \colon \kos(\mathcal{M})^p \to \kos(\mathcal{M})^{p+1}
\end{equation}
is induced.
We can easily see the equality $d^2=0$.
Thus we obtain a complex of $\mathbb{Q}$-sheaves
$\kos(\mathcal{M})$ on $\widetilde{X}$.
Replacing $\mathcal{M}\gp$ by $\mathcal{O}^*_{\widetilde{X}}$,
we obtain a complex of $\mathbb{Q}$-sheaves
$\kos(\mathcal{O}^*_{\widetilde{X}})$.

We set
\begin{equation}
W_m(\sym_{\mathbb{Q}}^{n-p}(\mathcal{O}_{\widetilde{X}})
\otimes_{\mathbb{Q}} \bigwedge^p \mathcal{M}\gp_{\mathbb{Q}})
=\sym_{\mathbb{Q}}^{n-p}(\mathcal{O}_{\widetilde{X}})
\otimes_{\mathbb{Q}}
\bigwedge^{p-m}(\mathcal{O}^*_{\widetilde{X}} \otimes \mathbb{Q})
\otimes_{\mathbb{Q}}
\bigwedge^m \mathcal{M}\gp_{\mathbb{Q}},
\end{equation}
which is a $\mathbb{Q}$-subsheaf of
$\sym_{\mathbb{Q}}^{n-p}(\mathcal{O}_{\widetilde{X}})
\otimes_{\mathbb{Q}} \bigwedge^p \mathcal{M}\gp_{\mathbb{Q}}$
for every $m \in \mathbb{Z}$.
Since the morphism \eqref{eq:16} trivially preserves
$W_m$ on the both sides,
a subsheaf $W_m\kos(\mathcal{M})^p$ of $\kos(\mathcal{M})^p$
is obtained by
\begin{equation}
W_m\kos(\mathcal{M})^p
=\varinjlim_{n}W_m(\sym_{\mathbb{Q}}^{n-p}(\mathcal{O}_{\widetilde{X}})
\otimes_{\mathbb{Q}} \bigwedge^p \mathcal{M}\gp_{\mathbb{Q}})
\end{equation}
for every $m$.
It can be easily checked
that they define a finite increasing filtration $W$
on the complex $\kos(\mathcal{M})$.

The singular locus of $\widetilde{X}$
is denoted by $\sing(\widetilde{X})$
and the smooth locus $\widetilde{X}\sm$
is defined by $\widetilde{X}\sm=\widetilde{X} \setminus \sing(\widetilde{X})$.
Note that the restriction $E \cap \widetilde{X}\sm$
of $E$ to $\widetilde{X}\sm$
is a simple normal crossing divisor on $\widetilde{X}\sm$.
Then we have a morphism of monoid sheaves
\begin{equation}
\dlog \colon
\mathcal{M}|_{\widetilde{X}\sm}
\to \Omega^1_{\widetilde{X}\sm}(\log E \cap \widetilde{X}\sm)
\end{equation}
defined by
$\dlog (a):=a^{-1}da$ for
$a \in \mathcal{M}|_{\widetilde{X}\sm}
\subset \mathcal{O}_{\widetilde{X}\sm}$.
Thus we obtain a morphism
\begin{equation}
\mathcal{M}
\to \widetilde{\Omega}^1_{\widetilde{X}}(\log E)
\end{equation}
denoted by the same letter $\dlog$ as above.
This morphism induces a morphism of $\mathbb{Q}$-sheaves
\begin{equation}
\bigwedge^p\mathcal{M}\gp_{\mathbb{Q}}
\to
\widetilde{\Omega}^p_{\widetilde{X}}(\log E)
\end{equation}
denoted by $\bigwedge^p \dlog$ for every $p$.
A morphism of $\mathbb{Q}$-sheaves
\begin{equation}
\sym^{n-p}_{\mathbb{Q}}(\mathcal{O}_{\widetilde{X}})
\otimes_{\mathbb{Q}} \bigwedge^p\mathcal{M}\gp_{\mathbb{Q}}
\to
\widetilde{\Omega}^p_{\widetilde{X}}(\log E),
\end{equation}
defined by
\begin{equation}
a_1^{n_1} \cdots a_k^{n_k} \otimes b
\mapsto
(2\pi\sqrt{-1})^{-p}a_1^{n_1} \cdots a_k^{n_k}\bigwedge^p\dlog (b)
\end{equation}
for positive integers $n_1, \dots, n_k$ with $n_1+\dots+n_k=n-p$,
is compatible with the morphisms \eqref{eq:16} and \eqref{eq:17}.
Therefore we have a morphism of complexes of $\mathbb{Q}$-sheaves
\begin{equation}
\kos(\mathcal{M}) \to \widetilde{\Omega}_{\widetilde{X}}(\log E),
\end{equation}
which is denoted by $\psi$ as in \cite[(2.4)]{Fujisawa2}.
It can be easily seen that
the morphism $\psi$ preserves the filtration $W$
on the both sides.

The global section
$\widetilde{f}^*t \in \Gamma(\widetilde{X}, \mathcal{M})$
defines a morphism of complexes
\begin{equation}
(\widetilde{f}^*t) \wedge
\colon
\kos(\mathcal{M}) \to \kos(\mathcal{M})[1],
\end{equation}
which sends $W_m\kos(\mathcal{M})^n$
to $W_{m+1}\kos(\mathcal{M})^{n+1}$ 
as in \cite[(1.11) and (1.12)]{Fujisawa2}.
It can be easily checked
that the diagram
\begin{equation}
\label{c-eq6.4}
\xymatrix{ 
\kos(\mathcal{M}) \ar[d]_-{(\widetilde{f}^*t)\wedge} 
\ar[rr]^-{\psi}&& \widetilde{\Omega}_{\widetilde{X}}(\log E)
\ar[d]^-{\theta \wedge} \\
\kos(\mathcal{M})[1]
\ar[rr]_-{(2\pi\sqrt{-1})\psi}&&
\widetilde{\Omega}_{\widetilde{X}}(\log E)[1]
}
\end{equation} 
is commutative,
where
$\theta=\widetilde{f}^*(dt/t) \in \widetilde{\Omega}_{\widetilde{X}}^1(\log E)$.
\end{say}

For $\kos(\mathcal{M})$ and $\psi$ above,
we have the following lemmas.

\begin{lem}
\label{c-lem6.6}
In the situation above,
we set
\begin{equation}
E^{(m)}=\coprod_{1 \le i_1 < \dots < i_m \le l}E_{i_1} \cap \cdots \cap E_{i_m}
\end{equation}
for $m \in \mathbb{Z}_{> 0}$.
Moreover, we set $E^{(0)}=\widetilde{X}$.
The natural morphism $E^{(m)} \to \widetilde{X}$ is denoted
by $a_m$
for $m \in \mathbb{Z}_{\ge 0}$.
Then there exists a quasi-isomorphism
\begin{equation}
(a_m)_*\mathbb{Q}_{E^{(m)}}[-m]
\to \Gr_m^W\kos(\mathcal{M})
\end{equation}
for all $m \in \mathbb{Z}$.
\end{lem}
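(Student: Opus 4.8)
The plan is to read off $\Gr_m^W\kos(\mathcal M)$ directly from the short exact sequence
\[
0 \to \mathcal O^*_{\widetilde X}\otimes_{\mathbb Z}\mathbb Q \to \mathcal M\gp\otimes_{\mathbb Z}\mathbb Q \to \bigoplus_{i=1}^l(a_i)_*\mathbb Q_{E_i}\to 0
\]
supplied by Lemma \ref{c-lem6.4}. Write $\mathcal A=\mathcal O^*_{\widetilde X}\otimes\mathbb Q$, $\mathcal B=\mathcal M\gp_{\mathbb Q}$ and $\mathcal C=\bigoplus_{i=1}^l(a_i)_*\mathbb Q_{E_i}$, so that $0\to\mathcal A\to\mathcal B\to\mathcal C\to0$ is exact. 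Since all three are sheaves of $\mathbb Q$-vector spaces, this sequence is split at every stalk, and the standard filtration on exterior powers induced by the subsheaf $\mathcal A$ has canonical associated graded
\[
\Gr_m^W\!\Big(\sym_{\mathbb Q}^{n-p}(\mathcal O_{\widetilde X})\otimes_{\mathbb Q}\textstyle\bigwedge^p\mathcal B\Big)\simeq \sym_{\mathbb Q}^{n-p}(\mathcal O_{\widetilde X})\otimes_{\mathbb Q}\textstyle\bigwedge^{p-m}\mathcal A\otimes_{\mathbb Q}\bigwedge^m\mathcal C .
\]
The symmetric factor is untouched by $W$, and the differential \eqref{eq:17} wedges with an element of $\mathbf e(\mathcal O_{\widetilde X})\subset\mathcal A$, hence raises the number of $\mathcal A$-factors by one and so preserves $W$; on $\Gr_m^W$ it becomes the Koszul differential of $\kos(\mathcal O^*_{\widetilde X})$ tensored with $\id$ on $\bigwedge^m\mathcal C$. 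Passing to the inductive limit over $n$ and reindexing $n\mapsto n-m$ then identifies the whole complex as
\[
\Gr_m^W\kos(\mathcal M)\simeq \kos(\mathcal O^*_{\widetilde X})[-m]\otimes_{\mathbb Q}\textstyle\bigwedge^m\mathcal C .
\]

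Next I would identify the two tensor factors. Because the $E_i$ are locally irreducible by \ref{c-say6.2}, the tensor product over $\mathbb Q$ of $(a_{i_1})_*\mathbb Q_{E_{i_1}},\dots,(a_{i_m})_*\mathbb Q_{E_{i_m}}$ is the constant sheaf $\mathbb Q$ on $E_{i_1}\cap\cdots\cap E_{i_m}$, pushed forward to $\widetilde X$, so that $\bigwedge^m\mathcal C\simeq(a_m)_*\mathbb Q_{E^{(m)}}$. For the remaining factor I would invoke that the augmentation $\mathbb Q_{\widetilde X}\to\kos(\mathcal O^*_{\widetilde X})$ into degree zero is a quasi-isomorphism: this is the multiplicative Koszul resolution attached to the exponential sequence $0\to\mathbb Q_{\widetilde X}\to\mathcal O_{\widetilde X}\xrightarrow{\mathbf e\otimes\id}\mathcal A\to0$, which is part of the machinery recalled in \ref{c-say6.5} (cf.~\cite[Sections 1 and 2]{Fujisawa2}, \cite[\S 4.4]{Peters-SteenbrinkMHS}).

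Finally I would tensor this augmentation with $\bigwedge^m\mathcal C=(a_m)_*\mathbb Q_{E^{(m)}}$ over $\mathbb Q$ and shift by $[-m]$. As $\mathbb Q$ is a field, $\otimes_{\mathbb Q}$ is exact on stalks, so it carries the acyclic mapping cone of the augmentation to an acyclic complex; hence the induced morphism
\[
(a_m)_*\mathbb Q_{E^{(m)}}[-m]\longrightarrow \kos(\mathcal O^*_{\widetilde X})[-m]\otimes_{\mathbb Q}\textstyle\bigwedge^m\mathcal C\simeq\Gr_m^W\kos(\mathcal M)
\]
is the desired quasi-isomorphism. The one point demanding genuine care is the acyclicity of $\kos(\mathcal O^*_{\widetilde X})$ itself, which rests on the \emph{local} surjectivity of $\exp\colon\mathcal O_{\widetilde X}\to\mathcal O^*_{\widetilde X}$ on the $V$-manifold $\widetilde X$, i.e.~on the local existence of holomorphic logarithms on the quotient charts $\Delta^n/H$ of \ref{c-say6.2}; once that is granted (the charts are contractible, so $H^1(\cdot,\mathbb Z)=0$), everything else is the formal exterior-algebra bookkeeping above. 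Thus the real content of the lemma is the verification that Fujisawa's construction survives the passage to the $V$-manifold setting of \cite{steenbrink2}.
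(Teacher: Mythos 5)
Your proof is correct and takes essentially the same route as the paper: the paper simply cites \cite[Proposition 1.10]{Fujisawa2} for the identification $\Gr_m^W\kos(\mathcal{M})\simeq\bigwedge^m\bigl(\mathcal{M}\gp\otimes\mathbb{Q}/\mathcal{O}^*_{\widetilde{X}}\otimes\mathbb{Q}\bigr)\otimes\kos(\mathcal{O}^*_{\widetilde{X}})[-m]$ and \cite[Corollary 1.15]{Fujisawa2} for the quasi-isomorphism $\mathbb{Q}_{\widetilde{X}}\to\kos(\mathcal{O}^*_{\widetilde{X}})$, and then concludes by Lemma \ref{c-lem6.4} --- precisely the three ingredients you assemble. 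The only difference is that you unpack Fujisawa's two statements (the filtration bookkeeping on exterior powers and the Koszul resolution attached to the exponential sequence) instead of citing them, which is harmless.
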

\begin{proof}
We have an isomorphism
\begin{equation}
\bigwedge^m
(\mathcal{M}\gp \otimes \mathbb{Q}
/\mathcal{O}_{\widetilde{X}}^* \otimes \mathbb{Q})
\otimes
\kos(\mathcal{O}_{\widetilde{X}}^*)[-m]
\simeq
\Gr_m^W\kos(\mathcal{M})
\end{equation}
by \cite[Proposition 1.10]{Fujisawa2},
and a quasi-isomorphism
$\mathbb{Q}_{\widetilde{X}} \to \kos(\mathcal{O}_{\widetilde{X}}^*)$
by \cite[Corollary 1.15]{Fujisawa2}.
Therefore we obtain the conclusion by Lemma \ref{c-lem6.4}.
\end{proof}

\begin{lem}
\label{c-lem6.7}
In the situation above,
we have the commutative diagram
\begin{equation}
\label{c-eq6.5}
\xymatrix{
(a_m)_*\mathbb{Q}_{E^{(m)}}[-m]\ar[d]
\ar[rrr]^-{(2\pi\sqrt{-1})^{-m}\iota[-m]}&&&
(a_m)_*\widetilde{\Omega}_{E^{(m)}}[-m]\ar[d]^-{\simeq} \\
\Gr_m^W\kos(\mathcal{M})
\ar[rrr]_-{\Gr_m^W\psi}
&&&\Gr_m^W\widetilde{\Omega}_{\widetilde{X}}(\log E)
}
\end{equation} 
where $\iota$ is the natural morphism
induced from the inclusion 
$\mathbb{Q}_{E^{(m)}} \to \mathcal{O}_{E^{(m)}}$,
the left vertical arrow is
the quasi-isomorphism in Lemma \ref{c-lem6.6},
and the right vertical arrow is the 
inverse of the residue isomorphism
in \cite[(1.18) Definition and (1.19) Lemma]{steenbrink2} 
$($see also \cite[3.5]{DeligneED}$)$.
In particular,
the morphism
\begin{equation}
\kos(\mathcal{M}) \otimes \mathbb{C}
\to \widetilde{\Omega}_{\widetilde{X}}(\log E)
\end{equation}
induced by $\psi$ is a filtered quasi-isomorphism
with respect to $W$ on the both sides.
\end{lem}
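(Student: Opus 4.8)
The plan is to obtain the asserted filtered quasi-isomorphism as a formal consequence of the commutative square \eqref{c-eq6.5}. Since $W$ is a finite increasing filtration on both $\kos(\mathcal{M})$ and $\widetilde{\Omega}_{\widetilde{X}}(\log E)$ and since $\psi$ preserves $W$, it suffices to prove that $\Gr_m^W(\psi\otimes\mathbb{C})$ is a quasi-isomorphism for every $m$; a filtered morphism which is a quasi-isomorphism on each graded piece of a finite filtration is automatically a filtered quasi-isomorphism. Thus the whole argument reduces to constructing the square \eqref{c-eq6.5}, checking its commutativity, and verifying that the remaining three arrows become quasi-isomorphisms after tensoring with $\mathbb{C}$.

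First I would assemble the two vertical arrows. The left one is the quasi-isomorphism of Lemma \ref{c-lem6.6}, built from the isomorphism $\bigwedge^m(\mathcal{M}\gp\otimes\mathbb{Q}/\mathcal{O}^*_{\widetilde{X}}\otimes\mathbb{Q})\otimes\kos(\mathcal{O}^*_{\widetilde{X}})[-m]\simeq\Gr_m^W\kos(\mathcal{M})$ of \cite[Proposition 1.10]{Fujisawa2}, the identification $(\mathcal{M}\gp\otimes\mathbb{Q})/(\mathcal{O}^*_{\widetilde{X}}\otimes\mathbb{Q})\simeq\bigoplus_i(a_i)_*\mathbb{Q}_{E_i}$ of Lemma \ref{c-lem6.4}, and the quasi-isomorphism $\mathbb{Q}_{\widetilde{X}}\to\kos(\mathcal{O}^*_{\widetilde{X}})$ of \cite[Corollary 1.15]{Fujisawa2}; the right one is the inverse of the residue isomorphism of \cite[(1.18) Definition and (1.19) Lemma]{steenbrink2}. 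Commutativity is a local statement, which I would check in the coordinates of \ref{c-say6.2}. Over $\widetilde{U}\simeq\Delta^n/H$ with $E_i\cap\widetilde{U}=\{y_i^{c_i}=0\}$ for $1\le i\le k$, the proof of Lemma \ref{c-lem6.4} shows that the generator $1\in\mathbb{Q}_{E^{(m)}}$ on the component $E_{i_1}\cap\cdots\cap E_{i_m}$ corresponds under the left arrow to the class of $(c_{i_1}\cdots c_{i_m})^{-1}\,y_{i_1}^{c_{i_1}}\wedge\cdots\wedge y_{i_m}^{c_{i_m}}$ in $\Gr_m^W\kos(\mathcal{M})$. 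Applying $\Gr_m^W\psi$ and using $\dlog(y_i^{c_i})=c_i\,dy_i/y_i$, this maps to $(2\pi\sqrt{-1})^{-m}\,(dy_{i_1}/y_{i_1})\wedge\cdots\wedge(dy_{i_m}/y_{i_m})$, whose residue is the constant $1$ on $E^{(m)}$; hence the square commutes with the factor $(2\pi\sqrt{-1})^{-m}$ exactly as stated. Tracking these $c_i$ and $2\pi\sqrt{-1}$ factors and matching the residue and $\dlog$ sign conventions is the crux of the argument, and I expect it to be the principal technical obstacle.

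Finally I would verify the quasi-isomorphism assertions after $-\otimes\mathbb{C}$. The left vertical arrow remains a quasi-isomorphism, and the right vertical arrow is already an isomorphism. Because $\widetilde{X}$ is a $V$-manifold, each stratum $E^{(m)}$ is again a $V$-manifold, and the inclusion $\mathbb{C}_{E^{(m)}}\to\widetilde{\Omega}_{E^{(m)}}$ of the constant sheaf into the holomorphic de Rham complex is a quasi-isomorphism by the Poincar\'e lemma for $V$-manifolds (see \cite[I.2.5]{Peters-SteenbrinkMHS}); the scalar $(2\pi\sqrt{-1})^{-m}$ does not affect this. Therefore the top arrow of \eqref{c-eq6.5} becomes a quasi-isomorphism after $-\otimes\mathbb{C}$, and commutativity forces $\Gr_m^W(\psi\otimes\mathbb{C})$ to be a quasi-isomorphism for every $m$. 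By the reduction in the first paragraph, the morphism $\psi\otimes\mathbb{C}\colon\kos(\mathcal{M})\otimes\mathbb{C}\to\widetilde{\Omega}_{\widetilde{X}}(\log E)$ is a filtered quasi-isomorphism with respect to $W$ on both sides, which is the desired conclusion.
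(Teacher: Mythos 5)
Your proposal is correct and follows essentially the same route as the paper: commutativity of \eqref{c-eq6.5} is verified by direct local computation from the definition of $\psi$ (your coordinate calculation, with the $c_i$ factors cancelling against the coefficients coming from Lemma \ref{c-lem6.4}, is exactly this), and the filtered quasi-isomorphism then follows from the holomorphic Poincar\'e lemma for $V$-manifolds applied to each $E^{(m)}$, which is precisely what the paper invokes as \cite[(1.9) Corollary]{steenbrink2} (your reference \cite[I.2.5]{Peters-SteenbrinkMHS} is the same content). The reduction to graded pieces of the finite filtration $W$ is implicit in the paper's proof, and your write-up simply makes it explicit.
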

\begin{proof}
The commutativity of the diagram \eqref{c-eq6.5}
can be checked by the direct computation
from the definition of $\psi$
(cf.~\cite[(2.4)]{Fujisawa2}).
Then the latter conclusion follows from
\cite[(1.9) Corollary]{steenbrink2}.
\end{proof}

Once we obtain these two lemmas,
it is more or less clear that the construction,
parallel to $A_{\mathbb{C}}$
in \cite[(4.14) and (4.17)]{steenbrink1}
and \cite[(2.8)]{steenbrink2},
works for $A_{\mathbb{Q}}$.

\begin{defn}\label{c-def6.8} 
In the situation \ref{c-say6.1},
a bifiltered complex of $\mathbb{C}$-sheaves
$(A_{\mathbb{C}},W,F)$ on $\widetilde{X}$
is defined by
\begin{gather}
A^n_{\mathbb{C}}
:=\bigoplus_{q \ge 0}
\widetilde{\Omega}^{n+1}_{\widetilde{X}}(\log E)/
W_q\widetilde{\Omega}^{n+1}_{\widetilde{X}}(\log E), \\
W_mA^n_{\mathbb{C}}
:=\bigoplus_{q \ge 0}
W_{m+2q+1}\widetilde{\Omega}^{n+1}_{\widetilde{X}}(\log E)/
W_q\widetilde{\Omega}^{n+1}_{\widetilde{X}}(\log E), \\
F^pA^n_{\mathbb{C}}
:=
\bigoplus_{0 \le q \le n-p}
\widetilde{\Omega}_{\widetilde{X}}^{n+1}(\log E)
/W_q\widetilde{\Omega}_{\widetilde{X}}^{n+1}(\log E)
\end{gather}
with the differential
$-d-\theta \wedge$,
where $d$ denotes the differential of the complex
$\widetilde{\Omega}_{\widetilde{X}}(\log E)$
as in \cite[(4.17)]{steenbrink1}. 
Similarly, a filtered complex of $\mathbb{Q}$-sheaves
$(A_{\mathbb{Q}}, W)$ on $\widetilde{X}$
is defined by
\begin{gather}
A^n_{\mathbb{Q}}
:=\bigoplus_{q \ge 0}\kos(\mathcal{M})^{n+1}/W_q\kos(\mathcal{M})^{n+1} \\
W_mA^n_{\mathbb{Q}}
:=\bigoplus_{q \ge 0}W_{m+2q+1}\kos(\mathcal{M})^{n+1}/W_q\kos(\mathcal{M})^{n+1}
\end{gather}
with the differential
$-d-(\widetilde{f}^*t) \wedge$,
where $d$ denotes the differential of the complex $\kos(\mathcal{M})$.
The direct sum of the morphisms of $\mathbb{Q}$-sheaves
\begin{equation}
(2\pi\sqrt{-1})^{q+1}\psi
\colon
\kos(\mathcal{M})^{n+1}/W_q\kos(\mathcal{M})^{n+1}
\to
\widetilde{\Omega}_{\widetilde{X}}^{n+1}(\log E)
/W_q\widetilde{\Omega}_{\widetilde{X}}^{n+1}(\log E)
\end{equation}
gives us a morphism of $\mathbb{Q}$-sheaves
\begin{equation}
A^n_{\mathbb{Q}}
=\bigoplus_{q \ge 0}\kos(\mathcal{M})^{n+1}/W_q\kos(\mathcal{M})^{n+1}
\to
\bigoplus_{q \ge 0}
\widetilde{\Omega}_{\widetilde{X}}^{n+1}(\log E)
/W_q\widetilde{\Omega}_{\widetilde{X}}^{n+1}(\log E)
=A^n_{\mathbb{C}}
\end{equation}
which is compatible with the differentials
by the commutativity of the diagram \eqref{c-eq6.4}.
Thus we obtain a morphism of filtered complexes of $\mathbb{Q}$-sheaves
$\alpha \colon (A_{\mathbb{Q}},W) \to (A_{\mathbb{C}},W)$. 
Note that the supports of $A^n_{\mathbb{C}}$ and $A^n_{\mathbb{Q}}$
are contained in $E$ for every $n$.
Therefore they are the (bi)filtered complexes on $E$. 
\end{defn}

\begin{thm}[{cf.~\cite[(2.8)]{steenbrink2}}]
\label{c-thm6.9}
Let $f \colon X \to \Delta$ be as in \ref{c-say6.1}.
If we assume that $X$ is K\"ahler,
then $((A_{\mathbb{Q}}, W), (A_{\mathbb{C}}, W, F), \alpha)$
is a cohomological $\mathbb{Q}$-mixed Hodge complex on $E$.
\end{thm}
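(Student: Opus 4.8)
The plan is to verify directly the axioms of a cohomological $\mathbb{Q}$-mixed Hodge complex (in the $V$-manifold formulation of \cite[\S 3, \S 4]{Peters-SteenbrinkMHS}): namely that $\alpha \colon (A_{\mathbb{Q}},W)\otimes\mathbb{C} \to (A_{\mathbb{C}},W)$ is a filtered quasi-isomorphism, and that for every $m$ the triple $(\Gr_m^W A_{\mathbb{Q}}, (\Gr_m^W A_{\mathbb{C}},F), \Gr_m^W\alpha)$ is a cohomological $\mathbb{Q}$-Hodge complex of weight $m$ on $E$. The complex side $(A_{\mathbb{C}},W,F)$ is \emph{identical} to Steenbrink's construction in \cite[(4.14), (4.17)]{steenbrink1} and \cite[(2.8)]{steenbrink2}, and the fact that its graded pieces carry the requisite pure Hodge structures is Steenbrink's theorem; the only point at which the hypotheses differ is that $X$ is merely K\"ahler rather than projective, which is covered by the results on almost K\"ahler $V$-manifolds in \cite[I.2.5]{Peters-SteenbrinkMHS}, exactly as recorded in Remark \ref{c-rem3.4}. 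Thus the genuinely new task is to supply the rational structure $(A_{\mathbb{Q}},W)$ and to check its compatibility with $(A_{\mathbb{C}},W,F)$.

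First I would establish that $\alpha\otimes\mathbb{C}$ is a filtered quasi-isomorphism. By the construction in Definition \ref{c-def6.8}, both $A_{\mathbb{Q}}$ and $A_{\mathbb{C}}$ are obtained from $\kos(\mathcal{M})$ and $\widetilde{\Omega}_{\widetilde{X}}(\log E)$ respectively by the \emph{same} functorial recipe $A^n=\bigoplus_{q\ge 0}(-)^{n+1}/W_q(-)^{n+1}$, with weight filtration $W_mA^n=\bigoplus_{q\ge 0}W_{m+2q+1}/W_q$, and $\alpha$ is induced termwise by $(2\pi\sqrt{-1})^{q+1}\psi$. Since Lemma \ref{c-lem6.7} asserts that $\kos(\mathcal{M})\otimes\mathbb{C}\to\widetilde{\Omega}_{\widetilde{X}}(\log E)$ is a filtered quasi-isomorphism for $W$, I would pass to $\Gr^W$ and check the claim on each graded piece, where it reduces directly to the graded statement of Lemma \ref{c-lem6.7}, the scalar factors being invertible.

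Next I would identify the graded pieces and their Hodge structures. Combining the weight-filtration recipe with Lemma \ref{c-lem6.6}, one computes that $\Gr_m^W A_{\mathbb{Q}}\simeq\bigoplus_{q\ge 0}\Gr_{m+2q+1}^W\kos(\mathcal{M})$ is quasi-isomorphic to a finite direct sum of the form
\[
\Gr_m^W A_{\mathbb{Q}}\simeq\bigoplus_{q\ge 0}(a_{m+2q+1})_*\mathbb{Q}_{E^{(m+2q+1)}}(-q)[-m-2q]
\]
(summands with $m+2q+1<0$ being zero, and the precise Tate twist and shift being part of the bookkeeping below) of Tate-twisted shifted constant sheaves on the strata $E^{(r)}$. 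Here each $E^{(r)}$ is a disjoint union of compact K\"ahler $V$-manifolds, so $H^\bullet(E^{(r)},\mathbb{Q})$ carries a polarizable pure Hodge structure, again by \cite[I.2.5]{Peters-SteenbrinkMHS}. The residue computation of $\Gr_m^W\widetilde{\Omega}_{\widetilde{X}}(\log E)$ gives the matching description of $\Gr_m^W A_{\mathbb{C}}$, and the commutative diagram \eqref{c-eq6.5} in Lemma \ref{c-lem6.7} shows precisely that $\Gr_m^W\alpha$ carries the rational constant-sheaf structure to the corresponding summand of the de Rham complex, the Tate twist accounting for the factor $(2\pi\sqrt{-1})^{-m}$. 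Hence each $(\Gr_m^W A_{\mathbb{Q}},(\Gr_m^W A_{\mathbb{C}},F),\Gr_m^W\alpha)$ is a cohomological $\mathbb{Q}$-Hodge complex of weight $m$.

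The main obstacle I anticipate is not a single hard estimate but the careful bookkeeping of index shifts and Tate twists in the last step: one must track the powers of $2\pi\sqrt{-1}$ introduced in the definition of $\alpha$ and in diagram \eqref{c-eq6.5} so that the comparison isomorphism genuinely respects the $\mathbb{Q}$-lattice, and one must confirm that the differentials $-d-(\widetilde{f}^*t)\wedge$ and $-d-\theta\wedge$ correspond under $\alpha$, which is guaranteed by the commutativity of \eqref{c-eq6.4}. All the deeper Hodge-theoretic content, namely the degeneration of the Hodge-to-de Rham spectral sequence on the strata and polarizability, is inherited from the projective case of \cite{steenbrink1} and \cite{steenbrink2} through the K\"ahler $V$-manifold results of \cite[I.2.5]{Peters-SteenbrinkMHS}, so the essential new input is exactly the rational comparison packaged in Lemmas \ref{c-lem6.6} and \ref{c-lem6.7}.
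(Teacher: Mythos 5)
Your proposal takes essentially the same route as the paper: both identify the graded pieces $(\Gr_m^W A_{\mathbb{Q}}, (\Gr_m^W A_{\mathbb{C}}, F), \Gr_m^W\alpha)$ via Lemmas \ref{c-lem6.6} and \ref{c-lem6.7} with direct images under the finite morphisms $a_{m+2q+1}$ of Tate-twisted, shifted constant sheaves paired with the de Rham complexes of the compact K\"ahler $V$-manifolds $E^{(m+2q+1)}$, and then conclude by the almost K\"ahler $V$-manifold results of \cite[I.2.5 and Theorem 2.43]{Peters-SteenbrinkMHS}. Two bookkeeping corrections to your identification: the summand indexed by $q$ carries the twist $\mathbb{Q}(-m-q)$, not $\mathbb{Q}(-q)$ (the factor $(2\pi\sqrt{-1})^{q+1}$ in the definition of $\alpha$ combines with $(2\pi\sqrt{-1})^{-(m+2q+1)}$ from the residue isomorphism in \eqref{c-eq6.5}), and the sum runs over $q \ge \max(0,-m)$, since the summand at index $q$ vanishes inside $\Gr_m^W A_{\mathbb{Q}}$ exactly when $m+q<0$, which is not the same condition as $m+2q+1<0$.
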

\begin{proof}
By Lemmas \ref{c-lem6.6} and \ref{c-lem6.7},
$(\Gr_m^WA_{\mathbb{Q}}, (\Gr_m^WA_{\mathbb{C}}, F), \Gr_m^W\alpha)$
is identified with the direct sum of
the direct images of
\begin{equation}
(\mathbb{Q}_{E^{(m+2q+1)}}(-m-q)[-m-2q],
(\widetilde{\Omega}_{E^{(m+2q+1)}}[-m-2q], F[-m-q]))
\end{equation}
by the finite morphism $a_{m+2q+1}$
for all $q \ge \max(0, -m)$.
Since $\widetilde{X}$ is an almost K\"ahler $V$-manifold
as in \cite[I.2.5]{Peters-SteenbrinkMHS}
by the assumption for $X$ being K\"ahler,
we obtain the conclusion by Theorem 2.43
of \cite{Peters-SteenbrinkMHS}.
\end{proof}


\end{document}